\newtheorem{theorem}{Theorem}
\theoremstyle{plain}
\newtheorem{corollary}[theorem]{Corollary}
\newtheorem{definition}[theorem]{Definition}
\newtheorem{lemma}[theorem]{Lemma}
\newtheorem{proposition}[theorem]{Proposition}
\newtheorem{remark}[theorem]{Remark}
\newtheorem{remarks}[theorem]{Remarks}
\numberwithin{equation}{section} \numberwithin{theorem}{section}
\begin{document}
\title[Critical problems]{The Brezis-Nirenberg problem for the fractional Laplacian with mixed Dirichlet-Neumann boundary conditions}

\author[E. Colorado]{Eduardo Colorado}
\address{Eduardo Colorado\hfill\break\indent
Departamento de Matem\'aticas\hfill\break\indent Universidad Carlos
III de Madrid\hfill\break\indent Avda. Universidad 30, 28911
Legan\'es (Madrid)\hfill\break\indent Spain
\hfill\break\indent \&
\hfill\break\indent Instituto de Ciencias Matem\'aticas, ICMAT (CSIC-UAM-UC3M-UCM)
\hfill\break\indent C/Nicol\'as Cabrera 15, 28049 Madrid\hfill\break\indent Spain
} \email{eduardo.colorado@uc3m.es \& eduardo.colorado@icmat.es}

\author[A. Ortega]{Alejandro Ortega}
\address{Alejandro Ortega\hfill\break\indent
Departamento de Matem\'aticas\hfill\break\indent Universidad Carlos
III de Madrid\hfill\break\indent Avda. Universidad 30, 28911
Legan\'es (Madrid)\hfill\break\indent Spain}
\email{alortega@math.uc3m.es}

\date{\today}
\subjclass[2010]{35A15, 35J61, 49J35} %
\keywords{Fractional Laplacian, Mixed boundary conditions, Critical points, Critical problems, Semilinear problems.}%

\maketitle

{\begin{center}\it {\small  Dedicated to Ireneo Peral on the occasion of his retirement}\end{center}}

\

\begin{abstract}
In this work we study the existence of solutions to the critical
Brezis-Nirenberg problem when one deals with the spectral fractional
Laplace operator and mixed Dirichlet-Neumann boundary conditions,
i.e.,
$$
\left\{\begin{array}{rcl}
(-\Delta)^su & = & \lambda u+u^{2_s^*-1},\quad u>0\quad\mbox{in}\quad \Omega,\\
u & = & 0\quad\mbox{on}\quad \Sigma_{\mathcal{D}},\\
\displaystyle\frac{\partial u}{\partial \nu} & = & 0\quad\mbox{on}\quad \Sigma_{\mathcal{N}},
\end{array}\right.
$$
where  $\Omega\subset\mathbb{R}^N$ is a regular bounded domain,
$\frac{1}{2}<s<1$, $2_s^*$ is the critical fractional Sobolev exponent,
$0\le\lambda\in \mathbb{R}$, $\nu$ is the outwards normal to $\partial\Omega$,  $\Sigma_{\mathcal{D}}$,
$\Sigma_{\mathcal{N}}$ are smooth $(N-1)$-dimensional submanifolds
of $\partial\Omega$ such that
$\Sigma_{\mathcal{D}}\cup\Sigma_{\mathcal{N}}=\partial\Omega$,
$\Sigma_{\mathcal{D}}\cap\Sigma_{\mathcal{N}}=\emptyset$, and
$\Sigma_{\mathcal{D}}\cap\overline{\Sigma}_{\mathcal{N}}=\Gamma$ is
a smooth $(N-2)$-dimensional submanifold of $\partial\Omega$.
\end{abstract}

\section{Introduction}\label{sec:1}

For the last decades Dirichlet and Neumann boundary problems
associated with elliptic equations as
\begin{equation}\label{eliptica}
-\Delta u=f(x,u)
\end{equation}
have been widely investigated with different nonlinearities
$f(x,u)$. In contrast, mixed Dirichlet-Neumann boundary problems
have been much less investigated. Nevertheless, some important
results dealing with mixed Dirichlet-Neumann boundary problems
associated with \eqref{eliptica} have been proved over the years.
See \cite{ACP0,ACP,ColP,CP2,D,De1,De2,G,M,S} among others.

Problems associated with \eqref{eliptica}, substituting the operator
by the fractional Laplacian, have been extensively investigated in
the last years, with Dirichlet or Neumann boundary conditions (cf.,
e.g., \cite{BCdPS,BCSS,BrCdPS,CS,CT,DnPV,DRoV,MbS,SV,T}, among
others), but these fractional elliptic  problems, once again,
 have not been
so much investigated with mixed Dirichlet-Neumann boundary data, cf.
\cite{BM,CERW,DRoV}. Indeed, up to our knowledge, there are no
references for mixed Dirichlet-Neumann boundary problems involving
the spectral fractional Laplacian operator, which is the one we deal
with here. Precisely, we study the Brezis-Nirenberg problem, cf.
\cite{BN}, with the spectral fractional Laplacian operator
associated with mixed Dirichlet-Neumann boundary data. A turning
point in the history  of elliptic boundary problems associated with
\eqref{eliptica} was the seminal paper by Brezis and Nirenberg
\cite{BN}, where the critical power problem for the classical
Laplacian with a lower-order perturbation term and a Dirichlet
boundary condition was studied. For the pure critical problem it is
well known that there is no positive solution when the domain is
star-shaped due to a Pohozaev identity, cf. \cite{Po}. Nevertheless,
Brezis and Nirenberg proved, among other results, that there exists
a  positive solution when the perturbation is linear, analyzing more
carefully the case when the domain is a ball. Since then, there have
arisen more than one thousand papers citing \cite{BN}. In the
fractional setting, Brezis-Nirenberg problems have been also widely
investigated. For brevity we just cite some related works dealing
only with the fractional Laplacian, cf., e.g., \cite{BCdPS,T} for
the spectral fractional Laplacian defined in \eqref{spectral}, and
\cite{MbS,SV} for the fractional Laplacian defined by a singular
integral in \eqref{poisson}; both with Dirichlet boundary condition.
As we said above, there are no references dealing with problems
involving the spectral fractional Laplacian and mixed
Dirichlet-Neumann boundary conditions. As a consequence, the main
goal of this manuscript is twofold: one is to address for the very
first time problems involving spectral fractional Laplacian together
with mixed Dirichlet-Neumann boundary conditions, and second to
prove existence of a positive solution for the Brezis-Nirenberg
problem in this fractional setting with mixed boundary conditions.

\

The precise problem we study in this work is the following,
\begin{equation}\label{problema}
        \left\{
        \begin{tabular}{lcl}
        $(-\Delta)^su=\lambda u+u^{2_s^*-1}$ & &in $\Omega\subset \mathbb{R}^{N}$, \\
        $u>0$& & in $\Omega$,\\
        $B(u)=0$  & &on $\partial\Omega=\Sigma_{\mathcal{D}}\cup\Sigma_{\mathcal{N}}$, \\
        \end{tabular}
        \right.
        \tag{$P_\lambda$}
\end{equation}
where $\frac{1}{2}<s<1$, $\Omega$ is a smooth bounded domain of $\mathbb{R}^N$, $N>2s$, and mixed Dirichlet-Neumann boundary conditions
of the form
\begin{equation}\label{mixed}
B(u)=u\chi_{\Sigma_{\mathcal{D}}}+\frac{\partial u}{\partial
\nu}\chi_{\Sigma_{\mathcal{N}}},
\end{equation}
where $\nu$ is the outwards normal to $\partial\Omega$, $\chi_A$ stands for the characteristic function of a set $A$, $\Sigma_{\mathcal{D}}$ and
$\Sigma_{\mathcal{N}}$ are smooth $(N-1)$-dimensional submanifolds of $\partial\Omega$ such that
$\Sigma_{\mathcal{D}}$ is a closed submanifold of $\partial\Omega$, $\mathcal{H}_{N-1}(\Sigma_{\mathcal{D}})=\alpha>0$ where $\mathcal{H}_{N-1}$ is the
$(N-1)$-dimensional Hausdorff measure, $\Sigma_{\mathcal{D}}\cap\Sigma_{\mathcal{N}}=\emptyset$,
$\Sigma_{\mathcal{D}}\cup\Sigma_{\mathcal{N}}=\partial\Omega$ and $\Sigma_{\mathcal{D}}\cap\overline{\Sigma}_{\mathcal{N}}=\Gamma$ is a smooth
$(N-2)$-dimensional submanifold.

\

For the Dirichlet case ($\mathcal{H}_{N-1}(\Sigma_{\mathcal{N}})=0$) it can be seen (\cite{BrCdPS}) that using a generalized Pohozaev identity,
 problem $(P_{\lambda})$ has no solution for $\lambda=0$ and $\Omega$ a star-shaped domain.
As we will see, in the mixed boundary data case the situation is different.

The classical Pohozaev's identity was extended to the mixed Dirichlet-Neumann boundary data case, involving the classical Laplace operator by
Lions-Pacella-Tricarico \cite{LPT}. Following that ideas, we extend that result to our mixed fractional setting.
Precisely, as in \cite{ACP,ColP}, we will show that taking the mixed Dirichlet-Neumann boundary conditions, in an appropriate way, problem
$(P_\lambda)$ has a solution when $\lambda=0$, in contrast to the Dirichlet case. Thus, we can include the value $\lambda =0$ in the existence results. The
main result proved in this paper is the following.
\begin{theorem}\label{th_existencia}
Assume that  $\frac 12<s<1$ and $N\geq 4s$. Let $\lambda_{1,s}$ be
the first eigenvalue of the fractional operator $(-\Delta)^s$ with
mixed Dirichlet-Neumann boundary conditions \eqref{mixed}. Then
problem $(P_{\lambda})$
\begin{enumerate}
\item has no solution for $\lambda\geq\lambda_{1,s}$,
\item has at least one solution for $0<\lambda<\lambda_{1,s}$,
\item has at least one solution for $\lambda=0$ and $\mathcal{H}_{N-1}(\Sigma_{\mathcal{D}})$ small enough.
\end{enumerate}
\end{theorem}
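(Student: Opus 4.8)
The plan is to pass to the Caffarelli--Silvestre type extension for the spectral fractional Laplacian. To $u$ on $\Omega$ one associates its $s$-harmonic extension $w$ to the half-cylinder $\mathcal{C}_\Omega=\Omega\times(0,\infty)$, solving $-\,\mathrm{div}(y^{1-2s}\nabla w)=0$ in $\mathcal{C}_\Omega$, $w=0$ on $\Sigma_{\mathcal D}\times(0,\infty)$, $\partial_\nu w=0$ on $\Sigma_{\mathcal N}\times(0,\infty)$, $w(\cdot,0)=u$, so that $(-\Delta)^s u=-\kappa_s\lim_{y\to0^+}y^{1-2s}\partial_y w$ for a normalization constant $\kappa_s$. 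Problem $(P_\lambda)$ then becomes a \emph{local} boundary value problem on $\mathcal{C}_\Omega$, whose nontrivial positive solutions are the nonzero critical points of
$$
J_\lambda(w)=\frac{\kappa_s}{2}\int_{\mathcal{C}_\Omega}y^{1-2s}|\nabla w|^2\,dx\,dy-\frac{\lambda}{2}\int_\Omega |w(x,0)|^2\,dx-\frac{1}{2_s^*}\int_\Omega (w(x,0))_+^{2_s^*}\,dx
$$
on the Hilbert space $\mathcal{X}_0^s(\mathcal{C}_\Omega)$ of finite-weighted-energy functions vanishing on $\Sigma_{\mathcal D}\times(0,\infty)$; genuine positivity of the trace is recovered afterwards from the strong maximum principle for the extension. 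I would first set up the trace Sobolev inequality for $\mathcal{X}_0^s$, denote by $S(s)$ the best constant of its counterpart on $\mathbb R^{N+1}_+$ (attained by the explicit fractional bubble $W$), and record that $(-\Delta)^s$ with the mixed condition \eqref{mixed} is self-adjoint, with a simple first eigenvalue $\lambda_{1,s}$ and positive first eigenfunction $\varphi_{1,s}$.

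\emph{Nonexistence for $\lambda\ge\lambda_{1,s}$.} If $u$ solved $(P_\lambda)$, multiplying the equation for $u$ by $\varphi_{1,s}$, the eigenvalue equation by $u$, integrating over $\mathcal{C}_\Omega$ and subtracting, the mixed boundary terms cancel by self-adjointness and one is left with $(\lambda-\lambda_{1,s})\int_\Omega u\varphi_{1,s}+\int_\Omega\varphi_{1,s}\,u^{2_s^*-1}=0$; since $u,\varphi_{1,s}>0$ and $\lambda\ge\lambda_{1,s}$ the left-hand side is strictly positive, a contradiction.

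\emph{Existence for $0\le\lambda<\lambda_{1,s}$.} Here the quadratic part of $J_\lambda$ is positive definite, so $J_\lambda$ has a mountain pass geometry; let $c_\lambda$ be its mountain pass level. A concentration--compactness analysis on $\mathcal{C}_\Omega$ shows that Palais--Smale sequences are relatively compact below the first noncompact level, which for the mixed problem is $c^\star=\frac{s}{N}\big(\kappa_s S_{\mathcal D}\big)^{N/2s}$, where $S_{\mathcal D}\le 2^{-2s/N}S(s)$ is the best constant of the mixed Sobolev trace inequality: the factor $2^{-2s/N}$ appears because a bubble may concentrate at an interior point of $\Sigma_{\mathcal N}$, where after an even reflection across the Neumann boundary it becomes a full bubble carrying only half the energy, whereas no concentration is possible on $\Sigma_{\mathcal D}$. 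It then suffices to exhibit a path on which $J_\lambda$ stays below $c^\star$. I would use truncated rescaled half-bubbles $w_\varepsilon$, obtained by flattening $\partial\Omega$ near a well-chosen point $x_0\in\overline{\Sigma_{\mathcal N}}$ and cutting off $W_\varepsilon(x,y)=\varepsilon^{-(N-2s)/2}W((x-x_0)/\varepsilon,y/\varepsilon)$, and expand, for $N\ge 4s$,
$$
\max_{t\ge0}J_\lambda(t\,w_\varepsilon)=c^\star-C_1\,\lambda\,\varepsilon^{2s}(1+o(1))-C_2\,H(x_0)\,\varepsilon(1+o(1))+O(\varepsilon^{N-2s}),
$$
with $C_1,C_2>0$ and $H(x_0)$ the mean curvature of $\partial\Omega$ at $x_0$ (in the borderline dimension $N=4s$ the $\varepsilon^{2s}$ term gains a favorable $|\log\varepsilon|$). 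For $0<\lambda<\lambda_{1,s}$ one picks $x_0\in\Sigma_{\mathcal N}$ with $H(x_0)\ge0$ so that the geometric term does not obstruct and the $\lambda$-term forces $\max_t J_\lambda(tw_\varepsilon)<c^\star$; for $\lambda=0$ the $\lambda$-term is absent, but when $\mathcal{H}_{N-1}(\Sigma_{\mathcal D})$ is small $\Sigma_{\mathcal N}$ must contain a point $x_0$ of positive mean curvature (every bounded domain has such boundary points), and then the curvature term alone gives $\max_t J_0(tw_\varepsilon)<c^\star$. In all cases $c_\lambda<c^\star$, so a Palais--Smale sequence at level $c_\lambda$ converges to a nonzero $w$ with $J_\lambda'(w)=0$; testing against the negative part of $w$ shows $w\ge0$, and the strong maximum principle gives $w(\cdot,0)>0$ in $\Omega$, so $u=w(\cdot,0)$ solves $(P_\lambda)$.

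\emph{Main obstacle.} The crux is the asymptotic expansion of $\max_t J_\lambda(t w_\varepsilon)$ above. One must combine the interior Brezis--Nirenberg estimate (which is exactly what forces $N\ge4s$, so that $C_1\lambda\varepsilon^{2s}$ sits below the truncation error $\varepsilon^{N-2s}$ only in the allowed dimensions) with a delicate analysis of the boundary contributions created by the mixed condition and by the curvature of $\partial\Omega$ near $x_0$ and $\Gamma$: controlling the curvature term, estimating the error from cutting off the only polynomially decaying fractional bubble, and proving that the first noncompact level is precisely $c^\star=\frac{s}{N}(\kappa_s S_{\mathcal D})^{N/2s}$ with $S_{\mathcal D}<2^{-2s/N}S(s)$ attained. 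This strict inequality, together with the smallness of $\mathcal{H}_{N-1}(\Sigma_{\mathcal D})$ in part (3), is what ultimately makes the construction work.
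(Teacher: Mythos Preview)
Your treatment of parts (1) and (2) is essentially the paper's: the same eigenfunction test for nonexistence, the same extension framework, the same half-bubble centered on $\Sigma_{\mathcal N}$, and the same Brezis--Nirenberg expansion in $\varepsilon^{2s}$ (with the $\log$ in the borderline case $N=4s$). Whether one phrases this as minimizing the Rayleigh quotient $Q_\lambda$ or as a mountain pass for $J_\lambda$ is immaterial.

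For part (3) you take a genuinely different route. The paper does \emph{not} use mean curvature: it shows that as $\alpha=\mathcal H_{N-1}(\Sigma_{\mathcal D})\to 0$ one has $\lambda_{1,s}(\alpha)\to 0$, whence by H\"older $\widetilde S(\Sigma_{\mathcal D})\le |\Omega|^{2s/N}\lambda_{1,s}(\alpha)<2^{-2s/N}\kappa_s S(s,N)$ for small $\alpha$, and then a concentration--compactness argument (concentration can only occur on $\overline{\Sigma_{\mathcal N}}$, at cost at least $2^{-2s/N}\kappa_s S(s,N)$) forces $\widetilde S(\Sigma_{\mathcal D})$ to be attained. Your approach instead mimics Adimurthi--Mancini: pick $x_0\in\Sigma_{\mathcal N}$ with $H(x_0)>0$ and let the $O(\varepsilon)$ curvature correction push the half-bubble energy strictly below the half-Sobolev level. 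This would in fact yield a stronger conclusion (no smallness of $\Sigma_{\mathcal D}$ needed, only that $\Sigma_{\mathcal N}$ contain a point of positive mean curvature), but it requires the fractional boundary expansion with the curvature term, which is a separate and nontrivial computation that the paper neither proves nor needs.

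There is, however, a genuine inconsistency in your write-up that you must fix. You declare the compactness threshold to be $c^\star=\frac{s}{N}(\kappa_s S_{\mathcal D})^{N/2s}$ with $S_{\mathcal D}$ the \emph{mixed} Sobolev constant, and then claim $S_{\mathcal D}<2^{-2s/N}S(s)$ strictly. But your half-bubble expansion has leading term exactly $\frac{s}{N}\big(\kappa_s\cdot 2^{-2s/N}S(s)\big)^{N/2s}$, which under that strict inequality is \emph{above} your $c^\star$; an $O(\varepsilon)$ curvature correction cannot close an $O(1)$ gap, and the argument collapses. The resolution is that the relevant compactness threshold for your bubble comparison is the \emph{Neumann} one, $\widetilde S(\Sigma_{\mathcal N})=2^{-2s/N}\kappa_s S(s,N)$ (this is exactly the paper's Theorem on compactness below $\widetilde S(\Sigma_{\mathcal N})$, obtained by showing concentration points must lie in $\overline{\Sigma_{\mathcal N}}$). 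Once you replace $S_{\mathcal D}$ by $2^{-2s/N}S(s)$ in the definition of $c^\star$, your expansion and your curvature argument for (3) become consistent. Note also that since $2s>1$ here, the curvature term $\varepsilon$ dominates $\varepsilon^{2s}$; you should explain why it does not spoil part (2) when $H(x_0)$ could be negative (e.g.\ by choosing $x_0$ with $H(x_0)\ge 0$, as you do say, but this should be stated as a requirement, not a convenience).
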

Note that the range $\frac 12<s<1$ is natural for mixed boundary problems in our fractional setting, see Remark \ref{rem:rango_s}.

\

\textbf{Organization of the paper}. This manuscript have four more
sections. In Section \ref{sec:2} we establish the appropriate
functional setting for the study of problem $(P_{\lambda})$,
including the definition of an auxiliary problem introduced by
Caffarelli and Silvestre, \cite{CS}, that will help us to overcome
some difficulties that appear when we deal with the fractional
operator. Following the ideas of \cite{G} and \cite{ACP}, we
introduce two constants $\widetilde{S}(\Sigma_{\mathcal{N}})$ and
$\widetilde{S}(\Sigma_{\mathcal{D}})$ respectively, that play a
similar role to that of the Sobolev constant in the celebrated paper
of Brezis and Nirenberg, \cite{BN}. In Section \ref{sec:3} we study
some useful properties of that constants. Section \ref{sec:4} is
devoted to prove Theorem \ref{th_existencia} and it is divided into
two subsections. In Subsection \ref{subsec:4.1} we prove the
statements $(1)$-$(2)$ in Theorem \ref{th_existencia}. In
 Subsection \ref{subsec:4.2}, we
use the constant $\widetilde{S}(\Sigma_{\mathcal{D}})$ to study the
existence of solution to problem $(P_\lambda)$ when we move the
boundary conditions in an appropriate way to be specified. These
results allow us to prove statement $(3)$ in Theorem
\ref{th_existencia}. Finally, in the last section we prove a
non-existence result by means of a Pohozaev-type identity.

\section{Functional setting and definitions}\label{sec:2}
The definition of the fractional powers of the positive Laplace
operator $(-\Delta)$, in a bounded domain $\Omega$ with homogeneous
mixed Dirichlet-Neumann boundary data, is carried out via the
spectral decomposition using the powers of the eigenvalues of
$(-\Delta)$ with the same boundary condition. Let
$(\varphi_i,\lambda_i)$ be the eigenfunctions (normalized with
respect to the $L^2(\Omega)$-norm) and eigenvalues of $(-\Delta)$
with homogeneous mixed Dirichlet-Neumann boundary data, then
$(\varphi_i,\lambda_i^s)$ are the eigenfunctions and eigenvalues of
$(-\Delta)^s$ with the same boundary conditions. Thus the fractional
operator $(-\Delta)^s$ is well defined in the space of functions
that vanish on $\Sigma_{\mathcal{D}}$,
\begin{equation*}
H_{\Sigma_{\mathcal{D}}}^s(\Omega)=\left\{u=\sum_{j\ge 1}
a_j\varphi_j\in L^2(\Omega):\
\|u\|_{H_{\Sigma_{\mathcal{D}}^s}(\Omega)}^2= \sum_{j\ge 1}
a_j^2\lambda_j^s<\infty\right\}.
\end{equation*}

As a direct consequence of the previous definition we get

\begin{equation}\label{spectral}
(-\Delta)^su=\sum_{j\ge 1} a_j\lambda_j^s\varphi_j,
\end{equation}
as well as
\begin{equation}\label{norma1}
\|u\|_{H_{\Sigma_{\mathcal{D}}^s}(\Omega)}=\|(-\Delta)^{\frac{s}{2}}u\|_{L^2(\Omega)}.
\end{equation}
This definition of the fractional powers of the Laplace operator allows us to integrate by parts in the appropriate spaces. A
 natural definition of energy solution to problem $(P_{\lambda})$ is the following.

\begin{definition}
We say that $u\in H_{\Sigma_{\mathcal{D}}}^s(\Omega)$ is a solution of $(P_{\lambda})$ if
\begin{equation}\label{energy_sol}
\int_{\Omega}(-\Delta)^{s/2}u(-\Delta)^{s/2}\psi dx=\int_{\Omega}\left(\lambda u+u^{2_s^*-1}\right)\psi dx,\ \ \text{for all}\ \psi\in H_{\Sigma_{\mathcal{D}}}^s(\Omega).
\end{equation}
\end{definition}
The right-hand side of \eqref{energy_sol} is well defined because of the embedding $H_{\Sigma_{\mathcal{D}}}^s(\Omega)\hookrightarrow L^{2_s^*}(\Omega)$ while $u\in H_{\Sigma_{\mathcal{D}}}^s(\Omega)$ so $\lambda u+u^{2_s^*-1}\in L^{\frac{2N}{N+2s}}\hookrightarrow \left(H_{\Sigma_{\mathcal{D}}}^s(\Omega)\right)'$.
The energy functional associated with problem $(P_{\lambda})$ is
\begin{equation}\label{energy_functional}
I(u)=\frac{1}{2}\int_{\Omega}|(-\Delta)^{s/2}u|^2dx-\frac{\lambda}{2}\int_{\Omega}u^2dx-\frac{N-2s}{2N}\int_{\Omega}u^{\frac{2N}{N-2s}}dx.
\end{equation}
This functional is well defined in
$H_{\Sigma_{\mathcal{D}}}^s(\Omega)$ and critical points of $I$,
defined by \eqref{energy_functional}, correspond to solutions of
$(P_{\lambda})$.

\begin{remark}\label{rem:rango_s}
As it was proved in \cite{BSV}, for the range $0<s\le \frac 12$,
$H_0^s(\Omega)=H^s(\Omega)$, and for $\frac 12<s<1$,
$H_0^s(\Omega)\subsetneq H^s(\Omega)$. As a consequence,
$H_{\Sigma_{\mathcal{D}}}^s(\Omega)=H^s(\Omega)$ for $0<s\le \frac
12$. This is the reason why we work here with the fraction $\frac
12<s<1$, in which $H_{\Sigma_{\mathcal{D}}}^s(\Omega)\subsetneq
H^s(\Omega)$.
\end{remark}

In order to overcome some difficulties that appear along several proofs in the paper we use the ideas of Caffarelli and Silvestre, \cite{CS}, together
with those of \cite{BrCdPS} to give an equivalent definition of the operator $(-\Delta)^s$ defined in a bounded domain by means of an auxiliary problem.
Associated with the domain $\Omega$, we consider the cylinder $\mathcal{C}_{\Omega}=\Omega\times(0,\infty)\subset\mathbb{R}_+^{N+1}$.
We denote with $(x,y)$ points that belongs to $\mathcal{C}_{\Omega}$ and with $\partial_L\Omega=\partial\Omega\times(0,\infty)$ the lateral boundary of
the extension cylinder. Given a function $u\in H_{\Sigma_{\mathcal{D}}}^s(\Omega)$, we define its $s$-extension $w=E_{s}[u]$ to the cylinder
$\mathcal{C}_{\Omega}$ as the solution of the problem

\begin{equation}\label{extension}
        \left\{
        \begin{tabular}{lcl}
        $-div(y^{1-2s}\nabla w)=0$ & &in $\mathcal{C}_{\Omega}$, \\
        $B^*(w)=0$  & &on $\partial_L\mathcal{C}_{\Omega}$, \\
        $w(x,0)=u(x)$ & & in $\Omega\times\{y=0\}$,
        \end{tabular}
        \right.
\end{equation}
where
\begin{equation*}
B^*(w)=w\chi_{\Sigma_{\mathcal{D}}^*}+\frac{\partial w}{\partial \nu}\chi_{\Sigma_{\mathcal{N}}^*},
\end{equation*}
with $\Sigma_{\mathcal{D}}^*=\Sigma_{\mathcal{D}}\times(0,\infty)$ and $\Sigma_{\mathcal{N}}^*=\Sigma_{\mathcal{N}}\times(0,\infty)$. The extension function belongs to the space
\begin{equation*}
\mathcal{X}_{\Sigma_{\mathcal{D}}}^s(\mathcal{C}_{\Omega})=\overline{\mathcal{C}_{0}^{\infty}\big((\Omega\cup\Sigma_{\mathcal{N}})\times[0,\infty)\big)}^{\|\cdot\|_{\mathcal{X}_{\Sigma_{\mathcal{D}}}^s(\mathcal{C}_{\Omega})}},
\end{equation*}
equipped with the norm,
\begin{equation*}
\|z\|_{\mathcal{X}_{\Sigma_{\mathcal{D}}}^s(\mathcal{C}_{\Omega})}^2=\kappa_s\int_{\mathcal{C}_{\Omega}}y^{1-2s}|\nabla
z(x,y)|^2dxdy.
\end{equation*}
With that constant  $\kappa_s$, whose value can be consulted in \cite{BrCdPS}, the extension operator
between $H_{\Sigma_{\mathcal{D}}}^s(\Omega)$ and $\mathcal{X}_{\Sigma_{\mathcal{D}}}^s(\mathcal{C}_{\Omega})$ is an isometry, i.e.,
\begin{equation}\label{norma2}
\|E_s[\varphi]\|_{\mathcal{X}_{\Sigma_{\mathcal{D}}}^s(\mathcal{C}_{\Omega})}=\|\varphi\|_{H_{\Sigma_{\mathcal{D}}}^s(\Omega)},\
\text{for all}\ \varphi\in H_{\Sigma_{\mathcal{D}}}^s(\Omega).
\end{equation}
The key point of the extension function is that it is related to the fractional Laplacian of the original function through the formula
\begin{equation*}
\frac{\partial w}{\partial \nu^s}:= -\kappa_s \lim_{y\to 0^+} y^{1-2s}\frac{\partial w}{\partial y}=(-\Delta)^su(x).
\end{equation*}
In the case $\Omega=\mathbb{R}^N$ this formulation provides explicit expressions for both the fractional Laplacian and the $s$-extension in terms of the Riesz and the Poisson kernels, respectively. Namely,

\begin{equation}\label{poisson}
\begin{split}
w(x,y)&=P_y^s\ast u(x)=c_{N,s}y^{2s}\int_{\mathbb{R}^N}\frac{u(z)}{(|x-z|^2+y^2)^{\frac{N+2s}{2}}}dz\\
(-\Delta)^{s}u(x)&=d_{N,s}P.V.\int_{\mathbb{R}^N}\frac{u(x)-u(y)}{|x-y|^{N+2s}}.
\end{split}
\end{equation}
We refer to \cite{BrCdPS} in order to look up the values of the constants $\kappa_s$, $c_{N,s}$ and $d_{N,s}$ as well as the existent relation between
them, namely, $2s\kappa_sc_{N,s}=d_{N,s}$.
By the arguments above, we can reformulate our problem $(P_{\lambda})$ in terms of the extension problem as follows
\begin{equation}\label{extension_problem}
        \left\{
        \begin{tabular}{lcl}
        $-div(y^{1-2s}\nabla w)=0$ & &in $\mathcal{C}_{\Omega}$, \\
        $B^*(w)=0$  & &on $\partial_L\mathcal{C}_{\Omega}$, \\
        $\frac{\partial w}{\partial \nu^s}=\lambda w+w^{2_s^*-1}$ & & in $\Omega\times\{y=0\}$.
        \end{tabular}
        \right.
        \tag{$P_{\lambda}^*$}
\end{equation}
An energy solution of this problem is a function $w\in \mathcal{X}_{\Sigma_{\mathcal{D}}}^s(\mathcal{C}_{\Omega})$ such that
\begin{equation*}
\kappa_s\int_{\mathcal{C}_{\Omega}} y^{1-2s}\langle\nabla w,\nabla\varphi \rangle dxdy=\int_{\Omega} \left(\lambda w(x,0)+w^{2_s^*-1}(x,0)\right)\varphi(x,0)dx,
\end{equation*}
for all
$\varphi\in\mathcal{X}_{\Sigma_{\mathcal{D}}}^s(\mathcal{C}_{\Omega})$.
Given $w\in
\mathcal{X}_{\Sigma_{\mathcal{D}}}^s(\mathcal{C}_{\Omega})$ a
solution to problem $(P_{\lambda}^*)$ the function
$u(x)=Tr[w](x)=w(x,0)$ belongs to the space
$H_{\Sigma_{\mathcal{D}}}^s(\Omega)$ and it is an energy solution to
problem $(P_{\lambda})$ and vice versa, if $u\in
H_{\Sigma_{\mathcal{D}}}^s(\Omega)$ is a solution to $(P_{\lambda})$
then $w=E_s[u]\in
\mathcal{X}_{\Sigma_{\mathcal{D}}}^s(\mathcal{C}_{\Omega})$ is a
solution to $(P_{\lambda}^*)$ and, as a consequence, both
formulations are equivalent. Finally, the energy functional
associated with problem $(P_{\lambda}^*)$ is the following,
\begin{equation}\label{extensionfunctional}
J(w)=\frac{\kappa_s}{2}\int_{\mathcal{C}_{\Omega}}y^{1-2s}|\nabla w|^2dxdy-\frac{\lambda}{2}\int_{\Omega}w^2dx-\frac{N-2s}{2N}\int_{\Omega}w^{2_s^*}dx.
\end{equation}
Plainly, critical points of $J$ in
$\mathcal{X}_{\Sigma_{\mathcal{D}}}^s(\mathcal{C}_{\Omega})$
correspond to critical points of $I$ in
$H_{\Sigma_{\mathcal{D}}}^s(\Omega)$. Moreover, minima of $J$ also
correspond to minima of $I$. The proof of this fact is similar to
the one of the Dirichlet case, see \cite{BCdPS}.

\

Also, in the Dirichlet case, there is a trace inequality \cite[Theorem 4.4]{BrCdPS}, i.e.,
\begin{equation}\label{sobext}
\int_{\mathcal{C}_{\Omega}}y^{1-2s}|\nabla z(x,y)|^2dxdy\geq C\left(\int_{\Omega}|z(x,0)|^rdx\right)^{\frac{2}{r}},
\end{equation}
for $1\leq r\leq\frac{2N}{N-2s},\ N>2s, z\in \mathcal{X}_{0}^s(\mathcal{C}_{\Omega})$, that  turns out to be very useful and by the previous
comments this inequality is equivalent to the fractional Sobolev inequality,
\begin{equation}\label{sobolev}
\int_{\Omega}|(-\Delta)^{s/2}v|^2dx\geq C\left(\int_{\Omega}|v|^rdx\right)^{\frac{2}{r}},
\end{equation}
for $1\leq r\leq\frac{2N}{N-2s},\ N>2s, v\in H_{0}^s(\Omega)$.
\begin{remark} When $r=2_s^*$ the best constant in \eqref{sobext} will be denoted by $S(s,N)$. This constant is explicit and independent of the domain $\Omega$, and its exact value is given by the following expression,
\begin{equation*}
S(s,N)=\frac{2\pi^s\Gamma(1-s)\Gamma(\frac{N+2s}{2})(\Gamma(\frac{N}{2}))^{\frac{2s}{N}}}{\Gamma(s)\Gamma(\frac{N-2s}{2})(\Gamma(N))^s}.
\end{equation*}
Since it is not achieved in any bounded domain (see Remarks
\ref{rem:cte_Sobolev}-(1)) we have that
\begin{equation*}
\int_{\mathbb{R}_{+}^{N+1}}\!\!y^{1-2s}|\nabla z(x,y)|^2dxdy\geq S(s,N)\left(\int_{\mathbb{R}^N}|z(x,0)|^{\frac{2N}{N-2s}}dx\right)^{\frac{N-2s}{N}}\!,\  z\in \mathcal{X}^s(\mathbb{R}_{+}^{N+1}).
\end{equation*}
Indeed, in the whole space case the latter inequality is achieved
when $z= E_s[u]$ and
\begin{equation*}
u(x)=u_{\varepsilon}(x)=\frac{\varepsilon^{\frac{N-2s}{2}}}{(\varepsilon^2+|x|^2)^{\frac{N-2s}{2}}},
\end{equation*}
with arbitrary $\varepsilon>0$, cf., \cite{BrCdPS}. Finally, the
best constant in \eqref{sobolev} with $\Omega=\mathbb{R}^N$ is given
by $\kappa_s S(s,N)$.
\end{remark}
In the mixed boundary data case the situation is quite similar
thanks to the fact that we are considering a Dirichlet condition on
$\Sigma_{\mathcal{D}}$ with
$0<\mathcal{H}_{N-1}(\Sigma_{\mathcal{D}})<\mathcal{H}_{N-1}(\partial\Omega)$,
hence there exists a positive constant $C$ such that
\begin{equation*}
0<C:=\inf_{\substack{u\in H_{\Sigma_{\mathcal{D}}}^s(\Omega)\\
u\not\equiv
0}}\frac{\|u\|_{H_{\Sigma_{\mathcal{D}}}^s(\Omega)}}{\|u\|_{L^{2_s^*}(\Omega)}},
\end{equation*}
so in terms of the extension function,
\begin{equation}\label{poinc}
\left(\int_\Omega\varphi^{\frac{2N}{N-2s}}(x,0)dx\right)^{\frac{N-2s}{2N}}\leq C
\|\varphi(\cdot,0)\|_{H_{\Sigma_{\mathcal{D}}}^s(\Omega)}=C
\|E_s[\varphi(\cdot,0)]\|_{\mathcal{X}_{\Sigma_{\mathcal{D}}}^s(\mathcal{C}_{\Omega})}.
\end{equation}
As we will see below this constant $C$ plays an important role in the proof of Theorem \ref{th_existencia}.
With this Sobolev-type inequality in hands we can prove the following result.

\begin{lemma}\label{lem:traceineq}
 Assume that $\varphi \in \mathcal{X}_{\Sigma_{\mathcal{D}}}^s(\mathcal{C}_{\Omega})$, then there exists a constant $C>0$ such that,
\begin{equation}\label{eq:traceineq}
\left(\int_\Omega
\varphi^{\frac{2N}{N-2s}}(x,0)dx\right)^{1-\frac{2s}N}\leq C
\int_{\mathcal{C}_{\Omega}} y^{1-2s} |\nabla \varphi|^2 dxdy.
\end{equation}
\end{lemma}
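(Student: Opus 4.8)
The plan is to derive \eqref{eq:traceineq} directly from the isometry \eqref{norma2} together with the Sobolev-type inequality \eqref{poinc}. The point is that inequality \eqref{poinc} is stated for functions of the form $E_s[\varphi(\cdot,0)]$, i.e.\ for the $s$-harmonic extension of the trace, whereas the Lemma asks for the inequality for an \emph{arbitrary} $\varphi\in\mathcal{X}_{\Sigma_{\mathcal{D}}}^s(\mathcal{C}_{\Omega})$. So the first step is to observe that, given such a $\varphi$, its trace $u:=\varphi(\cdot,0)=Tr[\varphi]$ belongs to $H_{\Sigma_{\mathcal{D}}}^s(\Omega)$, and hence $E_s[u]$ is well defined. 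The second step is to recall the variational characterisation of the extension: among all functions in $\mathcal{X}_{\Sigma_{\mathcal{D}}}^s(\mathcal{C}_{\Omega})$ with the prescribed trace $u$ on $\Omega\times\{y=0\}$, the extension $E_s[u]$ is the unique minimiser of the Dirichlet-type energy $\int_{\mathcal{C}_{\Omega}}y^{1-2s}|\nabla z|^2\,dxdy$, since it solves the linear equation $-\mathrm{div}(y^{1-2s}\nabla w)=0$ with the mixed boundary condition $B^*(w)=0$ on $\partial_L\mathcal{C}_{\Omega}$. Consequently
\begin{equation*}
\int_{\mathcal{C}_{\Omega}}y^{1-2s}|\nabla E_s[u]|^2\,dxdy\ \le\ \int_{\mathcal{C}_{\Omega}}y^{1-2s}|\nabla \varphi|^2\,dxdy .
\end{equation*}

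With this minimality in hand the proof closes quickly. Using the isometry \eqref{norma2} and \eqref{poinc} applied to $\varphi$ replaced by $E_s[u]$ (whose trace is again $u$), one gets
\begin{equation*}
\left(\int_\Omega u^{\frac{2N}{N-2s}}\,dx\right)^{\frac{N-2s}{2N}}
\le C\,\|E_s[u]\|_{\mathcal{X}_{\Sigma_{\mathcal{D}}}^s(\mathcal{C}_{\Omega})}
= C\Big(\kappa_s\int_{\mathcal{C}_{\Omega}}y^{1-2s}|\nabla E_s[u]|^2\,dxdy\Big)^{1/2}
\le C\Big(\kappa_s\int_{\mathcal{C}_{\Omega}}y^{1-2s}|\nabla \varphi|^2\,dxdy\Big)^{1/2}.
\end{equation*}
Squaring both sides (and noting that $1-\tfrac{2s}{N}=\tfrac{N-2s}{N}$, so the exponent $1-\tfrac{2s}N$ on the left of \eqref{eq:traceineq} is exactly twice the exponent $\tfrac{N-2s}{2N}$ above) and absorbing $\kappa_s$ and the square of the old constant into a new constant $C>0$ yields precisely \eqref{eq:traceineq}. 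I would also remark that the trace operator $Tr:\mathcal{X}_{\Sigma_{\mathcal{D}}}^s(\mathcal{C}_{\Omega})\to H_{\Sigma_{\mathcal{D}}}^s(\Omega)$ is bounded (this is implicit in the setup via \eqref{norma2} and the density definition of $\mathcal{X}_{\Sigma_{\mathcal{D}}}^s$), which is what guarantees $u\in H_{\Sigma_{\mathcal{D}}}^s(\Omega)$ in the first place; alternatively one can argue by density, proving the inequality first for $\varphi\in\mathcal{C}_0^\infty\big((\Omega\cup\Sigma_{\mathcal{N}})\times[0,\infty)\big)$ and then passing to the limit, since both sides are continuous with respect to the $\mathcal{X}_{\Sigma_{\mathcal{D}}}^s(\mathcal{C}_{\Omega})$-norm.

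The only genuinely delicate point — and the one I would expect to need the most care — is the minimality statement $\int y^{1-2s}|\nabla E_s[u]|^2 \le \int y^{1-2s}|\nabla \varphi|^2$ in the \emph{mixed} boundary setting. In the pure Dirichlet case this is classical (it is the Dirichlet principle for the degenerate operator $\mathrm{div}(y^{1-2s}\nabla\cdot)$), but here one must check that the test function $\varphi - E_s[u]$ is an admissible variation, i.e.\ that it lies in $\mathcal{X}_{\Sigma_{\mathcal{D}}}^s(\mathcal{C}_{\Omega})$ and has zero trace on $\Omega\times\{y=0\}$, so that the weak formulation of \eqref{extension} with test function $\varphi - E_s[u]$ gives $\int y^{1-2s}\langle \nabla E_s[u], \nabla(\varphi-E_s[u])\rangle\,dxdy = 0$; expanding the square $|\nabla\varphi|^2 = |\nabla E_s[u]|^2 + 2\langle\nabla E_s[u],\nabla(\varphi-E_s[u])\rangle + |\nabla(\varphi-E_s[u])|^2$ then gives the inequality. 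Since $\varphi$ and $E_s[u]$ both have trace $u$, their difference has zero trace and belongs to the closure of $\mathcal{C}_0^\infty\big((\Omega\cup\Sigma_{\mathcal{N}})\times(0,\infty)\big)$, hence is an admissible test function; the mixed boundary condition on the lateral part is handled automatically because that is baked into the definition of the space $\mathcal{X}_{\Sigma_{\mathcal{D}}}^s(\mathcal{C}_{\Omega})$. So, apart from this standard but slightly technical verification, the Lemma is an immediate corollary of \eqref{poinc} and the isometry \eqref{norma2}.
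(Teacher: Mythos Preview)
Your proof is correct and follows essentially the same approach as the paper: reduce \eqref{eq:traceineq} via \eqref{poinc} to the inequality $\|E_s[\varphi(\cdot,0)]\|_{\mathcal{X}_{\Sigma_{\mathcal{D}}}^s(\mathcal{C}_{\Omega})}\le \|\varphi\|_{\mathcal{X}_{\Sigma_{\mathcal{D}}}^s(\mathcal{C}_{\Omega})}$, and prove the latter by expanding $|\nabla\varphi|^2=|\nabla E_s[u]|^2+2\langle\nabla E_s[u],\nabla(\varphi-E_s[u])\rangle+|\nabla(\varphi-E_s[u])|^2$ and using that the cross term vanishes by the equation satisfied by $E_s[u]$. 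Your discussion of the admissibility of $\varphi-E_s[u]$ as a test function and of the density argument is in fact more careful than the paper's own proof, which takes these points for granted.
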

\begin{proof}
Thanks to \eqref{poinc} in order to prove \eqref{eq:traceineq} it
only remains to show the inequality
$\|E_s[\varphi(\cdot,0)]\|_{\mathcal{X}_{\Sigma_{\mathcal{D}}}^s(\mathcal{C}_{\Omega})}\leq\|\varphi
\|_{\mathcal{X}_{\Sigma_{\mathcal{D}}}^s(\mathcal{C}_{\Omega})}$.
This inequality is satisfied since, arguing as in \cite{BrCdPS},
\begin{align*}
\|\varphi \|_{\mathcal{X}_{\Sigma_{\mathcal{D}}}^s(\mathcal{C}_{\Omega})}^2=& \int_{\mathcal{C}_{\Omega}}y^{1-2s} |\nabla \varphi|^2dxdy\\
=&\int_{\mathcal{C}_{\Omega}}y^{1-2s} |\nabla \big(E_s[\varphi(\cdot,0)]+\varphi-E_s[\varphi(\cdot,0)]\big)|^2dxdy\\
=&\|E_s[\varphi(\cdot,0)]\|^2_{\mathcal{X}_{\Sigma_{\mathcal{D}}}^s(\mathcal{C}_{\Omega})}+ \|\varphi-E(\varphi(\cdot,0))\|^2_{\mathcal{X}_{\Sigma_{\mathcal{D}}}^s(\mathcal{C}_{\Omega})}\\
&+2\int_{\mathcal{C}_{\Omega}}y^{1-2s} \langle\nabla E_s[\varphi(\cdot,0)], \nabla(\varphi-E_s[\varphi(\cdot,0)])\rangle dxdy\\
=&\|E_s[\varphi(\cdot,0)]\|^2_{\mathcal{X}_{\Sigma_{\mathcal{D}}}^s(\mathcal{C}_{\Omega})}+
\|\varphi-E_s[\varphi(\cdot,0)]\|^2_{\mathcal{X}_{\Sigma_{\mathcal{D}}}^s(\mathcal{C}_{\Omega})}\\
&+2\int_{\Omega}(-\Delta)^s (\varphi(\cdot,0))(\varphi(x,0)-\varphi(x,0))dx\\
=&\|E_s[\varphi(\cdot,0)]\|^2_{\mathcal{X}_{\Sigma_{\mathcal{D}}}^s(\mathcal{C}_{\Omega})}+ \|\varphi-E_s[\varphi(\cdot,0)]\|^2_{\mathcal{X}_{\Sigma_{\mathcal{D}}}^s(\mathcal{C}_{\Omega})},
\end{align*}
which concludes the proof.
\end{proof}
Consider now the following quotient
\begin{equation*}
Q_{\lambda}(w)=\frac{\|w\|_{\mathcal{X}_{\Sigma_{\mathcal{D}}}^s(\mathcal{C}_{\Omega})}^2-\lambda\|u\|_{L^2(\Omega)}^2}{\|u\|_{L^{2_s^*}(\Omega)}^2},
\end{equation*}
where $w=E_s[u]$, and take
\begin{equation}\label{infimo}
S_{\lambda}(\Omega)=\inf\limits_{\substack{w\in \mathcal{X}_{\Sigma_{\mathcal{D}}}^s(\mathcal{C}_{\Omega})\\ w\not\equiv 0}}\big\{Q_{\lambda}(w)\big\}.
\end{equation}
If the constant $S_{\lambda}(\Omega)$ is achieved then problem $(P_{\lambda}^*)$  will have at least one solution, and thus problem $(P_{\lambda})$
 has also at least one solution, as we will see in the proof of Theorem \ref{th_existencia}.
 To study the
 behavior of $Q_{\lambda}(\cdot)$ we introduce the constants $\widetilde{S}(\Sigma_{\mathcal{N}})$ and
 $\widetilde{S}(\Sigma_{\mathcal{D}})$ which are inspired in the works \cite{G} and \cite{ACP} respectively.

\begin{definition}
For $x_0\in\Sigma_{\mathcal{N}}$ we define the function
\begin{align*}
  \Theta_{\lambda} \colon \Sigma_{\mathcal{N}} &\to \mathbb{R}\\
  x_0 &\mapsto \Theta_{\lambda}(x_0),
\end{align*}
by
\begin{equation*}
\Theta_{\lambda}(x_0)=\lim_{\rho\to 0}S_{\lambda}\big(\Omega_{\rho}(x_0)\big),
\end{equation*}
where $\Omega_{\rho}(x_0)=\Omega\cap B_{\rho}(x_0)$ and the
respective infimum in $S_{\lambda}\big(\Omega_{\rho}(x_0)\big)$ is
taken over the set of functions that vanish on
$\Sigma_{\mathcal{D}}^{\rho}=\partial\Omega_{\rho}(x_0)\cap\overline{\Omega}$.\newline
We define the Sobolev constant relative to the Neumann boundary part
as
\begin{equation*}
\widetilde{S}(\Sigma_{\mathcal{N}})=\inf_{x_0\in\Sigma_{\mathcal{N}}}\Theta_{\lambda}(x_0).
\end{equation*}
\end{definition}
This constant plays a major role in the existence issues of problem
$(P_{\lambda})$, similar of that of the Sobolev constant in the
classical Brezis-Nirenberg problem. The next three theorems, which
are going to be proved in Section \ref{sec:4}, will be useful in the
proof of the main result, Theorem \ref{th_existencia}.
\begin{theorem}\label{th1}
If $S_{\lambda}(\Omega)<\widetilde{S}(\Sigma_{\mathcal{N}})$ then the infimum \eqref{infimo} is achieved.
\end{theorem}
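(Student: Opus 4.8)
The plan is to run the standard concentration-compactness / direct-method argument adapted to the cylinder $\mathcal{C}_\Omega$ with mixed boundary data. Let $(w_n)\subset\mathcal{X}_{\Sigma_{\mathcal{D}}}^s(\mathcal{C}_\Omega)$ be a minimizing sequence for $Q_\lambda$; by $0$-homogeneity of $Q_\lambda$ we may normalize $\|w_n(\cdot,0)\|_{L^{2_s^*}(\Omega)}=1$, so that $\|w_n\|_{\mathcal{X}_{\Sigma_{\mathcal{D}}}^s(\mathcal{C}_\Omega)}^2-\lambda\|w_n(\cdot,0)\|_{L^2(\Omega)}^2\to S_\lambda(\Omega)$. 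First I would check that $(w_n)$ is bounded in $\mathcal{X}_{\Sigma_{\mathcal{D}}}^s(\mathcal{C}_\Omega)$: since $\lambda<\lambda_{1,s}$ (note $S_\lambda(\Omega)$ can only be $>-\infty$, equivalently the quadratic form is coercive, precisely when $\lambda$ is below the first eigenvalue, which is guaranteed in the regime of Theorem \ref{th_existencia}), the form $\|w\|^2-\lambda\|w(\cdot,0)\|_{L^2}^2$ is equivalent to $\|w\|_{\mathcal{X}_{\Sigma_{\mathcal{D}}}^s(\mathcal{C}_\Omega)}^2$, using the variational characterization of $\lambda_{1,s}$ and the compactness of the trace embedding $\mathcal{X}_{\Sigma_{\mathcal{D}}}^s(\mathcal{C}_\Omega)\hookrightarrow L^2(\Omega)$. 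Hence, up to a subsequence, $w_n\rightharpoonup w$ weakly in $\mathcal{X}_{\Sigma_{\mathcal{D}}}^s(\mathcal{C}_\Omega)$, $w_n(\cdot,0)\to w(\cdot,0)$ strongly in $L^2(\Omega)$ and a.e. in $\Omega$.

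Next I would apply a concentration-compactness lemma of Lions type on the trace measures: writing $\mu_n=\kappa_s y^{1-2s}|\nabla w_n|^2\,dxdy$ and $\nu_n=|w_n(\cdot,0)|^{2_s^*}\,dx$, pass to weak-$*$ limits $\mu_n\rightharpoonup\mu\ge\kappa_s y^{1-2s}|\nabla w|^2+\sum_j\mu_j\delta_{(x_j,0)}$ and $\nu_n\rightharpoonup\nu=|w(\cdot,0)|^{2_s^*}+\sum_j\nu_j\delta_{x_j}$, with the atoms tied together by the appropriate local Sobolev inequality: for $x_j$ an interior point of $\Omega$ one gets $\mu_j\ge S(s,N)\,\nu_j^{2/2_s^*}$; for $x_j\in\Sigma_{\mathcal{D}}$, $\mu_j\ge S(s,N)\,\nu_j^{2/2_s^*}$ as well (the half-space with a Dirichlet wall only improves the constant); and for $x_j\in\Sigma_{\mathcal{N}}$, by the very definition of $\Theta_\lambda$ and $\widetilde{S}(\Sigma_{\mathcal{N}})$, one has $\mu_j\ge\widetilde{S}(\Sigma_{\mathcal{N}})\,\nu_j^{2/2_s^*}$ (a localized blow-up around $x_0\in\Sigma_{\mathcal{N}}$ with the half-ball domains $\Omega_\rho(x_0)$ realizes exactly $\Theta_\lambda(x_0)$). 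I would also record the behavior at infinity in the $y$-variable, $\mu_\infty,\nu_\infty$, with $\mu_\infty\ge S(s,N)\nu_\infty^{2/2_s^*}$, to rule out vanishing up the cylinder. Since $\widetilde{S}(\Sigma_{\mathcal{N}})\le S(s,N)$ (testing $\Theta_\lambda$ against translates/rescalings of $u_\varepsilon$ gives $\le S(s,N)$; this is presumably among the "useful properties" of Section \ref{sec:3}), in all cases every atom satisfies $\mu_j\ge\widetilde{S}(\Sigma_{\mathcal{N}})\,\nu_j^{2/2_s^*}$.

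Now comes the dichotomy that finishes the proof. From $\sum_j\nu_j+\nu_\infty+\int_\Omega|w(\cdot,0)|^{2_s^*}=1$ and $S_\lambda(\Omega)=\lim\big(\|w_n\|^2-\lambda\|w_n(\cdot,0)\|_{L^2}^2\big)\ge \|w\|^2-\lambda\|w(\cdot,0)\|_{L^2}^2+\sum_j\mu_j+\mu_\infty$, combined with the atomic inequalities and $\|w\|^2-\lambda\|w(\cdot,0)\|_{L^2}^2\ge S_\lambda(\Omega)\big(\int_\Omega|w(\cdot,0)|^{2_s^*}\big)^{2/2_s^*}$, I would derive
\[
S_\lambda(\Omega)\ \ge\ S_\lambda(\Omega)\,\theta_0^{2/2_s^*}+\widetilde{S}(\Sigma_{\mathcal{N}})\sum_j\theta_j^{2/2_s^*},
\]
where $\theta_0=\int_\Omega|w(\cdot,0)|^{2_s^*}$ and $\theta_j=\nu_j$ (absorbing $\nu_\infty$ into the atomic sum since its constant is even larger), with $\theta_0+\sum_j\theta_j=1$ and all $\theta_\bullet\in[0,1]$. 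Because $t\mapsto t^{2/2_s^*}$ is strictly concave with exponent $<1$, $\sum\theta_\bullet^{2/2_s^*}\ge 1$ with equality iff exactly one $\theta_\bullet$ equals $1$; plugging in and using the strict hypothesis $S_\lambda(\Omega)<\widetilde{S}(\Sigma_{\mathcal{N}})$ forces all atoms to vanish and $\theta_0=1$, i.e. $w_n(\cdot,0)\to w(\cdot,0)$ strongly in $L^{2_s^*}(\Omega)$ with $w(\cdot,0)\not\equiv 0$. Weak lower semicontinuity of the norm then gives $Q_\lambda(w)\le S_\lambda(\Omega)$, hence equality, so the infimum is achieved at $w$ (and after replacing $w$ by $|w|$, or by the trace of the extension of $|w(\cdot,0)|$, one may take it nonnegative). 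The main obstacle I anticipate is the careful justification of the local Sobolev inequalities underlying the atomic bounds at boundary points — in particular showing that a bubble concentrating at $x_0\in\Sigma_{\mathcal{N}}$ "sees" exactly the constant $\Theta_\lambda(x_0)$ and that the Dirichlet portion $\Sigma_{\mathcal{D}}$ never produces a constant smaller than $S(s,N)$ — together with the truncation/cutoff arguments on the weighted cylinder needed to localize $\mu_n,\nu_n$ near each concentration point; the weight $y^{1-2s}$ and the mixed boundary condition make these localizations more delicate than in the classical case, and this is where I would invoke the trace inequality of Lemma \ref{lem:traceineq} and the estimates of Section \ref{sec:3}.
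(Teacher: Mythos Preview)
Your approach is correct and is in spirit the same concentration--compactness strategy as the paper's, but the organization differs in one noteworthy way. The paper does not run a single Lions-type argument with position-dependent atomic constants; instead it first establishes an intermediate dichotomy (Theorem~\ref{CCA}) using the \emph{Dirichlet} constant $\widetilde{S}(\Sigma_{\mathcal{D}})$: either the minimizing sequence is relatively compact, or it concentrates at a single point $x_0\in\overline{\Sigma}_{\mathcal{N}}$ (the location being forced because interior or Dirichlet concentration would cost $\kappa_s S(s,N)>\widetilde{S}(\Sigma_{\mathcal{D}})$). Only then does the short proof of Theorem~\ref{th1} kick in: localizing the concentrating sequence with cutoffs $\overline{\phi}_r$ around $x_0$ and invoking the very definition of $\widetilde{S}(\Sigma_{\mathcal{N}})=\inf_{x_0}\lim_{\rho\to0}S_\lambda(\Omega_\rho(x_0))$ yields $S_\lambda(\Omega)\ge\widetilde{S}(\Sigma_{\mathcal{N}})$, contradicting the hypothesis. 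Your route folds all of this into one step by asserting the atomic bound $\mu_j\ge\widetilde{S}(\Sigma_{\mathcal{N}})\,\nu_j^{2/2_s^*}$ directly at Neumann points; this is legitimate and arguably cleaner, but note that the justification of that bound \emph{is} exactly the cutoff computation the paper isolates, so you have not avoided the work, only relocated it. Two small corrections: there is no $\nu_\infty$ since the trace measures live on the compact set $\overline{\Omega}$ (only $\mu_\infty$, escape of gradient energy up the cylinder, is an issue, and the paper proves tightness separately in the proof of Theorem~\ref{CCA}); and the interior/Dirichlet atomic constant is $\kappa_s S(s,N)$, not $S(s,N)$.
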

As we will see below, the constant
$\widetilde{S}(\Sigma_{\mathcal{N}})$ depends only on the regularity
of the Neumann boundary part, but it is independent of the Dirichlet
boundary part $\Sigma_{\mathcal{D}}$. Since the properties of a
Dirichlet problem are quite different from those of a Neumann
problem, one would expect that this fact is reflected when we move
our boundary conditions, specifically when
$\mathcal{H}_{N-1}(\Sigma_{\mathcal{D}})=\alpha\to0$, see Lemma
\ref{convergence} below. To do so we define the following constant.

\begin{definition}The Sobolev constant relative to the Dirichlet boundary part is defined by
\begin{equation*}
\widetilde{S}(\Sigma_{\mathcal{D}})=\inf_{\substack{u\in
H_{\Sigma_{\mathcal{D}}}^s(\Omega)\\ u\not\equiv
0}}\frac{\|u\|_{H_{\Sigma_{\mathcal{D}}}^s(\Omega)}^2}{\|u\|_{L^{2_s^*}(\Omega)}^2}.
\end{equation*}
\end{definition}
\begin{remark}
As it is noted in the proof of Lemma \ref{lem:traceineq}, the
extension function minimizes the
$\|\cdot\|_{\mathcal{X}_{\Sigma_{\mathcal{D}}}^s(\mathcal{C}_{\Omega})}$
norm along all the functions with the same trace on $\{y=0\}$, thus
we can reformulate the definition of
$\widetilde{S}(\Sigma_{\mathcal{D}})$ as follows,
\begin{equation*}
\widetilde{S}(\Sigma_{\mathcal{D}})=\inf_{\substack{w\in \mathcal{X}_{\Sigma_{\mathcal{D}}}^s(\mathcal{C}_{\Omega})\\
w\not\equiv
0}}\frac{\|w\|_{\mathcal{X}_{\Sigma_{\mathcal{D}}}^s(\mathcal{C}_{\Omega})}^2}{\|w(\cdot,0)\|_{L^{2_s^*}(\Omega)}^2}.
\end{equation*}
\end{remark}
Arguing in a similar way as in \cite[Theorem 2.2]{ACP} we can prove
the following theorem.
\begin{theorem}\label{th_att}
If $\widetilde{S}(\Sigma_{\mathcal{D}})<2^{\frac{-2s}{N}}\kappa_sS(s,N)$ then $\widetilde{S}(\Sigma_{\mathcal{D}})$ is attained.
\end{theorem}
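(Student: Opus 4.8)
The plan is to run a concentration--compactness argument on the extension formulation of $\widetilde{S}(\Sigma_{\mathcal{D}})$, in the spirit of \cite[Theorem 2.2]{ACP} and \cite{G}. First I would fix a minimizing sequence $\{w_k\}\subset\mathcal{X}_{\Sigma_{\mathcal{D}}}^s(\mathcal{C}_{\Omega})$ normalized so that $\|w_k(\cdot,0)\|_{L^{2_s^*}(\Omega)}=1$ and $\|w_k\|_{\mathcal{X}_{\Sigma_{\mathcal{D}}}^s(\mathcal{C}_{\Omega})}^2\to\widetilde{S}(\Sigma_{\mathcal{D}})$. This sequence is bounded, so after passing to a subsequence $w_k\rightharpoonup w$ in the Hilbert space $\mathcal{X}_{\Sigma_{\mathcal{D}}}^s(\mathcal{C}_{\Omega})$ (hence $w$ lies in it, being a closed subspace), $w_k(\cdot,0)\to w(\cdot,0)$ in $L^q(\Omega)$ for every $q<2_s^*$ and a.e. in $\Omega$ by compactness of the subcritical trace embedding $\mathcal{X}_{\Sigma_{\mathcal{D}}}^s(\mathcal{C}_{\Omega})\hookrightarrow L^q(\Omega)$, and the measures $\kappa_s y^{1-2s}|\nabla w_k|^2\,dxdy$ and $|w_k(\cdot,0)|^{2_s^*}\,dx$ converge weakly-$*$ to measures $\mu$ on $\overline{\mathcal{C}_{\Omega}}$ and $\nu$ on the compact set $\overline{\Omega}$, with $\nu(\overline{\Omega})=1$.

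Next I would invoke the concentration--compactness structure: there is an at most countable set $\{x_j\}_{j\in J}\subset\overline{\Omega}$ and positive numbers $\nu_j,\mu_j$ with
\begin{equation*}
\nu=|w(\cdot,0)|^{2_s^*}\,dx+\sum_{j\in J}\nu_j\,\delta_{x_j},\qquad \mu\ \geq\ \kappa_s\,y^{1-2s}|\nabla w|^2\,dxdy+\sum_{j\in J}\mu_j\,\delta_{(x_j,0)},
\end{equation*}
together with the local Sobolev inequalities $\mu_j\geq\widehat{S}(x_j)\,\nu_j^{2/2_s^*}$. The only place the boundary conditions enter is the lower bound for the local constant $\widehat{S}(x_j)$: if $x_j$ is interior to $\Omega$, or lies on $\Sigma_{\mathcal{D}}$, the blow-up analysis gives $\widehat{S}(x_j)\geq\kappa_s S(s,N)$; if $x_j\in\Sigma_{\mathcal{N}}$ (or on the interface $\Gamma$) one flattens the boundary near $x_j$ and performs an even reflection across it, which doubles both $\int y^{1-2s}|\nabla\,\cdot\,|^2$ and $\int|\,\cdot\,(x,0)|^{2_s^*}$, hence multiplies the Rayleigh quotient by $2^{1-2/2_s^*}=2^{2s/N}$, so the whole-space Sobolev bound yields $\widehat{S}(x_j)\geq 2^{-2s/N}\kappa_s S(s,N)$. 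In all cases $\widehat{S}(x_j)\geq 2^{-2s/N}\kappa_s S(s,N)>\widetilde{S}(\Sigma_{\mathcal{D}})$ by hypothesis. I expect the identification of $\widehat{S}(x_j)$ at Neumann and interface points, namely the reflection argument and the fact that a Dirichlet portion of the boundary can only raise the local constant, to be the main technical point, and it is precisely the ingredient to be borrowed from \cite{G} and \cite[Theorem 2.2]{ACP}; the possible loss of mass of $\mu$ as $y\to\infty$ is harmless since only the lower bound on $\mu$ is used.

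To conclude, set $t_0=\int_\Omega|w(\cdot,0)|^{2_s^*}\,dx$ and $t_j=\nu_j$, so that $\sum_{j\in J\cup\{0\}}t_j=1$. Combining $\widetilde{S}(\Sigma_{\mathcal{D}})=\lim\|w_k\|_{\mathcal{X}_{\Sigma_{\mathcal{D}}}^s(\mathcal{C}_{\Omega})}^2\geq\mu(\overline{\mathcal{C}_{\Omega}})\geq\|w\|_{\mathcal{X}_{\Sigma_{\mathcal{D}}}^s(\mathcal{C}_{\Omega})}^2+\sum_{j\in J}\mu_j$ with the definition of $\widetilde{S}(\Sigma_{\mathcal{D}})$ and the local inequalities, one gets
\begin{equation*}
\widetilde{S}(\Sigma_{\mathcal{D}})\ \geq\ \widetilde{S}(\Sigma_{\mathcal{D}})\,t_0^{2/2_s^*}+2^{-2s/N}\kappa_s S(s,N)\sum_{j\in J}t_j^{2/2_s^*}.
\end{equation*}
Since $2/2_s^*=\tfrac{N-2s}{N}\in(0,1)$ we have $t^{2/2_s^*}\geq t$ on $[0,1]$, and $\widetilde{S}(\Sigma_{\mathcal{D}})>0$ by \eqref{poinc}; hence, if some $t_j>0$ with $j\in J$, the strict inequality $2^{-2s/N}\kappa_s S(s,N)>\widetilde{S}(\Sigma_{\mathcal{D}})$ forces $\widetilde{S}(\Sigma_{\mathcal{D}})>\widetilde{S}(\Sigma_{\mathcal{D}})\sum_{j\in J\cup\{0\}}t_j^{2/2_s^*}\geq\widetilde{S}(\Sigma_{\mathcal{D}})$, a contradiction. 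Therefore $J$ carries no mass, $t_0=1$, so $\nu=|w(\cdot,0)|^{2_s^*}\,dx$ with $\|w(\cdot,0)\|_{L^{2_s^*}(\Omega)}=1$, and in particular $w\not\equiv0$. Weak lower semicontinuity of $\|\cdot\|_{\mathcal{X}_{\Sigma_{\mathcal{D}}}^s(\mathcal{C}_{\Omega})}$ gives $\|w\|_{\mathcal{X}_{\Sigma_{\mathcal{D}}}^s(\mathcal{C}_{\Omega})}^2\leq\widetilde{S}(\Sigma_{\mathcal{D}})$, while the definition of the infimum gives the reverse inequality, so $w$ attains $\widetilde{S}(\Sigma_{\mathcal{D}})$. (Alternatively, the claim follows from Theorem \ref{th1} with $\lambda=0$ once one uses that $\widetilde{S}(\Sigma_{\mathcal{D}})$ coincides with the $\lambda=0$ value of \eqref{infimo} and that $\widetilde{S}(\Sigma_{\mathcal{N}})=2^{-2s/N}\kappa_s S(s,N)$, as established in Section \ref{sec:3}.)
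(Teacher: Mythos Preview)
Your proposal is correct and follows essentially the same concentration--compactness strategy as the paper. The only organizational difference is that the paper first packages the dichotomy ``relative compactness versus single-point concentration at some $x_0\in\overline{\Sigma}_{\mathcal{N}}$'' into an auxiliary result (Theorem~\ref{CCA}, which in turn rests on Theorem~\ref{concompact} with the cruder bound $\mu_i\geq\widetilde{S}(\Sigma_{\mathcal{D}})\nu_i^{2/2_s^*}$), and then rules out the second alternative by localizing near $x_0$ and invoking the value $2^{-2s/N}\kappa_s S(s,N)$ of the Neumann local constant; you instead insert the sharper, location-dependent lower bounds $\widehat{S}(x_j)\geq 2^{-2s/N}\kappa_s S(s,N)$ directly into the concentration--compactness inequality and exclude all atoms in one stroke. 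Both routes use the same ingredients (flattening plus even reflection at Neumann points, the full Sobolev constant at interior and Dirichlet points), and your parenthetical alternative via Theorem~\ref{th1} with $\lambda=0$ and Proposition~\ref{const_neu} is exactly the paper's logical path.
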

\begin{remarks}\label{rem:cte_Sobolev}
\hfill \break
\begin{enumerate}
\item This result makes the difference between the Dirichlet boundary
condition case and the mixed Dirichlet-Neumann boundary condition
case. Note that, by taking $\lambda=0$ in $(P_{\lambda})$, we have
the critical power problem which, in the Dirichlet case, has no
positive solution under some geometrical assumptions on $\Omega$,
for example, under star-shapeness assumptions on the domain
$\Omega$, see \cite{BrCdPS,Po}, or under some assumptions on the
topology of the domain $\Omega$, see \cite{BahCo}, where  a non-existence result for domains $\Omega$ with trivial
topology is established.
\item In the mixed case, the corresponding Sobolev constant
$\widetilde{S}(\Sigma_{\mathcal{D}})$ can be achieved thanks to
Theorem \ref{th_att}. As we will see, the hypotheses of Theorem
\ref{th_att} can be fulfilled by moving the size of the Dirichlet
boundary part.
\end{enumerate}
\end{remarks}
The next result  is analogous to that of Theorem \ref{th1} for the
constant relative to the Dirichlet part.
\begin{theorem}\label{th_dirichlet}
If $S_{\lambda}(\Omega)<\widetilde{S}(\Sigma_{\mathcal{D}})$ then
$S_{\lambda}(\Omega)$ is attained.
\end{theorem}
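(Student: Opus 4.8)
The plan is to argue by the direct method of the calculus of variations on the quotient $Q_\lambda$, extracting a minimizing sequence for $S_\lambda(\Omega)$ and ruling out its loss of compactness by a Brezis--Lieb type splitting, the strict inequality $S_\lambda(\Omega)<\widetilde{S}(\Sigma_{\mathcal{D}})$ being precisely what forbids the escape of mass along the noncompact directions.

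First I would fix a minimizing sequence $\{w_k\}\subset\mathcal{X}_{\Sigma_{\mathcal{D}}}^s(\mathcal{C}_{\Omega})$ for \eqref{infimo}. Since $Q_\lambda$ is $0$-homogeneous we may normalize $\|w_k(\cdot,0)\|_{L^{2_s^*}(\Omega)}=1$, so that $\|w_k\|_{\mathcal{X}_{\Sigma_{\mathcal{D}}}^s(\mathcal{C}_{\Omega})}^2-\lambda\|w_k(\cdot,0)\|_{L^2(\Omega)}^2=S_\lambda(\Omega)+o(1)$. Bounding $\|w_k(\cdot,0)\|_{L^2(\Omega)}$ by $\|w_k(\cdot,0)\|_{L^{2_s^*}(\Omega)}$ through H\"older's inequality on the bounded domain $\Omega$ shows that $\{w_k\}$ is bounded in $\mathcal{X}_{\Sigma_{\mathcal{D}}}^s(\mathcal{C}_{\Omega})$; hence, along a subsequence, $w_k\rightharpoonup w$ weakly there, $w_k(\cdot,0)\rightharpoonup w(\cdot,0)$ weakly in $L^{2_s^*}(\Omega)$, and, by compactness of the trace embedding $\mathcal{X}_{\Sigma_{\mathcal{D}}}^s(\mathcal{C}_{\Omega})\hookrightarrow L^q(\Omega)$ for $q<2_s^*$ (as in the Dirichlet case, cf. \cite{BrCdPS}), strongly in $L^2(\Omega)$; in particular $\|w_k(\cdot,0)\|_{L^2(\Omega)}^2\to\|w(\cdot,0)\|_{L^2(\Omega)}^2$, so the linear term drops out of the limiting competition.

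Next, set $v_k:=w_k-w$, so $v_k\rightharpoonup 0$ in $\mathcal{X}_{\Sigma_{\mathcal{D}}}^s(\mathcal{C}_{\Omega})$. Orthogonality of weak limits in the Hilbert space $\mathcal{X}_{\Sigma_{\mathcal{D}}}^s(\mathcal{C}_{\Omega})$ gives $\|w_k\|_{\mathcal{X}_{\Sigma_{\mathcal{D}}}^s(\mathcal{C}_{\Omega})}^2=\|w\|_{\mathcal{X}_{\Sigma_{\mathcal{D}}}^s(\mathcal{C}_{\Omega})}^2+\|v_k\|_{\mathcal{X}_{\Sigma_{\mathcal{D}}}^s(\mathcal{C}_{\Omega})}^2+o(1)$, while the Brezis--Lieb lemma applied to the traces yields $1=\|w(\cdot,0)\|_{L^{2_s^*}(\Omega)}^{2_s^*}+\|v_k(\cdot,0)\|_{L^{2_s^*}(\Omega)}^{2_s^*}+o(1)$. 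Writing $a:=\|w(\cdot,0)\|_{L^{2_s^*}(\Omega)}^{2_s^*}$ and $b:=\lim_k\|v_k(\cdot,0)\|_{L^{2_s^*}(\Omega)}^{2_s^*}$ (along a further subsequence), we have $a+b=1$ and $a,b\ge0$. By the definition of $S_\lambda(\Omega)$ applied to $w$ (trivial if $w\equiv0$) and by the extension form of the definition of $\widetilde{S}(\Sigma_{\mathcal{D}})$ from the Remark applied to each $v_k$,
\[
\|w\|_{\mathcal{X}_{\Sigma_{\mathcal{D}}}^s(\mathcal{C}_{\Omega})}^2-\lambda\|w(\cdot,0)\|_{L^2(\Omega)}^2\ \ge\ S_\lambda(\Omega)\,a^{2/2_s^*},\qquad \|v_k\|_{\mathcal{X}_{\Sigma_{\mathcal{D}}}^s(\mathcal{C}_{\Omega})}^2\ \ge\ \widetilde{S}(\Sigma_{\mathcal{D}})\,\|v_k(\cdot,0)\|_{L^{2_s^*}(\Omega)}^{2},
\]
and passing to the limit in the decomposed energy identity we obtain $S_\lambda(\Omega)\ \ge\ S_\lambda(\Omega)\,a^{2/2_s^*}+\widetilde{S}(\Sigma_{\mathcal{D}})\,b^{2/2_s^*}$.

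To finish I would use that, since $\tfrac{2}{2_s^*}\in(0,1)$, one has $t^{2/2_s^*}\ge t$ on $[0,1]$ with equality only for $t\in\{0,1\}$; combined with the strict inequality $S_\lambda(\Omega)<\widetilde{S}(\Sigma_{\mathcal{D}})$ and $\widetilde{S}(\Sigma_{\mathcal{D}})>0$ (the latter from \eqref{poinc}), an elementary positivity argument on the last displayed inequality forces $b=0$, hence $a=1$. Consequently $w\not\equiv0$, $\|v_k\|_{\mathcal{X}_{\Sigma_{\mathcal{D}}}^s(\mathcal{C}_{\Omega})}\to0$, i.e.\ $w_k\to w$ strongly in $\mathcal{X}_{\Sigma_{\mathcal{D}}}^s(\mathcal{C}_{\Omega})$, and equality propagates back to give $Q_\lambda(w)=S_\lambda(\Omega)$, so the infimum \eqref{infimo} is attained. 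The only pieces needing care are the compactness of the trace embedding into $L^q(\Omega)$, $q<2_s^*$, and the validity of the Brezis--Lieb decomposition of the $L^{2_s^*}$ norm together with the Hilbert orthogonality identity for the $\mathcal{X}_{\Sigma_{\mathcal{D}}}^s(\mathcal{C}_{\Omega})$ norm; the main point — morally the same as in Theorems \ref{th1} and \ref{th_att} — is to bound below by the critical constant the mass lost along the noncompact directions, which here is immediate because $\widetilde{S}(\Sigma_{\mathcal{D}})$ is a \emph{global} Rayleigh quotient on $\mathcal{X}_{\Sigma_{\mathcal{D}}}^s(\mathcal{C}_{\Omega})$, so this attainment result is the softest of the three.
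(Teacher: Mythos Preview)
Your argument is correct and follows essentially the same route as the paper's own proof, which is embedded as Case~\textbf{(1)} in the proof of Theorem~\ref{CCA} (the paper notes this explicitly in the Remark following that proof). Both arguments normalize a minimizing sequence, split the energy via Hilbert orthogonality and a Brezis--Lieb decomposition of the critical $L^{2_s^*}$ norm, bound the ``escaping'' part from below by $\widetilde{S}(\Sigma_{\mathcal{D}})$ and the ``remaining'' part by $S_\lambda(\Omega)$, and then use the strict gap to force the escaping mass to zero; your write-up is slightly more explicit in deducing strong $\mathcal{X}^s_{\Sigma_{\mathcal{D}}}$-convergence, whereas the paper stops at $L^{2_s^*}$-convergence plus lower semicontinuity, but the substance is the same.
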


\section{Properties of the constants $\widetilde{S}(\Sigma_{\mathcal{N}})$ and
$\widetilde{S}(\Sigma_{\mathcal{D}})$}\label{sec:3}

\begin{proposition}\label{const_neu}
The constant $\widetilde{S}(\Sigma_{\mathcal{N}})$ does not depend on $\lambda$, moreover, if $\Sigma_{\mathcal{N}}$ is a regular $(N-1)$-dimensional
submanifold of $\partial\Omega$, then $\widetilde{S}(\Sigma_{\mathcal{N}})=2^{\frac{-2s}{N}}\kappa_sS(s,N)$.
\end{proposition}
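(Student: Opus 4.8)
The plan is to prove the sharper pointwise identity $\Theta_\lambda(x_0)=2^{-\frac{2s}{N}}\kappa_sS(s,N)$ for \emph{every} $x_0\in\Sigma_{\mathcal{N}}$; both assertions then follow at once, since the infimum over $x_0\in\Sigma_{\mathcal{N}}$ of this constant is the constant itself. First I would dispose of the $\lambda$-dependence, which disappears because $\Omega_\rho(x_0)\subset B_\rho(x_0)$ shrinks to a point. For any measurable $D\subset\Omega$ and $u\in L^{2_s^*}(D)$, H\"older's inequality gives $\|u\|_{L^2(D)}^2\le|D|^{\frac{2s}{N}}\|u\|_{L^{2_s^*}(D)}^2$, so for any $w=E_s[u]\in\mathcal{X}_{\Sigma_{\mathcal{D}}^{\rho}}^{s}(\mathcal{C}_{\Omega_{\rho}(x_0)})$, $w\not\equiv0$,
$$0\le Q_0(w)-Q_\lambda(w)=\lambda\,\frac{\|u\|_{L^2(\Omega_\rho(x_0))}^2}{\|u\|_{L^{2_s^*}(\Omega_\rho(x_0))}^2}\le\lambda\,|\Omega_\rho(x_0)|^{\frac{2s}{N}}\le C\lambda\,\rho^{2s},$$
uniformly in $w$; hence $|S_\lambda(\Omega_\rho(x_0))-S_0(\Omega_\rho(x_0))|\le C\lambda\rho^{2s}\to0$ as $\rho\to0$, so $\Theta_\lambda(x_0)=\Theta_0(x_0)$ and it remains to compute $\Theta_0(x_0)$.

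Next I would identify the limiting object as a half-space Sobolev constant. Let $\mathbb{R}^N_+=\{x_N>0\}$, $\mathcal{C}_+=\mathbb{R}^N_+\times(0,\infty)$, and let $\mathcal{S}_+$ be the infimum of $\kappa_s\int_{\mathcal{C}_+}y^{1-2s}|\nabla w|^2\,dx\,dy\,\big/\,\big(\int_{\mathbb{R}^N_+}w(x,0)^{2_s^*}dx\big)^{2/2_s^*}$ over nonzero $w$ in the completion of $C_0^\infty(\overline{\mathbb{R}^N_+}\times[0,\infty))$ under the weighted Dirichlet norm — the energy space corresponding to a Neumann condition on the wall $\{x_N=0\}$. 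I claim $\mathcal{S}_+=2^{-\frac{2s}{N}}\kappa_sS(s,N)$, and would prove it by even reflection: for such $w$, $\bar w(x',x_N,y):=w(x',|x_N|,y)$ lies in $\mathcal{X}^s(\mathbb{R}^{N+1}_+)$ with $\int_{\mathbb{R}^{N+1}_+}y^{1-2s}|\nabla\bar w|^2=2\int_{\mathcal{C}_+}y^{1-2s}|\nabla w|^2$ and $\int_{\mathbb{R}^N}\bar w(x,0)^{2_s^*}=2\int_{\mathbb{R}^N_+}w(x,0)^{2_s^*}$; inserting these into the sharp whole-space trace inequality with constant $S(s,N)$ and simplifying via $1-\frac{2}{2_s^*}=\frac{2s}{N}$ gives $\mathcal{S}_+\ge2^{-\frac{2s}{N}}\kappa_sS(s,N)$, while equality is realized by the restriction to $\mathcal{C}_+$ of the radial whole-space extremal $E_s[u_1]$, whose conormal derivative on the wall vanishes by symmetry.

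Then I would show $\Theta_0(x_0)=\mathcal{S}_+$. Since $\Sigma_{\mathcal{N}}$ is a regular piece of $\partial\Omega$ and $x_0\in\Sigma_{\mathcal{N}}$, near $x_0$ the boundary is smooth and entirely of Neumann type, so after a rigid motion there is a smooth diffeomorphism $\Psi$ from a neighbourhood of $x_0$ in $\overline\Omega$ onto a neighbourhood of $0$ in $\overline{\mathbb{R}^N_+}$ with $\Psi(x_0)=0$, $D\Psi(x_0)=\mathrm{Id}$, $\Psi(\Omega)\subset\mathbb{R}^N_+$, $\Psi(\partial\Omega)\subset\partial\mathbb{R}^N_+$; lifting it to the cylinders by $\widetilde\Psi(x,y)=(\Psi(x),y)$ it preserves $y^{1-2s}$ and distorts both $\int y^{1-2s}|\nabla\cdot|^2$ and $\int(\cdot)^{2_s^*}dx$ by Jacobian factors that are $1+O(\rho)$ on $B_\rho(x_0)$. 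For $\Theta_0(x_0)\ge\mathcal{S}_+$: given admissible $w$ on $\mathcal{C}_{\Omega_\rho(x_0)}$, set $\widetilde w=w\circ\widetilde\Psi^{-1}$ on $\mathcal{C}_{\Psi(\Omega_\rho(x_0))}$ and extend it by zero to $\mathcal{C}_+$ — legitimate because $w$ vanishes on $\Sigma_{\mathcal{D}}^{\rho}$, whose image is exactly the part of $\partial\Psi(\Omega_\rho(x_0))$ not lying on $\partial\mathbb{R}^N_+$ — so $\widetilde w$ competes in $\mathcal{S}_+$ and comparing quotients yields $S_0(\Omega_\rho(x_0))\ge(1-C\rho)\mathcal{S}_+$. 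For $\Theta_0(x_0)\le\mathcal{S}_+$: fix $\rho$, $\delta>0$, pick $w_\delta$ with compact support in $\overline{\mathbb{R}^N_+}\times[0,\infty)$ and quotient $<\mathcal{S}_++\delta$ (a truncation of $E_s[u_1]$), rescale it to sit in a ball of radius $\mu$, and carry it through $\widetilde\Psi^{-1}$ into $\mathcal{C}_{\Omega_\rho(x_0)}$; for $\mu$ small its support avoids $\partial B_\rho(x_0)$, it vanishes on $\Sigma_{\mathcal{D}}^{\rho}$, and the Jacobian distortion is $1+o(1)$, so $S_0(\Omega_\rho(x_0))\le\mathcal{S}_++\delta$, hence $\le\mathcal{S}_+$ as $\delta\to0$. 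Letting $\rho\to0$ gives $\Theta_0(x_0)=\mathcal{S}_+=2^{-\frac{2s}{N}}\kappa_sS(s,N)$, and by the first step the same value holds for $\Theta_\lambda(x_0)$; taking the infimum over $x_0\in\Sigma_{\mathcal{N}}$ completes the proof.

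The hardest part will be making the third step rigorous: checking that $\widetilde w$ (extended by zero) genuinely belongs to the energy space on $\mathcal{C}_+$ — that the Dirichlet vanishing on the cut is preserved by $\widetilde\Psi^{\pm1}$ and that no spurious boundary mass is introduced along $\partial\mathbb{R}^N_+$ — and that the weighted Dirichlet energy and the $L^{2_s^*}$ norm are distorted by factors uniformly of the form $1+O(\rho)$ on $B_\rho(x_0)$, notwithstanding the degeneracy of $y^{1-2s}$ at $y=0$ (which is harmless here precisely because $\widetilde\Psi$ acts as the identity in $y$). The even-reflection identities used above should be established first for smooth competitors and then passed to the limit by density.
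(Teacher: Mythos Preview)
Your proof is correct and follows the same overall strategy as the paper: eliminate the $\lambda$-dependence via H\"older on shrinking domains, then identify $\Theta_0(x_0)$ with a flat-model Sobolev constant through a boundary-straightening diffeomorphism. The execution differs in two places. For the upper bound $\Theta_0(x_0)\le 2^{-2s/N}\kappa_sS(s,N)$, the paper tests directly with the truncated extremals $\psi_\rho w_\varepsilon$ centered at $x_0\in\Sigma_{\mathcal N}$ and invokes the volume proportion $|B_\rho(x_0)\cap\Omega|/|B_\rho(x_0)|\to\tfrac12$ together with the explicit estimates of Lemma~\ref{lemma_est}, whereas you transport rescaled half-space competitors through $\widetilde\Psi^{-1}$; the paper's route is more computational but yields concrete error terms in $\varepsilon,\rho$, while yours is cleaner and makes the scale invariance do the work. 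For the lower bound, the paper flattens to the half-ball $B_\rho^+$, uses scaling to reduce to $S_0(B_1^+)$, and then asserts $S_0(B_1^+)=2^{-2s/N}\kappa_sS(s,N)$ from Lemma~\ref{lemma_est}; you flatten directly to the half-space and compute $\mathcal S_+$ by even reflection, which is more transparent and spells out the step the paper leaves implicit. Either model (half-ball or half-space) works, and your caution about the zero-extension across the Dirichlet cut and the $y$-invariance of $\widetilde\Psi$ is exactly the right checkpoint.
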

We split the proof into several Lemmas.
\begin{lemma}\label{nolambda}
The constant $\widetilde{S}(\Sigma_{\mathcal{N}})$ does not depend on $\lambda$.
\end{lemma}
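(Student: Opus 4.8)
The plan is to exploit that, inside the quotient $Q_\lambda$, the term carrying the parameter $\lambda$ is of lower order with respect to the Sobolev term and therefore becomes negligible in the localization process $\rho\to0$ that defines $\Theta_\lambda(x_0)$. Fix $x_0\in\Sigma_{\mathcal{N}}$ and $\rho>0$, and abbreviate $\Omega_\rho=\Omega_\rho(x_0)=\Omega\cap B_\rho(x_0)$. For every $w$ in the class over which $S_\lambda(\Omega_\rho(x_0))$ is computed, writing $u=w(\cdot,0)$, one has the elementary identity
\begin{equation*}
Q_0(w)=Q_\lambda(w)+\lambda\,\frac{\|u\|_{L^2(\Omega_\rho)}^2}{\|u\|_{L^{2_s^*}(\Omega_\rho)}^2},
\end{equation*}
where the numerator term $\|w\|_{\mathcal{X}}^2$ cancels in the difference.

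Since $2<2_s^*=\tfrac{2N}{N-2s}$, Hölder's inequality on $\Omega_\rho$ gives $\|u\|_{L^2(\Omega_\rho)}^2\le|\Omega_\rho|^{\frac{2s}{N}}\|u\|_{L^{2_s^*}(\Omega_\rho)}^2$, and from $\Omega_\rho\subseteq B_\rho(x_0)$ we get $|\Omega_\rho|\le C_N\rho^N$ with $C_N$ depending only on $N$. Hence the quotient above is bounded by $C\rho^{2s}$, with $C=C_N^{2s/N}$ independent of $w$ and of $x_0$. Taking the infimum over all admissible $w$ and recalling $\lambda\ge0$, this yields
\begin{equation*}
S_\lambda(\Omega_\rho(x_0))\le S_0(\Omega_\rho(x_0))\le S_\lambda(\Omega_\rho(x_0))+\lambda C\rho^{2s},
\end{equation*}
so that $0\le S_0(\Omega_\rho(x_0))-S_\lambda(\Omega_\rho(x_0))\le\lambda C\rho^{2s}\to0$ as $\rho\to0$.

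Consequently, if one of the limits defining $\Theta_0(x_0)$ or $\Theta_\lambda(x_0)$ exists then so does the other, and they agree: $\Theta_\lambda(x_0)=\Theta_0(x_0)$ for every $x_0\in\Sigma_{\mathcal{N}}$ and every $\lambda\ge0$. Taking the infimum over $x_0\in\Sigma_{\mathcal{N}}$ gives $\widetilde{S}(\Sigma_{\mathcal{N}})=\inf_{x_0\in\Sigma_{\mathcal{N}}}\Theta_0(x_0)$, an expression in which $\lambda$ does not appear, which is exactly the claim.

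The argument is essentially computational and I do not anticipate a genuine obstacle. The only mild point requiring attention is the uniformity of the Hölder constant in $w$ and in $x_0$; this is immediate, since the sole geometric input is the volume bound $|\Omega_\rho(x_0)|\le|B_\rho(x_0)|$, which holds for every boundary point $x_0\in\partial\Omega$ irrespective of the local shape of $\Omega$ near $x_0$.
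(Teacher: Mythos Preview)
Your proof is correct and follows essentially the same idea as the paper: both arguments show that the $\lambda$-dependent term in $Q_\lambda$ is of lower order, controlled by $|\Omega_\rho|^{2s/N}\to 0$, via H\"older's inequality. The one minor difference is that the paper, after H\"older, additionally invokes the trace inequality \eqref{eq:traceineq} to obtain a \emph{multiplicative} lower bound $S_\lambda(\Omega_\rho)\ge(1-\lambda C|\Omega_\rho|^{2s/N})S_0(\Omega_\rho)$, whereas you stop at H\"older and obtain the \emph{additive} two-sided bound $S_\lambda(\Omega_\rho)\le S_0(\Omega_\rho)\le S_\lambda(\Omega_\rho)+\lambda C\rho^{2s}$; your version is slightly more direct since it bypasses Lemma~\ref{lem:traceineq} entirely.
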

\begin{proof}
Note that by the very definition of
$\widetilde{S}(\Sigma_{\mathcal{N}})$ it is enough to prove that
$\Theta_{\lambda}(x_0)$ does not depend on $\lambda$, that is
$\Theta_{\lambda}(x_0)=\Theta(x_0)=\lim\limits_{\rho\to0}S_{0}\big(\Omega_{\rho}(x_0)\big)$.
Since $\lambda\geq0$, then it is immediate that
$\Theta_{\lambda}(x_0)\leq\lim\limits_{\rho\to0}S_{0}\big(\Omega_{\rho}(x_0)\big)$.
On the other hand, using H\"older's inequality and the trace
inequality \eqref{eq:traceineq} jointly, we get
\begin{equation*}
\|\varphi\|_{L^2(\Omega_{\rho})}^2\leq
|\Omega_{\rho}(x_0)|^{\frac{2s}{N}}\|\varphi\|_{L^{2_s^*}(\Omega_{\rho}(x_0))}^2\leq
C|\Omega_{\rho}(x_0)|^{\frac{2s}{N}}\|E_s[\varphi]\|_{\mathcal{X}_{\Sigma_{\mathcal{D}}}^s(\mathcal{C}_{\Omega_{\rho}(x_0)})}^2,
\end{equation*}
thus
\begin{equation*}
\Theta_{\lambda}(x_0)\geq\lim_{\rho\to0}\left(1-\lambda C|\Omega_{\rho}(x_0)|^{\frac{2s}{N}}\right)S_{0}\big(\Omega_{\rho}(x_0)\big).
\end{equation*}
And the result follows.
\end{proof}
Bearing in mind Lemma \ref{nolambda}, to prove the last assertion of
Proposition \ref{const_neu}, we need to estimate
$S_{0}\big(\Omega_{\rho}(x_0)\big)=\inf\left\{Q_{0}(w): w\in
\mathcal{X}_{\Sigma_{\mathcal{D}}}^s
(\mathcal{C}_{\Omega_{\rho}(x_0)})\right\}$. To do so, we use the
family of extremal functions of the Sobolev inequality,
\begin{equation*}
u_{\varepsilon}(x)=\frac{\varepsilon^{\frac{N-2s}{2}}}{(\varepsilon^2+|x|^2)^{\frac{N-2s}{2}}},
\end{equation*}
and its $s$-extension, $w_{\varepsilon}(x)=E_s[u_{\varepsilon}]$,
times a cut-off function as a test function. Note that both
functions $u_{\varepsilon}$ and the Poisson kernel \eqref{poisson}
are self-similar functions,
$u_{\varepsilon}(x)=\varepsilon^{-\frac{N-2s}{2}}u_1(x)$, and
$P_y^s(x)=\frac{1}{y^N}P_1^s\left(\frac{x}{y}\right)$ so the
extension family $w_{\varepsilon}=E_s[u_{\varepsilon}]$ satisfies
\begin{equation}\label{self}
w_{\varepsilon}(x)=\varepsilon^{-\frac{N-2s}{2}}w_{1}\left(\frac{x}{\varepsilon},\frac{y}{\varepsilon}\right).
\end{equation}
Consider a smooth non-increasing cut-off function
$\phi_0(t)\in\mathcal{C}^{\infty}(\mathbb{R}_+)$, satisfying
$\phi_0(t)=1$ for $0\leq t\leq\frac{1}{2}$ and $\phi_0(t)=0$ for
$t\geq1$, and $|\phi_0'(t)|\le C$ for any $t\ge 0$. Assume, without
loss of generality, that $0\in\Omega$, and define, for some $\rho>0$
small enough such that
${B}_{\rho}^+\subseteq{\mathcal{C}}_{\Omega}$, the function
$\phi_{\rho}(x,y)=\phi_0(\frac{r_{xy}}{\rho})$ with
$r_{xy}=|(x,y)|=(|x|^2+y^2)^{\frac{1}{2}}$.

\begin{lemma}\label{lemma_est}
The family $\{\phi_{\rho} w_{\varepsilon}\}$ and its trace on $\{y=0\}$, $\{\phi_{\rho} u_{\varepsilon}\}$, satisfy
\begin{equation}\label{est1}
\|\phi_{\rho}
w_{\varepsilon}\|_{\mathcal{X}_{\Sigma_{\mathcal{D}}}^s(\mathcal{C}_{\Omega})}^2=\|w_{\varepsilon}\|_{\mathcal{X}_{\Sigma_{\mathcal{D}}}^s(\mathcal{C}_{\Omega})}^2+
O\left(\left(\frac{\varepsilon}{\rho}\right)^{N-2s}\right),
\end{equation}
and
\begin{equation}\label{est2}
\int_{\Omega}|\phi_{\rho}
u_{\varepsilon}|^{2_s^*}dx=\|u_{\varepsilon}\|_{L^{2_s^*}(\mathbb{R}^N)}^{2_s^*}+O\left(\left(\frac{\varepsilon}{\rho}\right)^N\right).
\end{equation}
\end{lemma}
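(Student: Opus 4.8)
The plan is to establish the two estimates separately: \eqref{est2}, which is elementary, and then \eqref{est1}, which carries the analytic content. Both arguments rest on two facts. The first is the self-similarity \eqref{self}, used in the form $w_\varepsilon(x,y)=\varepsilon^{-\frac{N-2s}{2}}w_1\!\left(\frac{x}{\varepsilon},\frac{y}{\varepsilon}\right)$ and $u_\varepsilon(x)=\varepsilon^{-\frac{N-2s}{2}}u_1\!\left(\frac{x}{\varepsilon}\right)$; in particular the change of variables $(x,y)=\varepsilon(\xi,\eta)$ is invariant for the weighted Dirichlet energy, so $\kappa_s\int_{\mathbb{R}_+^{N+1}}y^{1-2s}|\nabla w_\varepsilon|^2\,dx\,dy$ does not depend on $\varepsilon$, and it differs from $\|w_\varepsilon\|^2_{\mathcal{X}_{\Sigma_{\mathcal{D}}}^s(\mathcal{C}_\Omega)}=\kappa_s\int_{\mathcal{C}_\Omega}y^{1-2s}|\nabla w_\varepsilon|^2$ only by the tail over $\mathbb{R}_+^{N+1}\setminus\mathcal{C}_\Omega$, a region lying at distance $\geq\operatorname{dist}(0,\partial\Omega)>0$ from the origin. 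The second is the pointwise decay of the extremal bubble and of its extension,
\[
u_1(x)\le \frac{C}{(1+|x|^2)^{\frac{N-2s}{2}}},\qquad
w_1(x,y)\le \frac{C}{(1+|x|^2+y^2)^{\frac{N-2s}{2}}},\qquad
|\nabla w_1(x,y)|\le \frac{C}{(1+|x|^2+y^2)^{\frac{N-2s+1}{2}}},
\]
which follow from the explicit Poisson-kernel representation \eqref{poisson} (see \cite{BrCdPS}). Throughout, constants are independent of $\varepsilon$ and $\rho$, and we may assume $0<\varepsilon\le\rho$ with $B_\rho^+\subseteq\mathcal{C}_\Omega$, so that $\phi_\rho w_\varepsilon$ is supported in $B_\rho^+$ and $\phi_\rho u_\varepsilon$ in the $N$-dimensional ball $B_\rho\subseteq\Omega$. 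Finally we record the radial identity $\int_{\mathbb{R}_+^{N+1}}f(|(x,y)|)\,y^{1-2s}\,dx\,dy=c_N\int_0^\infty f(t)\,t^{N+1-2s}\,dt$, valid since $1-2s>-1$, which shows that integrating against $y^{1-2s}\,dx\,dy$ replaces the Euclidean radial weight $t^N\,dt$ by $t^{N+1-2s}\,dt$.

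For \eqref{est2}, since $\phi_\rho u_\varepsilon$ is supported in $B_\rho\subseteq\Omega$ and $0\le\phi_\rho\le1$ with $\phi_\rho\equiv1$ on $B_{\rho/2}$,
\[
\int_\Omega|\phi_\rho u_\varepsilon|^{2_s^*}\,dx=\int_{\mathbb{R}^N}u_\varepsilon^{2_s^*}\,dx-\int_{\mathbb{R}^N\setminus B_\rho}u_\varepsilon^{2_s^*}\,dx-\int_{B_\rho}\bigl(1-\phi_\rho^{2_s^*}\bigr)u_\varepsilon^{2_s^*}\,dx,
\]
and the two subtracted terms are nonnegative and bounded above by $\int_{\mathbb{R}^N\setminus B_{\rho/2}}u_\varepsilon^{2_s^*}\,dx$. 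Rescaling $x=\varepsilon z$ turns this last integral into $\int_{\mathbb{R}^N\setminus B_{\rho/(2\varepsilon)}}(1+|z|^2)^{-N}\,dz\le C\int_{\rho/(2\varepsilon)}^\infty t^{-N-1}\,dt=O\bigl((\varepsilon/\rho)^N\bigr)$, which is \eqref{est2}.

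For \eqref{est1}, I would expand
\[
|\nabla(\phi_\rho w_\varepsilon)|^2=\phi_\rho^2|\nabla w_\varepsilon|^2+2\,\phi_\rho w_\varepsilon\,\nabla\phi_\rho\!\cdot\!\nabla w_\varepsilon+w_\varepsilon^2|\nabla\phi_\rho|^2,
\]
multiply by $\kappa_s y^{1-2s}$, integrate over $B_\rho^+$, and treat the three pieces in turn. The leading piece equals $\kappa_s\int_{\mathbb{R}_+^{N+1}}y^{1-2s}|\nabla w_\varepsilon|^2$ minus $\kappa_s\int_{\mathbb{R}_+^{N+1}}\bigl(1-\phi_\rho^2\chi_{B_\rho^+}\bigr)y^{1-2s}|\nabla w_\varepsilon|^2$; since the latter integrand is supported in $\mathbb{R}_+^{N+1}\setminus B_{\rho/2}^+$ and bounded by $1$, the rescaling \eqref{self} turns it into at most $\int_{\mathbb{R}_+^{N+1}\setminus B_{\rho/(2\varepsilon)}^+}y^{1-2s}|\nabla w_1|^2$, which by the gradient decay of $w_1$ and the radial identity above (the exponent $-N+2s-1<-1$ as $N>2s$) is $O\bigl((\varepsilon/\rho)^{N-2s}\bigr)$; the error from replacing $\mathcal{C}_\Omega$ by $\mathbb{R}_+^{N+1}$ is likewise $O(\varepsilon^{N-2s})$. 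For the last piece, $|\nabla\phi_\rho|\le C/\rho$ is supported on the shell $B_\rho^+\setminus B_{\rho/2}^+$, so after rescaling
\[
\kappa_s\int_{B_\rho^+}y^{1-2s}w_\varepsilon^2|\nabla\phi_\rho|^2\,dx\,dy\le\frac{C\varepsilon^2}{\rho^2}\int_{B_{\rho/\varepsilon}^+\setminus B_{\rho/(2\varepsilon)}^+}y^{1-2s}w_1^2\,dx\,dy\le\frac{C\varepsilon^2}{\rho^2}\Bigl(\frac{\rho}{\varepsilon}\Bigr)^{2-(N-2s)}=C\Bigl(\frac{\varepsilon}{\rho}\Bigr)^{N-2s},
\]
using $w_1^2\le C t^{-2(N-2s)}$ integrated against $t^{N+1-2s}\,dt$ over a shell of radius $\sim\rho/\varepsilon$. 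Finally the cross term: applying Cauchy--Schwarz \emph{on the shell} $B_\rho^+\setminus B_{\rho/2}^+$ where $\nabla\phi_\rho$ lives, it is bounded by $2\kappa_s\bigl(\int_{B_\rho^+\setminus B_{\rho/2}^+}y^{1-2s}|\nabla w_\varepsilon|^2\bigr)^{1/2}\bigl(\int_{B_\rho^+\setminus B_{\rho/2}^+}y^{1-2s}w_\varepsilon^2|\nabla\phi_\rho|^2\bigr)^{1/2}$, and since each factor is $O\bigl((\varepsilon/\rho)^{(N-2s)/2}\bigr)$ by the two estimates just obtained, the product is $O\bigl((\varepsilon/\rho)^{N-2s}\bigr)$. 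Adding the three contributions gives $\|\phi_\rho w_\varepsilon\|^2_{\mathcal{X}_{\Sigma_{\mathcal{D}}}^s(\mathcal{C}_\Omega)}=\|w_\varepsilon\|^2_{\mathcal{X}_{\Sigma_{\mathcal{D}}}^s(\mathcal{C}_\Omega)}+O\bigl((\varepsilon/\rho)^{N-2s}\bigr)$.

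The step I expect to be the main obstacle is the cross term: a naive global Cauchy--Schwarz only delivers the exponent $(N-2s)/2$, which is insufficient, so one really must use that $\nabla\phi_\rho$ concentrates on the thin shell $B_\rho^+\setminus B_{\rho/2}^+$ to upgrade it to the sharp power $N-2s$; once the two shell integrals are under control this closes at once. The remainder is bookkeeping — chiefly keeping track of the weight $t^{N+1-2s}\,dt$ in the radial integrals so that the decay rates of $w_1$ and $\nabla w_1$ are matched correctly against $N-2s$, and checking that swapping $\mathcal{C}_\Omega$ for $\mathbb{R}_+^{N+1}$ costs only $O(\varepsilon^{N-2s})$ since the discarded region sits away from the origin.
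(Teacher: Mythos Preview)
Your proof is correct and follows essentially the same route as the paper: expand $|\nabla(\phi_\rho w_\varepsilon)|^2$, localize all error terms to the shell $\{\rho/2\le r_{xy}\le\rho\}$, rescale via \eqref{self}, and control the resulting integrals by the pointwise decay of $w_1$ and $\nabla w_1$. The only cosmetic difference is in the cross term, where you apply Cauchy--Schwarz in integral form on the shell and reuse the two shell estimates already obtained, whereas the paper applies it pointwise and then invokes \eqref{mayor} together with a direct Poisson-kernel bound \eqref{cotita} on $w_1$.
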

The proof of this Lemma is similar to the proof of  \cite[Lemma
3.8]{BCdPS} for the Dirichlet boundary conditions. Note that in the
Dirichlet case it is not necessary to control the role of the radius
of the cut-off function, on the contrary, in the mixed case, by the
very definition of the constant
$\widetilde{S}(\Sigma_{\mathcal{N}})$, a careful analysis of the
role of that radius is needed. Now we estate the following result
proved in \cite[Lemma 3.7]{BCdPS} that will be useful in the proof
of Lemma \ref{lemma_est}.

\begin{lemma} \cite[Lemma 3.7]{BCdPS}
The family $w_{\varepsilon}=w_{\varepsilon,s}=E_s[u_{\varepsilon}]$
satisfies
\begin{equation}\label{mayor}
|\nabla w_{1,s}(x,y)|\leq C w_{1,s-\frac{1}{2}}(x,y),\ \frac{1}{2}<s<1,\ (x,y)\in\mathbb{R}_{+}^{N+1}.
\end{equation}
\end{lemma}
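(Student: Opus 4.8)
The plan is to exploit the self-similar structure of the $s$-extension of the Aubin--Talenti bubble on the half-space. For $\varepsilon=1$ write $u_1(x)=(1+|x|^2)^{-\frac{N-2s}{2}}$; by \eqref{poisson} one has $w_{1,s}=E_s[u_1]=P_y^s\ast u_1$ with $P_y^s(x)=c_{N,s}\,y^{2s}(|x|^2+y^2)^{-\frac{N+2s}{2}}$, and since $P_y^s(z)=y^{-N}P_1^s(z/y)$ the change of variables $z=x-y\zeta$ rewrites this as
\[ w_{1,s}(x,y)=\int_{\mathbb{R}^N}P_1^s(\zeta)\,u_1(x-y\zeta)\,d\zeta,\qquad P_1^s(\zeta)=c_{N,s}\,(1+|\zeta|^2)^{-\frac{N+2s}{2}}. \]
The very same identity holds with $s$ replaced by any admissible exponent $\sigma\in(0,1)$, in particular by $\sigma=s-\tfrac12\in(0,\tfrac12)$, which is where the restriction $\tfrac12<s<1$ first plays a role.

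The next step is to differentiate this representation under the integral sign. This is legitimate because $u_1\in C^\infty(\mathbb{R}^N)$ has bounded gradient while $(1+|\zeta|)P_1^s(\zeta)\asymp(1+|\zeta|^2)^{-\frac{N+2s-1}{2}}$ belongs to $L^1(\mathbb{R}^N)$ precisely when $N+2s-1>N$, i.e. when $s>\tfrac12$; dominated convergence then gives $\nabla_x w_{1,s}(x,y)=\int P_1^s(\zeta)\,\nabla u_1(x-y\zeta)\,d\zeta$ and $\partial_y w_{1,s}(x,y)=-\int P_1^s(\zeta)\,\zeta\cdot\nabla u_1(x-y\zeta)\,d\zeta$, so that
\[ |\nabla w_{1,s}(x,y)|\le C\int_{\mathbb{R}^N}(1+|\zeta|)\,P_1^s(\zeta)\,|\nabla u_1(x-y\zeta)|\,d\zeta. \]

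Finally I would feed in two elementary pointwise estimates. From $\nabla u_1(\xi)=-(N-2s)\,\xi\,(1+|\xi|^2)^{-\frac{N-2s+2}{2}}$ together with $|\xi|\le(1+|\xi|^2)^{1/2}$ one obtains $|\nabla u_1(\xi)|\le(N-2s)(1+|\xi|^2)^{-\frac{N-2s+1}{2}}=(N-2s)\,u_{1,s-\frac12}(\xi)$, i.e. the gradient of the $s$-bubble is controlled by the $(s-\tfrac12)$-bubble; and comparing powers, $(1+|\zeta|)P_1^s(\zeta)\le C(1+|\zeta|^2)^{-\frac{N+2s-1}{2}}=C\,P_1^{s-\frac12}(\zeta)$. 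Inserting both bounds into the last display and reading the self-similar identity of the first paragraph backwards, now for the parameter $s-\tfrac12$, yields
\[ |\nabla w_{1,s}(x,y)|\le C\int_{\mathbb{R}^N}P_1^{s-\frac12}(\zeta)\,u_{1,s-\frac12}(x-y\zeta)\,d\zeta=C\,\bigl(P_y^{s-\frac12}\ast u_{1,s-\frac12}\bigr)(x)=C\,w_{1,s-\frac12}(x,y), \]
which is exactly \eqref{mayor}.

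I do not expect a real obstacle here; the only thing to keep an eye on is that the assumption $\tfrac12<s<1$ is used twice and for genuinely different reasons --- to make $(1+|\zeta|)P_1^s$ integrable, so that $\partial_y$ may be brought inside the integral, and to make $s-\tfrac12$ an admissible extension exponent, so that $w_{1,s-\frac12}$ is defined --- consistently with Remark \ref{rem:rango_s}.
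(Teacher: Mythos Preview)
The paper does not give its own proof of this lemma; it is quoted from \cite[Lemma 3.7]{BCdPS} and used as a black box in the proof of Lemma~\ref{lemma_est}. Your argument is a correct self-contained proof: the self-similar rewriting $w_{1,s}(x,y)=\int P_1^s(\zeta)\,u_1(x-y\zeta)\,d\zeta$, the differentiation under the integral (legitimate precisely because $(1+|\zeta|)P_1^s(\zeta)\in L^1$ when $s>\tfrac12$), and the two pointwise comparisons $|\nabla u_{1,s}|\le C\,u_{1,s-\frac12}$ and $(1+|\zeta|)P_1^s(\zeta)\le C\,P_1^{s-\frac12}(\zeta)$ all hold as written, and recombining them via the representation formula for the $(s-\tfrac12)$-extension closes the loop. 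This is essentially the argument given in the cited source, so there is no meaningful divergence to report.
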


\begin{proof}[Proof of Lemma \ref{lemma_est}]
We start with the proof of \eqref{est2},

\begin{align*}
\int_{\Omega}|\phi_{\rho} u_{\varepsilon}|^{2_s^*}dx&=\int_{\mathbb{R}^N}|\phi_{\rho} u_{\varepsilon}|^{2_s^*}dx\geq \int_{|x|<\frac{\rho}{2}}| u_{\varepsilon}|^{2_s^*}dx\\
&=\|u_{\varepsilon}\|_{L^{2_s^*}(\mathbb{R}^N)}^{2_s^*}-\int_{|x|>\frac{\rho}{2}}|
u_{\varepsilon}|^{2_s^*}dx.
\end{align*}
Observe that
\begin{align*}
\int_{|x|>\frac{\rho}{2}}| u_{\varepsilon}|^{2_s^*}dx&=\varepsilon^{-N}\int_{|x|>\frac{\rho}{2}}\frac{1}{\left(1+\left(\frac{|x|}{\varepsilon}\right)^2\right)^N}dx\\
&=\varepsilon^{-N}\int_{\frac{\rho}{2}}^{\infty}\frac{t^{N-1}}{\left(1+\left(\frac{t}{\varepsilon}\right)^2\right)^N}dt=\int_{\frac{\rho}{2\varepsilon}}^{\infty}\frac{s^{N-1}}{\left(1+s^2\right)^N}ds\\
&\leq \int_{\frac{\rho}{2\varepsilon}}^{\infty}s^{-N-1}ds=\left(\frac{\varepsilon}{\rho}\right)^N,
\end{align*}
so we get
\begin{equation*}
\int_{\Omega}|\phi_{\rho} u_{\varepsilon}|^{2_s^*}dx\geq
\|u_{\varepsilon}\|_{L^{2_s^*}(\mathbb{R}^N)}^{2_s^*}+
O\left(\left(\frac{\varepsilon}{\rho}\right)^N\right).
\end{equation*}
We continue with the proof of \eqref{est1}. The product $\phi_{\rho}w_{\varepsilon}$ satisfies

\begin{align}
\|\phi_{\rho} w_{\varepsilon}\|_{\mathcal{X}_{\Sigma_{\mathcal{D}}}^s(\mathcal{C}_{\Omega})}^2&\leq \|w_{\varepsilon}\|_{\mathcal{X}_{\Sigma_{\mathcal{D}}}^s(\mathcal{C}_{\Omega})}^2\notag\\
&+\kappa_s\int_{\mathcal{C}_{\Omega}}y^{1-2s}|w_{\varepsilon}\nabla\phi_{\rho}|^2dxdy+2\kappa_s\int_{\mathcal{C}_{\Omega}}y^{1-2s}\langle w_{\varepsilon}\nabla\phi_{\rho},\phi_{\rho}\nabla w_{\varepsilon} \rangle dxdy\label{st_lem}.
\end{align}
The first term of the right-hand side in \eqref{st_lem} can be estimated as follows,
\begin{align*}
\int_{\mathcal{C}_{\Omega}}y^{1-2s}|w_{\varepsilon}\nabla\phi_{\rho}|^2dxdy&\leq\frac{C}{\rho^2}\int\limits_{\{\frac{\rho}{2}\leq r_{xy}\leq \rho\}}y^{1-2s}w_{\varepsilon}^2dxdy\\
&\leq\frac{C}{\rho^2}\varepsilon^{N-2s}\!\!\!\!\!\int\limits_{\{\frac{\rho}{2}\leq r_{xy}\leq \rho\}}y^{1-2s}r_{xy}^{-2(N-2s)}dxdy\\
&\leq \frac{C}{\rho^2}\varepsilon^{N-2s}\int_{\frac{\rho}{2}}^{\rho}s^{1+2s-N}ds\\
&=O\left(\left(\frac{\varepsilon}{\rho}\right)^{N-2s}\right),
\end{align*}
since $0\leq u_{\varepsilon}(x)\leq \varepsilon^{\frac{N-2s}{2}}|x|^{-(N-2s)}$ and the extension of the function $K(x)=|x|^{-(N-2s)}$ is $\tilde{K}(x,y)=(|x|^2+y^2)^{-\frac{N-2s}{2}}=r_{xy}^{-(N-2s)}$.
\newline
We end with the estimate of the second term of the right-hand side in \eqref{st_lem}. Applying Cauchy-Schwarz inequality and using \eqref{self} we get,
\begin{equation}\label{coti}
\begin{split}
&\int_{\mathcal{C}_{\Omega}}y^{1-2s}\langle w_{\varepsilon}\nabla\phi_{\rho},\phi_{\rho}\nabla w_{\varepsilon} \rangle dxdy\\
&\leq \frac{C}{\rho}\!\!\!\int\limits_{\{\frac{\rho}{2}\leq r_{xy}\leq \rho\}}y^{1-2s}|w_{\varepsilon}(x,y)\|\nabla w_{\varepsilon}(x,y)|dxdy\\
&\leq \frac{C}{\rho}\varepsilon^{-(N-2s)-1}\!\!\!\!\!\!\!\!\!\int\limits_{\{\frac{\rho}{2}\leq r_{xy}\leq \rho\}}y^{1-2s}|w_{1}\left(\frac{x}{\varepsilon},\frac{y}{\varepsilon}\right)\|\nabla w_{1}\left(\frac{x}{\varepsilon},\frac{y}{\varepsilon}\right)|dxdy\\
&=\frac{C}{\rho}\varepsilon\!\!\!\!\!\int\limits_{\{\frac{\rho}{2\varepsilon}\leq
r_{xy}\leq \frac{\rho}{\varepsilon}\}}y^{1-2s}|w_1(x,y)\|\nabla
w_1(x,y)|dxdy.
\end{split}
\end{equation}
Note that for $(x,y)\in\{\frac{\rho}{2\varepsilon}\leq r_{xy}\leq \frac{\rho}{\varepsilon}\}$ we have
\begin{equation}\label{cotita}
\begin{split}
w_1(x,y)&=\int\limits_{|z|<\frac{\rho}{4\varepsilon}}P_y^s(x-z)u_1(z)dz+\int\limits_{|z|>\frac{\rho}{4\varepsilon}}P_y^s(x-z)u_1(z)dz\\
&\leq C\left(\frac{\varepsilon}{\rho}\right)^{N+2s}y^{2s}\int\limits_{|z|<\frac{\rho}{4\varepsilon}}u_1(z)dz+C\left(\frac{\varepsilon}{\rho}\right)^{N-2s}\int\limits_{|z|>\frac{\rho}{4\varepsilon}}P_y^s(x-z)dz\\
&\leq C\left(\frac{\varepsilon}{\rho}\right)^{N+2s}y^{2s}\int\limits_{|z|<\frac{\rho}{4\varepsilon}}\frac{1}{|z|^{N-2s}}dz+C\left(\frac{\varepsilon}{\rho}\right)^{N-2s}\int_{\mathbb{R}^N}P_y^s(x-z)dz\\
&\leq C\left(\frac{\varepsilon}{\rho}\right)^{N}y^{2s}+C\left(\frac{\varepsilon}{\rho}\right)^{N-2s}\leq C\left(\frac{\varepsilon}{\rho}\right)^{N-2s}.
\end{split}
\end{equation}
Using \eqref{cotita}, \eqref{coti} and \eqref{mayor}, we get
\begin{equation*}
\begin{split}
&\int_{\mathcal{C}_{\Omega}}y^{1-2s}\langle w_{\varepsilon}\nabla\phi_{\rho},\phi_{\rho}\nabla w_{\varepsilon} \rangle dxdy\\
&\leq\frac{C}{\rho}\varepsilon\!\!\!\!\!\!\!\int\limits_{\{\frac{\rho}{2\varepsilon}\leq r_{xy}\leq \frac{\rho}{\varepsilon}\}}y^{1-2s}\left(\frac{\varepsilon}{\rho}\right)^{N-2s}\left(\frac{\varepsilon}{\rho}\right)^{N-2(s-1/2)}dxdy\\
&\leq c\left(\frac{\varepsilon}{\rho}\right)^{2(1+N-2s)}\!\!\!\!\!\!\!\!\!\int\limits_{\{\frac{\rho}{2\varepsilon}\leq r_{xy}\leq \frac{\rho}{\varepsilon}\}}y^{1-2s}dxdy=O\left(\left(\frac{\varepsilon}{\rho}\right)^{N-2s}\right).
\end{split}
\end{equation*}
And the proof is complete.
\end{proof}

\begin{lemma}\label{exactly}
Suppose that $\Sigma_{\mathcal{N}}$ is a regular submanifold of $\partial\Omega$, then given $x_0\in\Sigma_{\mathcal{N}}$ it is satisfied that $\Theta_{\lambda}(x_0)=2^{\frac{-2s}{N}}\kappa_sS(s,N)$.
\end{lemma}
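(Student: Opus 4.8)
The plan is to combine Lemma \ref{nolambda} with an exact computation of the localized Sobolev quotient at a Neumann point. By Lemma \ref{nolambda} we may take $\lambda=0$, so it suffices to prove $\Theta(x_0)=\lim_{\rho\to0}S_0(\Omega_\rho(x_0))=2^{-2s/N}\kappa_s S(s,N)$. Since $\Sigma_{\mathcal N}$ is open in $\partial\Omega$ and $x_0\in\Sigma_{\mathcal N}$, for $\rho$ small one has $\partial\Omega\cap B_\rho(x_0)\subset\Sigma_{\mathcal N}$, so the problem defining $S_0(\Omega_\rho(x_0))$ carries the homogeneous Neumann condition on the curved part $\partial\Omega\cap B_\rho(x_0)$ and the Dirichlet condition on the spherical cap $\Sigma_{\mathcal D}^\rho=\overline\Omega\cap\partial B_\rho(x_0)$. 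Flattening $\partial\Omega$ near $x_0$ by a smooth diffeomorphism close to the identity, acting only on the $x$--variables so that the weight $y^{1-2s}$ is left untouched (exactly as in \cite{G,ACP}), one gets $S_0(\Omega_\rho(x_0))=(1+o(1))\,\sigma$ as $\rho\to0$, where $\sigma$ is the value of $S_0$ for the model half--ball $D=B_1\cap\{x_N>0\}\subset\mathbb{R}^N$, with homogeneous Neumann data on the flat face $F=B_1\cap\{x_N=0\}$ and Dirichlet data on the cap $G=\partial B_1\cap\{x_N\ge0\}$; here we use that $S_0$ is scale invariant, so only the shape of the model matters. It then remains to show $\sigma=2^{-2s/N}\kappa_s S(s,N)$.

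For the upper bound on $\sigma$, place the bubble $u_\varepsilon$ at the origin, which lies on the flat Neumann face $F$, and test with $\phi_{1/2}\,w_\varepsilon$, where $w_\varepsilon=E_s[u_\varepsilon]$ is the $s$--extension to $\mathbb{R}_+^{N+1}$. This is admissible: $\phi_{1/2}$ vanishes for $r_{xy}\ge\tfrac12$, hence $\phi_{1/2}w_\varepsilon$ vanishes near $G\times(0,\infty)$, while $w_\varepsilon$, being even in $x_N$, satisfies the homogeneous Neumann condition on $F\times(0,\infty)$. Since $w_\varepsilon$, $u_\varepsilon$ and $\phi_{1/2}$ are all even in $x_N$ and supported in $\{r_{xy}<\tfrac12\}$, the integral of $y^{1-2s}|\nabla(\phi_{1/2}w_\varepsilon)|^2$ over $\mathcal C_D$ equals one half of the same integral over $\mathbb{R}_+^{N+1}$, and likewise $\int_D|\phi_{1/2}u_\varepsilon|^{2_s^*}$ equals one half of $\int_{\mathbb{R}^N}|\phi_{1/2}u_\varepsilon|^{2_s^*}$. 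Applying Lemma \ref{lemma_est}, whose proof applies verbatim in $\mathbb{R}_+^{N+1}$, together with the fact that $w_\varepsilon$ realizes equality in the sharp Sobolev inequality on $\mathbb{R}_+^{N+1}$, i.e.\ $\kappa_s\int_{\mathbb{R}_+^{N+1}}y^{1-2s}|\nabla w_\varepsilon|^2=\kappa_s S(s,N)\|u_\varepsilon\|_{L^{2_s^*}(\mathbb{R}^N)}^2$, we obtain
\[
\sigma\le Q_0(\phi_{1/2}w_\varepsilon)=\frac{\tfrac12\bigl(\kappa_s S(s,N)\|u_\varepsilon\|_{L^{2_s^*}(\mathbb{R}^N)}^2+O(\varepsilon^{N-2s})\bigr)}{\Bigl(\tfrac12\bigl(\|u_\varepsilon\|_{L^{2_s^*}(\mathbb{R}^N)}^{2_s^*}+O(\varepsilon^{N})\bigr)\Bigr)^{2/2_s^*}}.
\]
As $\|u_\varepsilon\|_{L^{2_s^*}(\mathbb{R}^N)}$ is independent of $\varepsilon$, letting $\varepsilon\to0$ and using $2/2_s^*=1-\tfrac{2s}{N}$ gives $\sigma\le 2^{\,2/2_s^*-1}\kappa_s S(s,N)=2^{-2s/N}\kappa_s S(s,N)$.

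For the matching lower bound, let $w$ be any admissible function on $\mathcal C_D$, that is, $\kappa_s\int_{\mathcal C_D}y^{1-2s}|\nabla w|^2<\infty$ and $w=0$ on $G\times(0,\infty)$, and let $\widetilde w(x',x_N,y)=w(x',|x_N|,y)$ be its even reflection across $\{x_N=0\}$, defined on $\mathcal C_{B_1}$ and then extended by $0$ to $\mathbb{R}_+^{N+1}$. This is legitimate because $w$ vanishes on $G\times(0,\infty)$, whence $\widetilde w$ vanishes on $\partial B_1\times(0,\infty)$, so the zero extension lies in $\mathcal X^s(\mathbb{R}_+^{N+1})$; moreover the weight $y^{1-2s}$ does not depend on $x$, so $\kappa_s\int_{\mathbb{R}_+^{N+1}}y^{1-2s}|\nabla\widetilde w|^2=2\kappa_s\int_{\mathcal C_D}y^{1-2s}|\nabla w|^2$ and $\int_{\mathbb{R}^N}|\widetilde w(x,0)|^{2_s^*}=2\int_D|w(x,0)|^{2_s^*}$. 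Plugging $\widetilde w$ into the sharp Sobolev inequality on $\mathbb{R}_+^{N+1}$ and simplifying gives $\kappa_s\int_{\mathcal C_D}y^{1-2s}|\nabla w|^2\ge 2^{\,2/2_s^*-1}\kappa_s S(s,N)\,\|w(\cdot,0)\|_{L^{2_s^*}(D)}^2$, i.e.\ $Q_0(w)\ge 2^{-2s/N}\kappa_s S(s,N)$. Taking the infimum over $w$ yields $\sigma\ge 2^{-2s/N}\kappa_s S(s,N)$, hence $\sigma=2^{-2s/N}\kappa_s S(s,N)$, and therefore $\Theta_\lambda(x_0)=\Theta(x_0)=2^{-2s/N}\kappa_s S(s,N)$.

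The one nonroutine point --- everything else being scaling, symmetry and an application of Lemma \ref{lemma_est} --- is the flattening step: one must check that straightening $\partial\Omega$ near $x_0$ distorts both the weighted energy $\int y^{1-2s}|\nabla w|^2$ and the critical norm $\|w(\cdot,0)\|_{L^{2_s^*}}$ by factors $1+o(1)$ as $\rho\to0$, uniformly over admissible functions, and that the even reflection together with the zero extension in the lower bound genuinely produces an element of $\mathcal X^s(\mathbb{R}_+^{N+1})$ with the stated exact doubling of both norms. Both facts are by now standard in localization arguments of Brezis--Nirenberg type; cf.\ \cite{G,ACP}.
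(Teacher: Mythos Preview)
Your proof is correct and follows essentially the same route as the paper: reduce to $\lambda=0$ via Lemma~\ref{nolambda}, flatten $\partial\Omega$ near $x_0$ to pass to the model half-ball, and identify the model constant as $2^{-2s/N}\kappa_s S(s,N)$ by testing with truncated bubbles and comparing with the global Sobolev constant. The paper organizes things slightly differently---it establishes the upper bound directly on $\Omega_\rho(x_0)$ using the coupled scale $\varepsilon=\rho^\alpha$ and the proportion limit $|\Omega_\rho(x_0)|/|B_\rho(x_0)|\to\tfrac12$, and invokes the diffeomorphism only for the lower bound---whereas you flatten first and carry out both bounds on the model; the content is the same. Your explicit even-reflection-plus-zero-extension argument for $\sigma\ge 2^{-2s/N}\kappa_s S(s,N)$ is in fact more complete than the paper's treatment, which simply asserts $S_0(B_1^+)=2^{-2s/N}\kappa_s S(s,N)$ by citing Lemma~\ref{lemma_est} without spelling out the reflection step needed for the lower bound.
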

\begin{proof}
From Lemma \ref{nolambda} we know that $\Theta_{\lambda}(x_0)=\Theta(x_0)=\lim\limits_{\rho\to0}S_{0}\big(\Omega_{\rho}(x_0)\big)$, also since $\Sigma_{\mathcal{N}}$ is a regular submanifold of $\partial\Omega$, given $x_0\in\Sigma_{\mathcal{N}}$ we have that,
\begin{equation}\label{proportion}
\lim_{\rho\to0}\frac{|B_{\rho}(x_0)\cap\Omega|}{|B_{\rho}(x_0)|}=\frac{1}{2}.
\end{equation}
On the other hand, since $w_{\varepsilon}$ is a minimizer of $S(s,N)$, we have
\begin{equation*}
S(s,N)=\frac{\displaystyle\int_{\mathbb{R}_+^{N+1}}y^{1-2s}|\nabla
w_{\varepsilon}|^2dxdy}{\|u_{\varepsilon}\|_{L^{2_s^*}(\mathbb{R}^N)}^{2}}.
\end{equation*}
We take now a cut-off function centered at
$x_0\in\Sigma_{\mathcal{N}}$, namely, we take
$\psi_{\rho}(x,y)=\phi_0(\frac{\overline{r}_{xy}}{\rho})$ with
$\overline{r}_{xy}=|(x-x_0,y)|=(|x-x_0|^2+y^2)^{\frac{1}{2}}$. Note
that $\psi_{\rho} u_{\varepsilon}\equiv0$ on
$\partial\Omega_{\rho}\cap\Omega$. Thanks to \eqref{est1} and
\eqref{est2} we can choose $\varepsilon=\rho^{\alpha}$ with
$\alpha>1$ such that

\begin{equation}\label{est1_ro}
\|\phi_{\rho}
w_{\rho}\|_{\mathcal{X}_{\Sigma_{\mathcal{D}}}^s(\mathcal{C}_{\Omega})}^2=\|w_{\rho}\|_{\mathcal{X}_{\Sigma_{\mathcal{D}}}^s(\mathcal{C}_{\Omega})}^2+
O\left(\rho^{(\alpha-1)(N-2s)}\right),
\end{equation}
and
\begin{equation}\label{est2_ro}
\|\phi_{\rho}u_{\rho}\|_{L^{2_s^*}(\Omega)}^{2}=\|u_{\rho}\|_{L^{2_s^*}(\mathbb{R}^N)}^{2}+O\left(\rho^{(\alpha-1)N}\right),
\end{equation}
where $\phi_{\rho}$ is the same cut-off function of Lemma \ref{lemma_est}. Using \eqref{proportion}-\eqref{est2_ro}, we have that
\begin{align*}
\Theta(x_0)&=\lim_{\rho\to0}S_{0}\big(\Omega_{\rho}(x_0)\big)\\
&\leq\lim_{\rho\to0}\frac{\|\psi_{\rho} w_{\rho}\|_{\mathcal{X}_{\Sigma_{\mathcal{D}}}^s(\mathcal{C}_{\Omega})}^2}{\|\psi_{\rho} u_{\rho}\|_{L^{2_s^*}(\Omega)}^2}\\
&=\lim_{\rho\to0}\frac{\frac{1}{2}\|\phi_{\rho} w_{\rho}\|_{\mathcal{X}_{\Sigma_{\mathcal{D}}}^s(\mathcal{C}_{\Omega})}^2}{\frac{1}{2^{\frac{2}{2_s^*}}}\|\phi_{\rho} u_{\rho}\|_{L^{2_s^*}(\Omega)}^2}\\
&=2^{\frac{-2s}{N}}\lim_{\rho\to0}\frac{\kappa_sS(s,N)+O(\rho^{(\alpha-1)(N-2s)})}{1+O(\rho^{(\alpha-1)N})}\\
&=2^{\frac{-2s}{N}}\kappa_sS(s,N).
\end{align*}
Finally, we focus on the proof of inequality
$\Theta(x_0)\geq2^{\frac{-2s}{N}}\kappa_sS(s,N)$. To this end we
assert the following.\newline \textbf{Claim:} For
$x_0\in\Sigma_{\mathcal{N}}$ we have
\begin{equation}\label{eqrt}
\Theta_{\lambda}(x_0)=\Theta(x_0)=\lim\limits_{\rho\to0}S_{0}\big(\Omega_{\rho}(x_0)\big)\geq S_{0}(B_{1}^{+}),
\end{equation}
where $B_{1}^{+}$ is the half ball of radius $1$ centered at $x_0$
with the Neumann boundary part on the flat part of $B_1^+$ and the
Dirichlet boundary part on the closure of the remaining
boundary.\newline To prove the claim, we can argue in a similar way
as in \cite{G}. If \eqref{eqrt} is not true, there exists
$\varepsilon>0,\ r_0>0$, such that for $0<\rho<r_0$ there exists a
function $w_{\rho}\in
\mathcal{X}_{\Sigma_{\mathcal{D}}}^s(\mathcal{C}_{\Omega_{\rho}})$
with $u_\rho=Tr[w_\rho]$ such that

\begin{equation}\label{non}
\frac{\|w_{\rho}\|_{\mathcal{X}_{\Sigma_{\mathcal{D}}}^s(\mathcal{C}_{\Omega_{\rho}})}^2}{\|u_{\rho}\|_{L^{2_s^*}(\Omega_{\rho})}^2}<S_{0}(B_1^{+})-\varepsilon.
\end{equation}
Since $x_0$ is a regular point, there exists a diffeomorfism $T_{\rho}$ between $\Omega_{\rho}$ and $B_{\rho}^{+}$ such that $T_{\rho}(\Sigma_{\mathcal{D}}^{\rho})=\partial B_{\rho}^{+}\cap\partial B(x_0,\rho)$ for $\rho$ small enough. Then the function $v_{\rho}=T_{\rho}(w_{\rho})$ belongs to $\mathcal{X}_{\Sigma_{\mathcal{D}}}^s(\mathcal{C}_{B_{\rho}^+})$ and

\begin{equation*}
\frac{\|v_{\rho}\|_{\mathcal{X}_{\Sigma_{\mathcal{D}}}^s(\mathcal{C}_{B_{\rho}^+})}^2}{\|v_{\rho}(x,0)\|_{L^{2_s^*}(B_{\rho}^+)}^2}
\leq
C_{\rho}\frac{\|w_{\rho}\|_{\mathcal{X}_{\Sigma_{\mathcal{D}}}^s(\mathcal{C}_{\Omega_{\rho}})}^2}{\|u_{\rho}\|_{L^{2_s^*}(\Omega_{\rho})}^2},
\end{equation*}
where $C_{\rho}$ depends on the diffeomorfism $T_{\rho}$ and, by the definition of regular point, it can be chosen in such a way that
$C_{\rho}\to1$ as $\rho\to 0$. Then, for $\rho$ small enough, by \eqref{non} we have

\begin{equation*}
\inf\limits_{\substack{w\in
\mathcal{X}_{\Sigma_{\mathcal{D}}}^s(\mathcal{C}_{B_{\rho}^+})\\
w\not\equiv
0}}\frac{\|w_{\rho}\|_{\mathcal{X}_{\Sigma_{\mathcal{D}}}^s(\mathcal{C}_{B_{\rho}^+})}^2}{\|u_{\rho}\|_{L^{2_s^*}(B_{\rho}^+)}^2}<S_{0}(B_{1}^+),
\end{equation*}
which is a contradiction because, due to the invariance under scaling, we have
\begin{equation*}
\inf\limits_{\substack{w\in
\mathcal{X}_{\Sigma_{\mathcal{D}}}^s(\mathcal{C}_{B_{\rho}^+})\\
w\not\equiv
0}}\frac{\|w_{\rho}\|_{\mathcal{X}_{\Sigma_{\mathcal{D}}}^s(\mathcal{C}_{B_{\rho}^+})}^2}{\|u_{\rho}\|_{L^{2_s^*}(B_{\rho}^+)}^2}=\inf\limits_{\substack{w\in
\mathcal{X}_{\Sigma_{\mathcal{D}}}^s(\mathcal{C}_{B_{1}^+})\\
w\not\equiv 0}}
\frac{\|w_{\rho}\|_{\mathcal{X}_{\Sigma_{\mathcal{D}}}^s(\mathcal{C}_{B_{1}^+})}^2}{\|u_{\rho}\|_{L^{2_s^*}(B_{1}^+)}^2}=S_{0}(B_{1}^+).
\end{equation*}
 Finally, by \eqref{est1}-\eqref{est2} in Lemma \ref{lemma_est} it follows that $S_{0}(B_{1}^+)=2^{\frac{-2s}{N}}\kappa_sS(s,N)$ and hence $\Theta(x_0)\geq2^{\frac{-2s}{N}}\kappa_sS(s,N)$.
\end{proof}

\begin{proof}[Proof of Proposition \ref{const_neu}]
As a consequence of the previous Lemmata we get that if
$\Sigma_{\mathcal{N}}$ is a regular submanifold of $\partial\Omega$
then
$\widetilde{S}(\Sigma_{\mathcal{N}})=2^{\frac{-2s}{N}}\kappa_sS(s,N)$.
\end{proof}

\

We now turn our attention to the Sobolev constant relative to the
Dirichlet part of the boundary
$\widetilde{S}(\Sigma_{\mathcal{D}})$. We give an estimate for
$\widetilde{S}(\Sigma_{\mathcal{D}})$ similar to that of
$\widetilde{S}(\Sigma_{\mathcal{N}})$ in Proposition
\ref{const_neu}.

\begin{proposition}\label{const_dir}
$\widetilde{S}(\Sigma_{\mathcal{D}})\leq 2^{\frac{-2s}{N}}\kappa_sS(s,N)$.
\end{proposition}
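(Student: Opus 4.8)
The goal is the upper bound $\widetilde{S}(\Sigma_{\mathcal{D}})\leq 2^{-2s/N}\kappa_s S(s,N)$, and since $\widetilde{S}(\Sigma_{\mathcal{D}})$ is defined as an infimum over $w\in\mathcal{X}_{\Sigma_{\mathcal{D}}}^s(\mathcal{C}_\Omega)$ of the Rayleigh quotient $\|w\|^2_{\mathcal{X}^s_{\Sigma_{\mathcal{D}}}}/\|w(\cdot,0)\|^2_{L^{2_s^*}}$, it suffices to exhibit an admissible family of test functions along which the quotient converges to $2^{-2s/N}\kappa_s S(s,N)$. The natural choice, exactly as in the proof of Lemma \ref{exactly}, is to take a point $x_0\in\Sigma_{\mathcal{N}}$, the extremal extension family $w_\varepsilon=E_s[u_\varepsilon]$, and the truncated family $\psi_\rho w_\varepsilon$ with the cut-off $\psi_\rho(x,y)=\phi_0(\overline r_{xy}/\rho)$ centered at $x_0$. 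The key observation making these admissible for the $\widetilde{S}(\Sigma_{\mathcal{D}})$-quotient is that, for $\rho$ small, $\operatorname{supp}(\psi_\rho)\cap\partial\Omega\subset\Sigma_{\mathcal{N}}$ (since $x_0\in\Sigma_{\mathcal{N}}$, which is open in $\partial\Omega$, being complementary to the closed set $\Sigma_{\mathcal{D}}$), so $\psi_\rho w_\varepsilon$ vanishes near $\Sigma_{\mathcal{D}}$ and hence lies in $\mathcal{X}^s_{\Sigma_{\mathcal{D}}}(\mathcal{C}_\Omega)$.

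The computation is then essentially a repetition of the one in Lemma \ref{exactly}: choosing $\varepsilon=\rho^\alpha$ with $\alpha>1$, estimates \eqref{est1_ro}--\eqref{est2_ro} give
$$
\|\psi_\rho w_\rho\|^2_{\mathcal{X}^s_{\Sigma_{\mathcal{D}}}(\mathcal{C}_\Omega)}=\kappa_s\!\!\int_{\mathcal{C}_\Omega}\!\!y^{1-2s}|\nabla w_\rho|^2\,dxdy\,\big(1+o(1)\big),
$$
but since $x_0\in\partial\Omega$ and $\Sigma_{\mathcal{N}}$ is a regular submanifold, \eqref{proportion} holds, so only (asymptotically) half of the ball contributes to both the gradient integral and the $L^{2_s^*}$ integral:
$$
\frac{\|\psi_\rho w_\rho\|^2_{\mathcal{X}^s_{\Sigma_{\mathcal{D}}}(\mathcal{C}_\Omega)}}{\|\psi_\rho u_\rho\|^2_{L^{2_s^*}(\Omega)}}=\frac{\tfrac12\,\kappa_s\!\int_{\mathbb{R}^{N+1}_+}\!y^{1-2s}|\nabla w_\rho|^2+O(\rho^{(\alpha-1)(N-2s)})}{\big(\tfrac12\big)^{2/2_s^*}\|u_\rho\|^2_{L^{2_s^*}(\mathbb{R}^N)}+O(\rho^{(\alpha-1)N})}\longrightarrow 2^{-2s/N}\kappa_s S(s,N)
$$
as $\rho\to 0$, using $1-\tfrac{2}{2_s^*}=\tfrac{2s}{N}$ and the fact that $w_\varepsilon$ is a minimizer for $S(s,N)$. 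Taking the infimum over $w$ on the left gives $\widetilde{S}(\Sigma_{\mathcal{D}})\leq 2^{-2s/N}\kappa_s S(s,N)$.

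The only point requiring a little care — and the main (mild) obstacle — is the admissibility claim: one must be sure that for $\rho$ small the support of $\psi_\rho$ meets $\partial\Omega$ only inside $\Sigma_{\mathcal{N}}$, so that the test function genuinely belongs to $\mathcal{X}^s_{\Sigma_{\mathcal{D}}}(\mathcal{C}_\Omega)$ and not merely to the larger Neumann-type space. This is exactly where the hypothesis that $\Sigma_{\mathcal{D}}$ is closed (equivalently, $\Sigma_{\mathcal{N}}$ open) in $\partial\Omega$ enters: the distance from $x_0$ to $\Sigma_{\mathcal{D}}$ is positive, so any $\rho$ below that distance works. Everything else is a verbatim reuse of Lemma \ref{lemma_est} and the argument of Lemma \ref{exactly}; in fact the inequality $\widetilde{S}(\Sigma_{\mathcal{D}})\leq S_0(B_1^+)=2^{-2s/N}\kappa_s S(s,N)$ can be read off directly by restricting those test functions supported near $x_0\in\Sigma_{\mathcal{N}}$.
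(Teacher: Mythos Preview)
Your proposal is correct and follows essentially the same approach as the paper: the paper's proof simply says to use the extremal functions of the Sobolev inequality and proceed as in Proposition~\ref{const_neu} (i.e., the upper-bound half of Lemma~\ref{exactly}), which is precisely the test-function computation you carry out with $\psi_\rho w_\varepsilon$ centered at a point $x_0\in\Sigma_{\mathcal{N}}$. Your explicit remark on admissibility (that $\Sigma_{\mathcal{D}}$ is closed so the support avoids it for small $\rho$) makes the argument slightly more detailed than the paper's sketch, but the route is the same.
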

\begin{proof}
To obtain this estimate we use the extremal functions of the Sobolev
inequality and proceed in a similar way as in Proposition
\ref{const_neu}. The lower bound in Proposition \ref{const_neu} is
due to the fact that the infimum
$\widetilde{S}(\Sigma_{\mathcal{N}})$ is taken in the set
  $\Omega_{\rho}(x_0)$, on the contrary, for the constant $\widetilde{S}(\Sigma_{\mathcal{D}})$, we do not have such a lower bound by the very
  definition of $\widetilde{S}(\Sigma_{\mathcal{D}})$.
\end{proof}

\section{Proof of main results}\label{sec:4}
\subsection{Proof of Theorem \ref{th_existencia}.$(1)$-$(2)$}\label{subsec:4.1}

\

In this subsection we carry out the proof of Theorems \ref{th1},
\ref{th_att} and \ref{th_dirichlet} which will be useful in the
proof of Theorem \ref{th_existencia}.$(1)$-$(2)$.

We begin with the upper bound of the parameter $\lambda$, i.e.,
statement $(1)$ in Theorem \ref{th_existencia}.
\begin{lemma}\label{eigenf}
Problem $(P_\lambda)$ has no  solution for
$\lambda\geq\lambda_{1,s}$, with $\lambda_{1,s}$ the first
eigenvalue of $(-\Delta)^s$ with mixed boundary condition.
\end{lemma}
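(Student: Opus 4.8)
The natural approach is the standard eigenfunction test. Let $\varphi_1 > 0$ be the first eigenfunction of $(-\Delta)^s$ with mixed Dirichlet-Neumann boundary conditions \eqref{mixed}, so that $\varphi_1 \in H^s_{\Sigma_{\mathcal{D}}}(\Omega)$ and $(-\Delta)^s\varphi_1 = \lambda_{1,s}\varphi_1$ with $\varphi_1 > 0$ in $\Omega$ (positivity of the first eigenfunction follows from the spectral construction in Section \ref{sec:2} together with a maximum principle for the extension problem). Suppose, for contradiction, that $u \in H^s_{\Sigma_{\mathcal{D}}}(\Omega)$ is a positive solution of $(P_\lambda)$ for some $\lambda \geq \lambda_{1,s}$. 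The plan is to test the weak formulation \eqref{energy_sol} with $\psi = \varphi_1$ and to test the (weak) eigenvalue equation for $\varphi_1$ with $u$, then subtract.

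Concretely, using the symmetry of the bilinear form $\int_\Omega (-\Delta)^{s/2}u\,(-\Delta)^{s/2}\varphi_1\,dx$ — which is legitimate since both $u$ and $\varphi_1$ lie in $H^s_{\Sigma_{\mathcal{D}}}(\Omega)$ and the fractional operator is self-adjoint in this space by the spectral decomposition — one obtains
\begin{equation*}
\lambda_{1,s}\int_\Omega u\,\varphi_1\,dx = \int_\Omega (-\Delta)^{s/2}u\,(-\Delta)^{s/2}\varphi_1\,dx = \lambda\int_\Omega u\,\varphi_1\,dx + \int_\Omega u^{2_s^*-1}\varphi_1\,dx.
\end{equation*}
Rearranging gives $(\lambda_{1,s} - \lambda)\int_\Omega u\,\varphi_1\,dx = \int_\Omega u^{2_s^*-1}\varphi_1\,dx$. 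Since $u > 0$ and $\varphi_1 > 0$ in $\Omega$, the right-hand side is strictly positive, while the left-hand side is $\leq 0$ because $\lambda \geq \lambda_{1,s}$ and $\int_\Omega u\,\varphi_1\,dx > 0$. This contradiction proves the lemma.

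The only real point requiring care is the justification that $\varphi_1$ is an admissible test function and that the integration by parts identity $\int_\Omega (-\Delta)^{s/2}u\,(-\Delta)^{s/2}\varphi_1 = \lambda_{1,s}\int_\Omega u\,\varphi_1$ holds; this is where the mixed boundary conditions enter, but it follows directly from the definition of $H^s_{\Sigma_{\mathcal{D}}}(\Omega)$ via the spectral decomposition \eqref{spectral}: writing $u = \sum_j a_j\varphi_j$ and $\varphi_1$ as the first basis element, both sides equal $\lambda_{1,s}^s a_1$ (after the obvious normalization), so no boundary terms appear. Equivalently one may work in the extension picture $(P_\lambda^*)$, pairing $E_s[u]$ against $E_s[\varphi_1]$ and using that $B^*(\cdot)=0$ on $\partial_L\mathcal{C}_\Omega$ kills the lateral boundary contribution. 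The positivity of $\varphi_1$ — needed to conclude that $\int_\Omega u\varphi_1\,dx>0$ and that $\int_\Omega u^{2_s^*-1}\varphi_1\,dx>0$ — is the other ingredient; I expect the paper to invoke it as standard, since it follows from the Caffarelli-Silvestre extension and Harnack/maximum principle arguments already available in the references. No delicate estimate is involved, so there is no genuine "main obstacle" here beyond bookkeeping the weak formulation correctly.
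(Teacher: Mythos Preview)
Your proof is correct and follows exactly the same approach as the paper: test the weak formulation of $(P_\lambda)$ with the positive first eigenfunction $\varphi_1$, use the self-adjointness of the bilinear form to obtain $\lambda_{1,s}\int_\Omega u\varphi_1 = \lambda\int_\Omega u\varphi_1 + \int_\Omega u^{2_s^*-1}\varphi_1$, and conclude from positivity that $\lambda<\lambda_{1,s}$. Your additional remarks on admissibility of $\varphi_1$ and the spectral justification are more detailed than what the paper writes, but the argument is the same.
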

\begin{proof}
Assume that $u$ is solution to $(P_\lambda)$ and let $\varphi_1$ be
a positive first eigenfunction of $(-\Delta)^s$. Taking $\varphi_1$
as a test function for $(P_{\lambda})$ we obtain
\begin{equation*}
\lambda_{1,s}\int_{\Omega}u\varphi_1dx=\int_{\Omega}(-\Delta)^{\frac{s}{2}}u(-\Delta)^{\frac{s}{2}}\varphi_1dx=\int_{\Omega}\left(\lambda u+u^{2_s^*-1}\right)\varphi_1dx>\lambda\int_{\Omega}u\varphi_1dx.
\end{equation*}
Therefore, $\lambda<\lambda_{1,s}$.
\end{proof}

\begin{proposition}\label{proplambda}
Assume that $0<\lambda<\lambda_{1,s}$. Then
$S_{\lambda}(\Omega)<2^{\frac{-2s}{N}}\kappa_sS(s,N)$
$=\widetilde{S}(\Sigma_{\mathcal{N}})$.
\end{proposition}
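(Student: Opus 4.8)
The plan is to show that the infimum $S_\lambda(\Omega)$ is strictly below the threshold $2^{-2s/N}\kappa_s S(s,N)$ by exhibiting an explicit test function concentrating at a regular point $x_0 \in \Sigma_{\mathcal{N}}$ of the Neumann boundary, and using the lower-order term $-\lambda\|u\|_{L^2}^2$ to beat the borderline value. This is exactly the fractional, mixed-boundary analogue of the classical Brezis--Nirenberg computation. The half-ball geometry at $x_0$ is responsible for the factor $2^{-2s/N}$, and the perturbation $\lambda>0$ provides the strict gain.

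\textbf{Main steps.} First I would fix a regular point $x_0 \in \Sigma_{\mathcal{N}}$ and work with the truncated extremals $\psi_\rho w_\varepsilon$ (cut-off centered at $x_0$), as in the proof of Lemma~\ref{exactly}, so that $\psi_\rho u_\varepsilon$ vanishes on $\partial\Omega\cap\overline\Omega$ where needed and the half-ball proportionality \eqref{proportion} holds. Second, I would insert $\psi_\rho w_\varepsilon$ into the quotient $Q_\lambda$ and use the estimates \eqref{est1}, \eqref{est2} of Lemma~\ref{lemma_est} together with \eqref{proportion} to get
$$
Q_\lambda(\psi_\rho w_\varepsilon)\le 2^{\frac{-2s}{N}}\,\frac{\kappa_sS(s,N) + O\big((\varepsilon/\rho)^{N-2s}\big) - \lambda\,\|\psi_\rho u_\varepsilon\|_{L^2(\Omega)}^2}{1+O\big((\varepsilon/\rho)^N\big)}.
$$
Third — the heart of the matter — I would estimate $\|\psi_\rho u_\varepsilon\|_{L^2(\Omega)}^2$ from below. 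Since $\Omega$ occupies asymptotically half a ball near $x_0$ and $u_\varepsilon(x)=\varepsilon^{-(N-2s)/2}u_1(x/\varepsilon)$, a change of variables gives, for $N>4s$, a lower bound of order $\varepsilon^{2s}$ (coming from $\int_{\mathbb{R}^N}u_1^2<\infty$ when $N>4s$), and for $N=4s$ a lower bound of order $\varepsilon^{2s}|\log\varepsilon|$. Fourth, I would choose $\varepsilon = \rho^\alpha$ with $\alpha>1$ so that the error terms $O(\rho^{(\alpha-1)(N-2s)})$ are negligible compared to the gain $\lambda\varepsilon^{2s}$ (this is possible: the gain is $\sim\rho^{2s\alpha}$ while the error is $\sim\rho^{(\alpha-1)(N-2s)}$, and for $\alpha$ close enough to $1$ the gain dominates since $2s\alpha < (\alpha-1)(N-2s)$ fails for $\alpha$ near $1$ — so more carefully one fixes $\rho$ small and lets $\varepsilon\to 0$ afterwards, or balances the two exponents). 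Taking $\rho$ small and then $\varepsilon$ small yields $Q_\lambda(\psi_\rho w_\varepsilon) < 2^{-2s/N}\kappa_s S(s,N)$, hence $S_\lambda(\Omega)$ is strictly below the threshold. Finally, I would invoke Proposition~\ref{const_neu} to identify the threshold with $\widetilde S(\Sigma_{\mathcal{N}})$.

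\textbf{Main obstacle.} The delicate point is the bookkeeping of the two small parameters $\varepsilon$ and $\rho$: the cut-off errors in \eqref{est1}--\eqref{est2} are controlled only in terms of $\varepsilon/\rho$, while the beneficial term $-\lambda\|u_\varepsilon\|_{L^2}^2$ is of a fixed power of $\varepsilon$, so one must verify that there is a regime (e.g. first fix $\rho$ small so that $\psi_\rho u_\varepsilon$ is an admissible competitor and the half-ball approximation is good, then send $\varepsilon\to 0$) in which the $\lambda$-gain strictly dominates all error terms. The restriction $N\ge 4s$ enters precisely here, exactly as $N\ge 4$ does in the classical Brezis--Nirenberg problem: for $N=4s$ one needs the logarithmic refinement of the $L^2$-estimate, and the borderline case must be handled separately. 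The regularity of $x_0\in\Sigma_{\mathcal{N}}$ is used only to guarantee \eqref{proportion} and that the truncated function sits in the right space; everything else is a routine adaptation of Lemma~\ref{lemma_est} and the computations in Lemma~\ref{exactly}.
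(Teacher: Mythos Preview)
Your proposal is correct and follows essentially the same route as the paper: test with a truncated Sobolev extremal centered at a point of $\Sigma_{\mathcal N}$, use the expansions \eqref{est1}--\eqref{est2} together with the half-ball proportionality to produce the factor $2^{-2s/N}$, and then exploit the $L^2$ lower bound (of order $\varepsilon^{2s}$ for $N>4s$, and $\varepsilon^{2s}\log(1/\varepsilon)$ for $N=4s$) to pull the quotient strictly below the threshold. The only difference is one of economy: the paper simply fixes the cut-off radius $r$, quotes the $L^2$ asymptotics \eqref{est3} from \cite{BCdPS}, and sends $\varepsilon\to 0$, thereby sidestepping the two-parameter bookkeeping that you (rightly) flag as the delicate point; your suggested resolution ``fix $\rho$ small, then let $\varepsilon\to 0$'' is exactly what the paper does implicitly.
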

\begin{proof}
We recall the following asymptotic identities given in \cite[Lemma
3.8]{BCdPS},
\begin{equation}\label{est3}
        \|\phi_r u_{\varepsilon}\|_{L^2(\Omega)}^2=\left\{
        \begin{tabular}{lr}
        $C\varepsilon^{2s}+O(\varepsilon^{N-2s})$ & if $N>4s$, \\
        $C\varepsilon^{2s}\log(1/\varepsilon)+O(\varepsilon^{2s})$  & if $N=4s$, \\
        \end{tabular}
        \right.
\end{equation}
for some constant $C>0$, $\varepsilon$ small enough and $\phi_r$ a
cut-off function similar to the one in Lemma \ref{lemma_est}.
Proceeding in a similar way as in Proposition \ref{const_neu}, we
take  a cut-off function centered at a point
$x_0\in\overline{\Sigma}_{\mathcal{N}}$, then using
\eqref{est1}-\eqref{est2} and \eqref{est3} jointly, we have the
following:
\begin{itemize}
\item If $N>4s$,
\begin{align*}
Q_{\lambda}(\phi_r w_{\varepsilon})&\leq2^{\frac{-2s}{N}}\frac{\kappa_sS(s,N)-\lambda C \varepsilon^{2s}\|u_{\varepsilon}\|_{L^{2_s^*}(\Omega)}^{-2}+O(\varepsilon^{N-2s}) }{1+O(\varepsilon^N)}\\
&\leq2^{\frac{-2s}{N}}\kappa_sS(s,N)-\lambda C \varepsilon^{2s}\|u_{\varepsilon}\|_{L^{2_s^*}(\Omega)}^{-2}+O(\varepsilon^{N-2s})\\
&<2^{\frac{-2s}{N}}\kappa_sS(s,N).
\end{align*}
\item If $N=4s$ a similar procedure proves that for
$\varepsilon$ small enough,
\begin{equation*}
Q_{\lambda}(\phi_r
w_{\varepsilon})\leq2^{\frac{-2s}{N}}\kappa_sS(s,N)-\lambda C
\varepsilon^{2s}\log(1/\varepsilon)\|u_{\varepsilon}\|_{L^{2_s^*}(\Omega)}^{-2}+O(\varepsilon^{2s})<2^{\frac{-2s}{N}}\kappa_sS(s,N).
\end{equation*}
\end{itemize}
\end{proof}
Now we enunciate a concentration-compactness result adapted to our
fractional setting with mixed boundary conditions. The proof is a
minor variation of that of the concentration-compactness result in
\cite[Theorem 5.1]{BCdPS}, which is an adaptation to the fractional
setting with Dirichlet boundary conditions of the classical
concentration-compactness technique of P.L. Lions, \cite{Lions}. For
the mixed boundary data case involving the classical Laplace
operator and Caffarelli-Kohn-Nirenberg weights, \cite{CKN}, a
concentration-compactness theorem was proved in \cite{ACP}. First,
we recall the concept of a tight sequence.
\begin{definition}
We say that a sequence $\{y^{1-2s}|\nabla
w_n|^2\}_{n\in\mathbb{N}}\subset L^1(\mathcal{C}_{\Omega})$ is tight
if for any $\eta>0$ there exists $\rho>0$ such that
\begin{equation}\label{tight}
\int_{\{y>\rho\}}\int_{\Omega}y^{1-2s}|\nabla w_n|^2dxdy\leq\eta,\ \forall n\in\mathbb{N}.
\end{equation}
\end{definition}

\begin{theorem}[Concentration-Compactness]\label{concompact}
Let
$\{w_n\}\subset\mathcal{X}_{\Sigma_{\mathcal{D}}}^s(\mathcal{C}_{\Omega})$
be a weakly convergent sequence to $w$ in
$\mathcal{X}_{\Sigma_{\mathcal{D}}}^s(\mathcal{C}_{\Omega})$ such
that $\{y^{1-2s}|\nabla w_n|^2\}_{n\in\mathbb{N}}$ is tight. Let us
denote $u_n=Tr[w_n]$, $u=Tr[w]$ and let $\mu,\nu$ be two nonnegative
measures such that

\begin{equation}\label{limitcompactness}
y^{1-2s}|\nabla w_n|^2\to\mu,\ \text{and}\ |u_n|^{2_s^*}\to\nu,
\end{equation}
in the sense of measures. Then, there exist an at most countable set
$I$ and points $\{x_i\}_{i\in I}\subset\overline{\Omega}$ such that
\begin{enumerate}
\item $\nu=|u|^{2_s^*}+\sum\limits_{i\in I}\nu_{i}\delta_{x_{i}},\ \nu_{i}>0,$
\item $\mu=y^{1-2s}|\nabla w|^2+\sum\limits_{i\in I}\mu_{i}\delta_{x_{i}},\ \mu_{i}>0,$
\item $\mu_i\geq \widetilde{S}(\Sigma_{\mathcal{D}})\nu_i^{\frac{2}{2_s^*}}$.
\end{enumerate}
\end{theorem}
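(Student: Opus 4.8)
The plan is to adapt P.L. Lions' concentration-compactness principle \cite{Lions} to the Caffarelli-Silvestre extension, following closely \cite[Theorem 5.1]{BCdPS} for the Dirichlet case and \cite{ACP} for the mixed case with the classical Laplacian, while pointing out the modifications required by the spectral fractional operator with mixed boundary data. Set $v_n:=w_n-w$, so that $v_n\rightharpoonup0$ in $\mathcal{X}_{\Sigma_{\mathcal{D}}}^s(\mathcal{C}_\Omega)$ and hence $\nabla v_n\rightharpoonup0$ in $L^2(\mathcal{C}_\Omega,y^{1-2s}\,dxdy)$. Since $\frac12<s<1$, the weight $y^{1-2s}$ belongs to the Muckenhoupt class $A_2$, so on every set $K\times(0,R)$ with $K\subset\subset\overline\Omega$ the embedding of $\mathcal{X}_{\Sigma_{\mathcal{D}}}^s(\mathcal{C}_\Omega)$ into $L^2(y^{1-2s}\,dxdy)$ is compact and the trace operator into $L^2(\Omega)$ is locally compact; thus, passing to a subsequence, $v_n\to0$ strongly in $L^2_{\mathrm{loc}}(\mathcal{C}_\Omega,y^{1-2s})$ and $u_n:=Tr[w_n]\to u:=Tr[w]$ strongly in $L^2_{\mathrm{loc}}(\Omega)$ and a.e. in $\Omega$. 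Introduce the defect measures $\overline\nu:=\nu-|u|^{2_s^*}$ and $\overline\mu:=\mu-y^{1-2s}|\nabla w|^2$; these are nonnegative, by the Brezis-Lieb lemma applied to $|u_n|^{2_s^*}$ and by the identity $\int\psi\,y^{1-2s}|\nabla w_n|^2-\int\psi\,y^{1-2s}|\nabla v_n|^2\to\int\psi\,y^{1-2s}|\nabla w|^2$ for $\psi\in C_c(\overline{\mathcal{C}_\Omega})$ (the cross term vanishing because $\nabla v_n\rightharpoonup0$). The tightness hypothesis is precisely what guarantees that $\mu$ captures the full limiting mass $\lim_n\kappa_s^{-1}\|w_n\|^2_{\mathcal{X}_{\Sigma_{\mathcal{D}}}^s(\mathcal{C}_\Omega)}$, i.e. that no energy escapes as $y\to+\infty$.

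The core of the argument is the \emph{reverse H\"older inequality} between $\overline\mu$ and $\overline\nu$. Fix $\phi\in C^\infty(\overline{\mathcal{C}_\Omega})$ with compact support. Since $v_n$ vanishes on $\Sigma_{\mathcal{D}}^*$, so does $\phi v_n$, whence $\phi v_n\in\mathcal{X}_{\Sigma_{\mathcal{D}}}^s(\mathcal{C}_\Omega)$, and the definition of $\widetilde{S}(\Sigma_{\mathcal{D}})$ together with \eqref{eq:traceineq} gives
\[
\widetilde{S}(\Sigma_{\mathcal{D}})\left(\int_\Omega|\phi(x,0)u_n(x)-\phi(x,0)u(x)|^{2_s^*}\,dx\right)^{\!\frac{2}{2_s^*}}\le\int_{\mathcal{C}_\Omega}y^{1-2s}|\nabla(\phi v_n)|^2\,dxdy.
\]
Expanding $\nabla(\phi v_n)=\phi\nabla v_n+v_n\nabla\phi$, the contributions carrying the factor $v_n\nabla\phi$---namely $\int y^{1-2s}v_n^2|\nabla\phi|^2$ and the cross term, estimated by Cauchy-Schwarz---tend to $0$, since $v_n\to0$ in $L^2_{\mathrm{loc}}(y^{1-2s})$ while $\{\nabla v_n\}$ stays bounded. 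Passing to the limit in the sense of measures and using the Brezis-Lieb decomposition on the left, we arrive at
\[
\widetilde{S}(\Sigma_{\mathcal{D}})\left(\int_{\overline\Omega}|\phi(x,0)|^{2_s^*}\,d\overline\nu\right)^{\!\frac{2}{2_s^*}}\le\int_{\overline{\mathcal{C}_\Omega}}|\phi|^2\,d\overline\mu\qquad\text{for every such }\phi.
\]

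To conclude I would invoke the elementary lemma on reverse H\"older inequalities between measures, \cite[Lemma I.2]{Lions}: an inequality of the form above, valid for all $\phi$, forces $\overline\nu$ to be a countable sum of Dirac masses $\overline\nu=\sum_{i\in I}\nu_i\delta_{x_i}$ with $\nu_i>0$ and $I$ at most countable (finiteness of $\overline\nu$ bounds the number of atoms exceeding any fixed size), carried by $\Omega\times\{0\}$, so that $\{x_i\}\subset\overline\Omega$. Testing the inequality with functions concentrating at $x_i$ yields $\mu_i:=\overline\mu(\{x_i\})\ge\widetilde{S}(\Sigma_{\mathcal{D}})\,\nu_i^{2/2_s^*}$, which is assertion (3). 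Substituting the structure of $\overline\nu$ into $\nu=|u|^{2_s^*}+\overline\nu$ gives (1), while $\mu=y^{1-2s}|\nabla w|^2+\overline\mu$ with $\overline\mu\ge\sum_{i\in I}\mu_i\delta_{x_i}$ gives (2); the equality in (2)---that is, the absence of concentration of $\overline\mu$ at interior points of $\mathcal{C}_\Omega$ not detected by the trace---is obtained exactly as in \cite{BCdPS}, from interior regularity estimates for the degenerate operator $-\mathrm{div}(y^{1-2s}\nabla\,\cdot\,)$.

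I expect the main obstacle to be the careful bookkeeping of the mixed boundary conditions. One must check that localization preserves admissibility, $\phi v_n$ still vanishing on $\Sigma_{\mathcal{D}}^*$, and---more delicately---that the constant surviving the localized trace inequality is exactly $\widetilde{S}(\Sigma_{\mathcal{D}})$; this is forced because the global Sobolev-type quotient over $\mathcal{X}_{\Sigma_{\mathcal{D}}}^s(\mathcal{C}_\Omega)$ has infimum $\widetilde{S}(\Sigma_{\mathcal{D}})$ (see the reformulation in the Remark following its definition). Atoms located on $\Sigma_{\mathcal{D}}$, on $\Sigma_{\mathcal{N}}$, on the interface $\Gamma$, or in the interior of $\Omega$ are all admissible, since the conclusion $\{x_i\}\subset\overline\Omega$ does not distinguish among them. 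The other ingredient to be made explicit is the weighted Rellich-type compactness for the degenerate/singular weight $y^{1-2s}$, which is classical $A_2$-Muckenhoupt theory (Fabes-Kenig-Serapioni) and should be cited.
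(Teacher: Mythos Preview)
Your proposal is correct and follows exactly the route the paper indicates: the paper does not give a detailed proof of this theorem but simply states that it is ``a minor variation of that of the concentration-compactness result in \cite[Theorem 5.1]{BCdPS}'', citing also \cite{Lions} and \cite{ACP}, which is precisely the adaptation you carry out. Your outline in fact supplies more detail than the paper itself---the reverse H\"older inequality via localization by $\phi v_n\in\mathcal{X}_{\Sigma_{\mathcal{D}}}^s(\mathcal{C}_\Omega)$, the Brezis--Lieb decomposition, Lions' lemma on atomic measures, and the $A_2$-Muckenhoupt compactness---all of which are the standard ingredients of \cite[Theorem 5.1]{BCdPS}, with the only modification being that the optimal constant is now $\widetilde{S}(\Sigma_{\mathcal{D}})$ rather than $\kappa_s S(s,N)$, exactly as you note.
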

Using Theorem \ref{concompact} we prove the next result that is
analogous to \cite[Theorem 2.2]{LPT}.
\begin{theorem}\label{CCA}
Let $w_m$ be a minimizing sequence of $S_{\lambda}(\Omega)$. Then
either $w_m$ is re\-la\-ti\-vely compact or the weak limit, $w\equiv
0$. Even more, in the latter case there exist a subsequence $w_{m}$
and a point $x_0\in\overline{\Sigma}_{\mathcal{N}}$ such that
\begin{equation}\label{acumulacion}
y^{1-2s}|\nabla w_m|^2\to S_{\lambda}(\Omega)\delta_{x_0},\ \text{and}\ |u_m|^{2_s^*}\to\delta_{x_0},
\end{equation}
with $u_m=Tr[w_m]$.
\end{theorem}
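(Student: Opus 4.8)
The plan is to normalize the minimizing sequence, apply the concentration-compactness result (Theorem \ref{concompact}), and then use the subcriticality estimate $S_\lambda(\Omega) < \widetilde S(\Sigma_{\mathcal N}) = 2^{-2s/N}\kappa_s S(s,N)$ (Proposition \ref{proplambda}) together with the geometric meaning of $\widetilde S(\Sigma_{\mathcal N})$ to rule out interior concentration and concentration on the Dirichlet part, leaving only the two dichotomous alternatives. First I would take a minimizing sequence $\{w_m\}$ for $S_\lambda(\Omega)$ normalized so that $\|u_m\|_{L^{2_s^*}(\Omega)}=1$; then $\|w_m\|_{\mathcal X^s_{\Sigma_{\mathcal D}}(\mathcal C_\Omega)}^2 - \lambda\|u_m\|_{L^2(\Omega)}^2 \to S_\lambda(\Omega)$. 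Since $\lambda<\lambda_{1,s}$ the quadratic form is coercive (equivalently $\|w_m\|^2_{\mathcal X}$ is bounded, using the eigenvalue gap and the compact embedding $H^s_{\Sigma_{\mathcal D}}(\Omega)\hookrightarrow L^2(\Omega)$), so up to a subsequence $w_m\rightharpoonup w$ weakly in $\mathcal X^s_{\Sigma_{\mathcal D}}(\mathcal C_\Omega)$, $u_m\to u$ strongly in $L^2(\Omega)$, and $u_m\to u$ a.e. One must first check the tightness hypothesis \eqref{tight}: this follows from the decay in the $y$-direction of functions in $\mathcal X^s_{\Sigma_{\mathcal D}}(\mathcal C_\Omega)$, exactly as in \cite[Theorem 5.1]{BCdPS}, so Theorem \ref{concompact} applies and yields measures $\mu,\nu$ with the stated structure and atoms $\{x_i\}_{i\in I}\subset\overline\Omega$.

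Next I would pass to the limit in the quotient. Writing $1 = \nu(\overline\Omega) = \|u\|_{L^{2_s^*}}^{2_s^*} + \sum_i \nu_i$ and $S_\lambda(\Omega) = \lim(\|w_m\|^2_{\mathcal X}-\lambda\|u_m\|_{L^2}^2) = \mu(\overline\Omega) - \lambda\|u\|_{L^2}^2 \ge \|w\|^2_{\mathcal X} - \lambda\|u\|_{L^2}^2 + \sum_i\mu_i$, and using $\mu_i\ge \widetilde S(\Sigma_{\mathcal D})\nu_i^{2/2_s^*}$ together with $\|w\|^2_{\mathcal X}-\lambda\|u\|^2_{L^2}\ge S_\lambda(\Omega)\|u\|^2_{L^{2_s^*}} = S_\lambda(\Omega)(\nu(\overline\Omega)-\sum_i\nu_i)^{2/2_s^*}$, one obtains a numerical inequality forcing a dichotomy on the "mass distribution" among the number $\|u\|^{2_s^*}_{L^{2_s^*}}$ and the $\nu_i$'s. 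The crucial point is the \emph{location} of the atoms: if $x_i\in\Omega$ (interior) or $x_i\in\Sigma_{\mathcal D}$, a local version of the argument of Lemma \ref{exactly} gives the sharper bound $\mu_i\ge \kappa_s S(s,N)\nu_i^{2/2_s^*}$ in the interior and $\mu_i \ge \widetilde S(\Sigma_{\mathcal D}^{\mathrm{loc}})\nu_i^{2/2_s^*}$ near $\Sigma_{\mathcal D}$, in either case a constant $\ge 2^{-2s/N}\kappa_sS(s,N) > S_\lambda(\Omega)$; whereas near an interior regular Neumann point the best local constant is precisely $2^{-2s/N}\kappa_sS(s,N)$, still strictly above $S_\lambda(\Omega)$. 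The only place where the local Sobolev constant can drop down to $S_\lambda(\Omega)$ itself is the \emph{transition set} $\overline\Sigma_{\mathcal N}\cap\overline\Sigma_{\mathcal D}=\Gamma$, more precisely a point of $\overline\Sigma_{\mathcal N}$. Combining these, if any atom exists, the inequality can only be balanced when $u\equiv 0$, there is exactly one atom, $\nu=\delta_{x_0}$ with $\nu_{x_0}=1$, $\mu = S_\lambda(\Omega)\delta_{x_0}$, and $x_0\in\overline\Sigma_{\mathcal N}$; this is \eqref{acumulacion}. If no atom exists then $\nu=|u|^{2_s^*}$, so $u_m\to u$ strongly in $L^{2_s^*}(\Omega)$, and then $\|w\|^2_{\mathcal X}-\lambda\|u\|^2_{L^2}\le S_\lambda(\Omega)\|u\|^2_{L^{2_s^*}}=S_\lambda(\Omega)$; combined with weak lower semicontinuity this forces $w$ to be a minimizer and $w_m\to w$ strongly, i.e. relative compactness.

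The main obstacle I anticipate is the careful bookkeeping of the \emph{boundary concentration}: one has to justify the improved lower bounds $\mu_i \ge c\,\nu_i^{2/2_s^*}$ with the correct constant $c$ depending on whether $x_i$ lies in $\Omega$, on $\Sigma_{\mathcal N}$ (regular part), or in the closure of $\Sigma_{\mathcal D}$. This requires localizing around each $x_i$ with a smooth cut-off, rescaling à la \eqref{self}, and identifying the limiting Sobolev-type quotient on a half-space model (with the appropriate mixed boundary data on the lateral boundary of the half-cylinder) — essentially a rerun of the blow-up analysis in Lemma \ref{exactly} and \cite{G}, but now at a generic point of $\overline\Omega$ rather than at a prescribed Neumann point. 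A secondary technical point is verifying tightness \eqref{tight} uniformly along the sequence so that Theorem \ref{concompact} is applicable; this is handled exactly as in \cite[Theorem 5.1]{BCdPS} using the exponential-type decay in $y$ of extensions, and I would simply cite that argument.
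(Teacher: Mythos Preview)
Your overall strategy (normalize, apply Theorem \ref{concompact}, use a concavity/dichotomy inequality, then locate the atom via local Sobolev constants) is essentially the paper's route in its case $S_\lambda(\Omega)=\widetilde S(\Sigma_{\mathcal D})$. The paper, however, organizes the proof differently: it first disposes of the strict case $S_\lambda(\Omega)<\widetilde S(\Sigma_{\mathcal D})$ by a direct Brezis--Nirenberg argument (no concentration-compactness needed), and only invokes Theorem \ref{concompact} in the equality case; the threshold constant throughout is $\widetilde S(\Sigma_{\mathcal D})$, not $\widetilde S(\Sigma_{\mathcal N})$.

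Two points in your plan need repair. First, you lean on Proposition \ref{proplambda} (the strict inequality $S_\lambda(\Omega)<2^{-2s/N}\kappa_sS(s,N)$) to rule out atoms in $\Omega\cup\Sigma_{\mathcal D}$ and even at regular Neumann points. That proposition requires $\lambda>0$, yet Theorem \ref{CCA} must also hold for $\lambda=0$ (it is the engine behind Theorem \ref{th_att}). The paper avoids this by using the \emph{full} interior/Dirichlet local constant $\kappa_s S(s,N)$ and the chain $\kappa_sS(s,N)>2^{-2s/N}\kappa_sS(s,N)\ge \widetilde S(\Sigma_{\mathcal D})=S_\lambda(\Omega)$ from Proposition \ref{const_dir}, which needs no hypothesis on $\lambda$; there is no attempt (and no need) to exclude concentration at regular Neumann points or to single out $\Gamma$. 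Your discussion of $\Gamma$ is speculative and unnecessary for the stated conclusion $x_0\in\overline\Sigma_{\mathcal N}$. Second, tightness is not ``exponential-type decay of extensions'' and is not contained in \cite[Theorem 5.1]{BCdPS} (that is the concentration-compactness theorem itself, which \emph{assumes} tightness). The paper proves tightness by first observing that a (multiple of the) minimizing sequence is a Palais--Smale sequence for the functional $J$ in \eqref{extensionfunctional}, and then running the cutoff-in-$y$ argument of \cite[Lemma 3.6]{BCdPS} (originating in \cite{AAP}), which genuinely uses $J'(\tilde w_m)\to 0$. You should either reproduce that argument or cite it correctly; the decay-of-extensions heuristic does not obviously give a uniform tail bound along a general bounded sequence in $\mathcal X^s_{\Sigma_{\mathcal D}}(\mathcal C_\Omega)$.
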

\begin{proof}
Since $0\le\lambda<\lambda_{1,s}$ it follows that
$0<S_{\lambda}(\Omega)\leq\widetilde{S}(\Sigma_{\mathcal{D}})$. We
distinguish two cases, depending upon if
$S_{\lambda}(\Omega)<\widetilde{S}(\Sigma_{\mathcal{D}})$ or
$S_{\lambda}(\Omega)=\widetilde{S}(\Sigma_{\mathcal{D}})$:

\

\textbf{(1)}
$S_{\lambda}(\Omega)<\widetilde{S}(\Sigma_{\mathcal{D}})$. In this
case we can argue in a similar way as in \cite[Prop. 4.2]{BCdPS}
which in turn is based on the technique of Brezis-Nirenberg.

Let $\{w_{m}\}\subset
\mathcal{X}_{\Sigma_{\mathcal{D}}}^s(\mathcal{C}_{\Omega})$ be a
minimizing sequence of $S_{\lambda}(\Omega)$, and suppose without
loss of generality that $w_m\geq0$ and
$\|w_{m}(\cdot,0)\|_{L^{2_s^*}(\Omega)}=1$. Clearly, this implies
that
\begin{equation}\label{eq:cota_M}
\|w_m\|_{\mathcal{X}_{\Sigma_{\mathcal{D}}}^s(\mathcal{C}_{\Omega})}\leq
M,
\end{equation} then, there exists a subsequence (denoted also by
$\{w_m\}$) verifying,
\begin{align*}
w_m&\rightharpoonup w\ \text{weakly in}\ \mathcal{X}_{\Sigma_{\mathcal{D}}}^s(\mathcal{C}_{\Omega}),\\
w_m(\cdot,0)&\to w(\cdot,0)\ \text{strongly in}\ L^{q}(\Omega),\ 1\leq q<2_s^*,\\
w_m(\cdot,0)&\to w(\cdot,0)\ \text{a.e. in}\ \Omega.
\end{align*}
Using the weak convergence we get
\begin{align*}
\|w_m\|_{\mathcal{X}_{\Sigma_{\mathcal{D}}}^s(\mathcal{C}_{\Omega})}^2&=\|w_m-w\|_{\mathcal{X}_{\Sigma_{\mathcal{D}}}^s(\mathcal{C}_{\Omega})}^2\\
&\,\quad +\|w\|_{\mathcal{X}_{\Sigma_{\mathcal{D}}}^s(\mathcal{C}_{\Omega})}^2+2\kappa_s\int_{\mathcal{C}_{\Omega}}y^{1-2s}\langle\nabla w,\nabla w_m-\nabla w\rangle dxdy\\
&=\|w_m-w\|_{\mathcal{X}_{\Sigma_{\mathcal{D}}}^s(\mathcal{C}_{\Omega})}^2+\|w\|_{\mathcal{X}_{\Sigma_{\mathcal{D}}}^s(\mathcal{C}_{\Omega})}^2+o(1).
\end{align*}
Hence,
\begin{align*}
Q_{\lambda}(w_m)&=\|w_m\|_{\mathcal{X}_{\Sigma_{\mathcal{D}}}^s(\mathcal{C}_{\Omega})}^2-\lambda\|w_m(\cdot,0)\|_{L^2(\Omega)}^2\\
&=\|w_m-w\|_{\mathcal{X}_{\Sigma_{\mathcal{D}}}^s(\mathcal{C}_{\Omega})}^2+\|w\|_{\mathcal{X}_{\Sigma_{\mathcal{D}}}^s(\mathcal{C}_{\Omega})}^2-\lambda\|w_m(\cdot,0)\|_{L^2(\Omega)}^2+o(1)\\
&\geq
\widetilde{S}(\Sigma_{\mathcal{D}})\|w_{m}(\cdot,0)-w(\cdot,0)\|_{L^{2_s^*}(\Omega)}^2+S_{\lambda}(\Omega)\|w(\cdot,0)\|_{L^{2_s^*}(\Omega)}^2+o(1).
\end{align*}
Thus, because of the normalization  $\|w_m (\cdot,
0)\|_{L^{2^*_s}}=1$, it follows
\begin{align*}
Q_{\lambda}(w_m)&\geq
(\widetilde{S}(\Sigma_{\mathcal{D}})-S_{\lambda}(\Omega))\|w_{m}(\cdot,0)-w(\cdot,0)\|_{L^{2_s^*}(\Omega)}^2+S_{\lambda}(\Omega)+
o(1).
\end{align*}
Since $\{w_m\}$ is a minimizing sequence of $S_{\lambda}(\Omega)$, we obtain
\begin{equation*}
o(1)+S_{\lambda}(\Omega)\geq
(\widetilde{S}(\Sigma_{\mathcal{D}})-S_{\lambda}(\Omega))\|w_{m}(\cdot,0)-w(\cdot,0)\|_{L^{2_s^*}(\Omega)}^2+S_{\lambda}(\Omega)+o(1).
\end{equation*}
Finally, using that
$S_{\lambda}(\Omega)<\widetilde{S}(\Sigma_{\mathcal{D}})$ it follows
\begin{equation*}
w_m(\cdot,0)\to w(\cdot,0)\ \text{in}\ L^{2_s^*}(\Omega).
\end{equation*}
By a standard lower semi-continuity argument, $w$ is a minimizer for $Q_{\lambda}(\cdot)$, so we get that the sequence is relatively compact.\newline

\textbf{(2)}
$S_{\lambda}(\Omega)=\widetilde{S}(\Sigma_{\mathcal{D}})$. Let us
consider $\{w_m\}\subset
\mathcal{X}_{\Sigma_{\mathcal{D}}}^s(\mathcal{C}_{\Omega})$ as in
the previous case. Thus $\{w_m\}$ is also a minimizing sequence for
$\widetilde{S}(\Sigma_{\mathcal{D}})$ and we proceed in a similar
way as in \cite[Theorem 2.2]{LPT}. Using Theorem \ref{concompact},
we get that  either $\{w_{m}\}$ is relatively compact or the weak
limit $w\equiv0$.
\newline
In the first case, $w\not\equiv 0$, by Theorem \ref{concompact} we
have
\begin{equation*}
\|u\|_{L^{2_s^*}(\Omega)}^2
\widetilde{S}(\Sigma_{\mathcal{D}})=\int_{\mathcal{C}_{\Omega}}d\mu
=\int_{\mathcal{C}_{\Omega}}y^{1-2s}|\nabla
w|^2dxdy+\widetilde{S}(\Sigma_{\mathcal{D}})\sum_{i\in
I}\nu_{i}^{\frac{2}{2_s^*}},
\end{equation*}
as well as
\begin{equation*}
\int_{\Omega}d\nu=\int_{\Omega}|u|^{2_s^*}dx+\sum_{i\in
I}\nu_{i}\geq\int_{\Omega}|u|^{2_s^*}dx.
\end{equation*}
By the two expressions  above, and using that
$\|u\|_{L^{2_s^*}(\Omega)}=1$ we get,
\begin{align}\label{cuentita}
\left(1-\sum_{i\in I}\nu_{i}\right)^{\frac{2}{2_s^*}}&\leq\frac{1}{\widetilde{S}(\Sigma_{\mathcal{D}})}\int_{\mathcal{C}_{\Omega}}y^{1-2s}|\nabla w|^2dxdy \\
&\leq\frac{1}{\widetilde{S}(\Sigma_{\mathcal{D}})}\left( \widetilde{S}(\Sigma_{\mathcal{D}})-\widetilde{S}(\Sigma_{\mathcal{D}})\sum_{i\in I}\nu_{i}^{\frac{2}{2_s^*}}\right)\nonumber \\
&= 1-\sum_{i\in I}\nu_{i}^{\frac{2}{2_s^*}}\nonumber ,
\end{align}
hence, $ {\nu}_i\leq 1$ $\forall i\in I.$ And therefore,  by
\eqref{cuentita} the only possibility is $\nu_i=0$ for all $i\in I$.
This leads to
\begin{equation*}
\int_{\Omega}|u_m|^{2_s^*}dx\to\int_{\Omega}|u|^{2_s^*}dx,
\end{equation*}
from which we deduce that $u_m$ (and thus $w_m=E_s[u_m]$) is
relatively compact.

Now we consider the case $w\equiv 0$ (and thus $u\equiv0$). In this
case by Theorem \ref{concompact} and \eqref{cuentita} we get
\begin{equation*}
\sum_{i\in I}\nu_i=1,\ \mbox{and}\ \sum_{i\in
I}\nu_i^{\frac{2}{2_s^*}}\leq 1,
\end{equation*}
then we infer that $I$ must be a singleton, i.e.,
\begin{equation*}
\nu=\delta_{x_0}\ \ \ \ \ \mbox{and}\ \ \ \ \ \mu=\widetilde{S}(\Sigma_{\mathcal{D}})\delta_{x_0}=S_{\lambda}(\Omega)\delta_{x_0},
\end{equation*}
with $x_0\in\overline{\Omega}$.

\

To show that $x_0\in\overline{\Sigma}_{\mathcal{N}}$ we argue by
contradiction. If $x_0\in\Omega\cup\Sigma_{\mathcal{D}}$, we set
$\overline{\phi}_r(x,y)$ as a cut-off function centered at
$x_0\in\Omega$, and define the sequence
\begin{equation*}
w_{m,r}=w_m\overline{\phi}_r(x,y)
\end{equation*}
and the traces sequence $\{u_{m,r}\}=\{Tr[w_{m,r}]\}$. Then for all $r>0$

\begin{equation}\label{contradiction}
\lim_{m\to\infty}\frac{\displaystyle\int_{\mathcal{C}_{\Omega}}y^{1-2s}|\nabla
w_{m,r}|^2dxdy}{\|u_{m,r}\|_{L^{2_s^*}(\Omega)}^2}=\widetilde{S}(\Sigma_{\mathcal{D}}).
\end{equation}
Note that for $r$ sufficiently small, the sequence $\{w_{m,r}\}$
belongs to $\mathcal{X}_{0}^s(\mathcal{C}_{\Omega})$, then for any
$m\in\mathbb{N}$, by Proposition \ref{const_dir},
\begin{align*}
\lim_{r\to0}\frac{\int_{\mathcal{C}_{\Omega}}y^{1-2s}|\nabla
w_{m,r}|^2dxdy}{\|u_{m,r}\|_{L^{2_s^*}(\Omega)}^2}&\geq
\inf_{\substack{v\in\mathcal{X}_{0}^s(\mathcal{C}_{\Omega})\\ v\not\equiv 0}}\frac{\int_{\mathcal{C}_{\Omega}}y^{1-2s}|\nabla v|^2dxdy}{\|v(x,0)\|_{L^{2_s^*}(\Omega)}^2}\\
&=\kappa_sS(s,N)\\
&>2^{\frac{-2s}{N}}\kappa_sS(s,N)\\
&\geq\widetilde{S}(\Sigma_{\mathcal{D}}),
\end{align*}
and we reach a contradiction with \eqref{contradiction}. Therefore,
$x_0\in\partial\Omega$. If $x_0\in\mathring{\Sigma}_{\mathcal{D}}$
arguing as before we reach the same contradiction. As a consequence,
$x_0\in\overline{\Sigma}_{\mathcal{N}}$.

It only remains to prove the tightness condition \eqref{tight} for
the minimizing sequence $\{w_m\}\subset
\mathcal{X}_{\Sigma_{\mathcal{D}}}^s(\mathcal{C}_{\Omega})$, i.e.,
there is no evanescence. Since $\{w_m\}$ is a minimizing sequence of
$S_{\lambda}(\Omega)$ then $\{w_m\}$ or a multiple will converge to
a critical point of the functional \eqref{extensionfunctional}. Let
$\{\tilde{w}_{m}\}$ be such a sequence, then
\begin{equation}\label{minima}
J(\tilde{w}_{m})\to c,\ \mbox{and}\ J'(\tilde{w}_{m})\to 0.
\end{equation}
We proceed now as in \cite[Lemma 3.6]{BCdPS} which is based on ideas
contained in \cite{AAP}. By contradiction, suppose that there exists
$\eta_0>0$, and $m_0\in \mathbb{N}$ such that for any $\rho>0$ one
has, up to a subsequence,

\begin{equation}\label{pp}
\int_{\{y>\rho\}}\int_{\Omega}y^{1-2s}|\nabla
\tilde{w}_{m}|^2dxdy>\eta_0,\ \forall m\ge m_0.
\end{equation}
Fix $\varepsilon>0$ (to be determined) and let $r>0$ be such that
\begin{equation*}
 \int_{\{y>r\}}\int_{\Omega}y^{1-2s}|\nabla \tilde{w}|^2dxdy<\varepsilon.
\end{equation*}
Let $j=\left[\frac{M}{\kappa_s\varepsilon}\right]$ be the integer
part with $M$ the constant in \eqref{eq:cota_M} and
$I_k=\{y\in\mathbb{R^+}:r+k\leq y\leq r+k+1\}$, $k=0,1,\ldots, j$.
Then
\begin{equation*}
\sum_{k=0}^{j}\int_{I_k}\int_{\Omega}y^{1-2s}|\nabla \tilde{w}_{m}|^2dxdy\leq \int_{\mathcal{C}_{\Omega}}y^{1-2s}|\nabla \tilde{w}_{m}|^2dxdy\leq\frac{M}{\kappa_s}<\varepsilon(j+1).
\end{equation*}
Then, there exists $k_0\in\{0,\ldots, j\}$ such that, up to a subsequence,
\begin{equation}\label{peque}
\int_{I_{k_0}}\int_{\Omega}y^{1-2s}|\nabla
\tilde{w}_{m}|^2dxdy\leq\varepsilon,\ \forall m\ge m_0.
\end{equation}
We set now a regular cut-off function
\begin{equation*}
        \chi(y)=\left\{
        \begin{tabular}{lcl}
        $0$ & & if $y\leq r+k_0$, \\
        $1$ & & if $y>r+k_0+1$,
        \end{tabular}
        \right.
\end{equation*}
and we define $v_m(x,y)=\chi(y)\tilde{w}_{m}(x,y)$. Then, since
$v_m(x,0)=0$, it follows that

\begin{align*}
|\langle J'(\tilde{w}_{m})-J'(v_m),v_m\rangle|&=\kappa_s\int_{\mathcal{C}_{\Omega}}y^{1-2s}\langle \nabla(\tilde{w}_{m}-v_m),\nabla v_m\rangle dxdy\\
&=\kappa_s\int_{I_{k_0}}\int_{\Omega}y^{1-2s}\langle\nabla(\tilde{w}_{m}-v_m),\nabla v_m\rangle dxdy.
\end{align*}
Moreover, by the Cauchy-Schwarz inequality, \eqref{peque} and the
compact inclusion of the space
$H^1\left(I_{k_0}\times\Omega,y^{1-2s}dxdy\right)$ into
$L^2\left(I_{k_0}\times\Omega,y^{1-2s}dxdy\right)$, it follows that

\begin{align*}
&|\langle J'(\tilde{w}_{m})-J'(v_m),v_m\rangle|\\
&\leq\kappa_s\left(\int_{I_{k_0}}\int_{\Omega}y^{1-2s}|\nabla(\tilde{w}_{m}-v_m)|^2 dxdy\right)^{1/2}\left(\int_{I_{k_0}}\int_{\Omega}y^{1-2s}|\nabla v_m|^2 dxdy\right)^{1/2}\\
&\leq C\kappa_s \varepsilon.
\end{align*}
Finally, by \eqref{minima},

\begin{equation*}
|\langle J'(v_m),v_m\rangle|\leq C\kappa_s \varepsilon+o(1),
\end{equation*}
thus, for $m$ big enough
\begin{equation*}
\int\limits_{\{y>r+k_0+1\}}\int_{\Omega}y^{1-2s}|\nabla w_m|^2dxdy\leq\int_{\mathcal{C}_{\Omega}}\int_{\Omega}y^{1-2s}|\nabla v_m|^2dxdy\leq\frac{\langle J'(v_m),v_m\rangle}{\kappa_s}\leq C\varepsilon,
\end{equation*}
which contradicts \eqref{pp}. Then, the proof of Theorem \ref{CCA} is complete.
\end{proof}
\begin{remark}
Note that the proof of Theorem \ref{th_dirichlet} was done in the first part of the proof of Theorem \ref{CCA}.
\end{remark}
Now we prove Theorems \ref{th1}, \ref{th_att}.
\begin{proof}[Proof of Theorem \ref{th_att}]
Let $\{w_m\}\subset
\mathcal{X}_{\Sigma_{\mathcal{D}}}^s(\mathcal{C}_{\Omega})$ be a
minimizing sequence of $\widetilde{S}(\Sigma_{\mathcal{D}})$ and $w$
its weak limit. By Theorem \ref{CCA}, $\{w_m\}$ is relatively
compact, and consequently the infimum is achieved, or $w\equiv0$ and
\begin{equation*}
y^{1-2s}|\nabla w_n|^2\to \mu\delta_{x_0},\ \text{and}\
|u_n|^{2_s^*}\to\nu\delta_{x_0},
\end{equation*}
with $x_0\in\overline{\Sigma}_{\mathcal{N}}$. Indeed, we can assume,
without loss of generality, that
$\mu=\widetilde{S}(\Sigma_{\mathcal{D}})$ and $\nu=1$. With the same
notation as in the proof of Theorem \ref{CCA}, we consider the
functions
\begin{equation}\label{eq:barra}
w_{m,r}=w_m\overline{\phi}_r(x,y)
\end{equation}
with $\overline{\phi}_r(x,y)$ a smooth cut-off function centered at
$x_0\in\overline{\Sigma}_{\mathcal{N}}$. Clearly, \eqref{eq:barra}
satisfies \eqref{contradiction}. Since $\Sigma_{\mathcal{N}}$ is
smooth, for $r$ small enough, the sequence $\{u_{m,r}\}\subset
H_{\Sigma_{\mathcal{D}}^{r}}^s(\Omega_r)$, or equivalently, the
sequence
$\{w_{m,r}\}\subset\mathcal{X}_{\Sigma_{\mathcal{D}}^{r}}^s(\mathcal{C}_{\Omega_r})$
thus,  by Proposition \ref{const_dir},
\begin{equation*}
\lim_{r\to0}\frac{\displaystyle\int_{\mathcal{C}_{\Omega}}y^{1-2s}|\nabla
w_{m,r}|^2dxdy}{\|u_{m,r}\|_{L^{2_s^*}(\Omega)}^2}\geq
2^{\frac{-2s}{N}}\kappa_sS(s,N)>\widetilde{S}(\Sigma_{\mathcal{D}}),
\end{equation*}
which contradicts \eqref{contradiction}. Then the only possibility is that $\{w_m\}$ is relatively compact, which proves the assertion.

\end{proof}

\begin{proof}[Proof of Theorem \ref{th1}]
Let $\{w_m\}\subset
\mathcal{X}_{\Sigma_{\mathcal{D}}}^s(\mathcal{C}_{\Omega})$ be a
minimizing sequence for $S_{\lambda}(\Omega)$ and $w$ its weak
limit. Thus, either $\{w_m\}$ is relatively compact and consequently
the infimum is achieved or by Theorem \ref{CCA}, \eqref{acumulacion}
holds up to a subsequence. For that sequence we consider the
functions $w_{m,r}=w_m\overline{\phi}_r(x,y),$ with
$\overline{\phi}_r(x,y)$ a smooth cut-off function centered at
$x_0\in\overline{\Sigma}_{\mathcal{N}}$ as in \eqref{eq:barra}.On
the one hand, $\{w_{m,r}\}$ and its trace $\{u_{m,r}\}$ satisfy

\begin{equation}\label{achieved}
\frac{\|w_{m,r}\|_{\mathcal{X}_{\Sigma_{\mathcal{D}}}^s(\mathcal{C}_{\Omega})}^2-\lambda\|u_{m,r}\|_{L^2(\Omega)}^2}{\|u_{m,r}\|_{L^{2_s^*}(\Omega)}^2}\to
S_{\lambda}(\Omega),\quad\mbox{as}\quad m\to\infty,
\end{equation}
 for any $r>0$. On the other, by the definition of $\widetilde{S}(\Sigma_{\mathcal{N}})$ we have

\begin{equation*}
\lim_{r\to0}
\frac{\|w_{m,r}\|_{\mathcal{X}_{\Sigma_{\mathcal{D}}}^s(\mathcal{C}_{\Omega})}^2-\lambda\|u_{m,r}\|_{L^2(\Omega)}^2}{\|u_{m,r}\|_{L^{2_s^*}(\Omega)}^2}
\geq\widetilde{S}(\Sigma_{\mathcal{N}}),
\end{equation*}
which contradicts \eqref{achieved} since we are supposing
$S_{\lambda}(\Omega)<\widetilde{S}(\Sigma_{\mathcal{D}})$. Hence
$\{w_{m}\}$ is relatively compact.
\end{proof}

\begin{proof}[Proof of Theorem \ref{th_existencia}-$(2)$]
By Theorem \ref{th1}, it follows inmediatly the existence of a
solution to problem $(P_{\lambda})$ whenever we have
$S_{\lambda}(\Omega)<\widetilde{S}(\Sigma_{\mathcal{N}})$, which is
guaranteed by Proposition \ref{proplambda} if
$0<\lambda<\lambda_{1,s}$. Also, there exists a solution when
$S_{\lambda}(\Omega)<\widetilde{S}(\Sigma_{\mathcal{D}})$ by Theorem
\ref{th_dirichlet}.\newline Specifically, by Theorem \ref{th1} and
Proposition \ref{proplambda}, if $0<\lambda<\lambda_{1,s}$ there
exists a minimizer function $\tilde{w}$ with
$\tilde{u}=Tr[\tilde{w}]$ satisfying
\begin{equation*}
\|\tilde{w}\|_{\mathcal{X}_{\Sigma_{\mathcal{D}}}^s(\mathcal{C}_{\Omega})}^2-\lambda\|\tilde{u}\|_{L^2(\Omega)}^2=S_{\lambda}(\Omega)\|\tilde{u}\|_{L^{2_s^*}(\Omega)}^2.
\end{equation*}
Taking $w=\tilde{w}/\|\tilde{u}\|_{L^{2_s^*}(\Omega)}^2$ and its
trace $u=\tilde{u}/\|\tilde{u}\|_{L^{2_s^*}(\Omega)}^2$,
\begin{equation}\label{ult}
\|w\|_{\mathcal{X}_{\Sigma_{\mathcal{D}}}^s(\mathcal{C}_{\Omega})}^2-\lambda\|u\|_{L^2(\Omega)}^2=S_{\lambda}(\Omega).
\end{equation}
Thus $w$ is a minimizer of $S_{\lambda}(\Omega)$ constrained to the
sphere $\|u\|_{L^{2_s^*}(\Omega)}=1$. Without loss of generality we
can assume $w\geq0$, otherwise we take $|w|$ instead. Or
equivalently, $w$ is a critical point of the functional
$Q_{\lambda}$ constrained to $\|u\|_{L^{2_s^*}(\Omega)}^2=1$, then
thanks to \eqref{norma1} and \eqref{norma2}, such a critical point
is a non-negative solution to equation
\begin{equation*}
(-\Delta)^su-\lambda u=\tau u^{2_s^*-1}\,\,\, \mbox{in} \,\,\,
\Omega,
\end{equation*}
where $\tau\in\mathbb{R}$ is a Lagrange multiplier. Moreover
$\tau=S_{\lambda}(\Omega)>0$ since $\lambda<\lambda_{1,s}$. Thus, it
follows that defining $v=ku$, it is a non-negative solution to the
equation in $(P_{\lambda})$ for
$k=\left(S_{\lambda}(\Omega)\right)^{\frac{1}{2_s^*-2}}$. Even more,
by the maximum principle, $v>0$ in $\Omega$, proving that it is a
solution to $(P_{\lambda})$.
\end{proof}
\begin{remark}
By Proposition \ref{proplambda}, if  $0<\lambda<\lambda_{1,s}$ then
the Neumann constant  satisfies
$S_{\lambda}(\Omega)<\widetilde{S}(\Sigma_{\mathcal{N}})$, while for
the Dirichlet constant $\widetilde{S}(\Sigma_{\mathcal{D}})$, we
have not such a result because we do not know an explicit expression
of the corresponding minimizers and hence, we can not provide
estimates similar to those of Lemma \ref{lemma_est}.
\end{remark}
To complete the proof of Theorem \ref{th_existencia} it only remains
to prove statement (3) in Theorem \ref{th_existencia}. This will be
done in the next subsection.

\subsection{Moving the boundary conditions. Proof of Theorem \ref{th_existencia}-$(3)$}\label{subsec:4.2}

\

Let us consider the following eigenvalue problem
\begin{equation*}
        \left\{
        \begin{tabular}{lcl}
        $(-\Delta)^s u=\lambda_{1,s}(\alpha)u$ & &in $\Omega\subset \mathbb{R}^{N}$,\\
        $u=0$& &on $\Sigma_{\mathcal{D}}(\alpha)$,\\
        $\frac{\partial u}{\partial\nu}=0$  & &on $\Sigma_{\mathcal{N}}(\alpha)$,\\
        \end{tabular}
        \right.
        \tag{$EP_{\alpha}$}
\end{equation*}
with the following hypotheses:
\begin{enumerate}
\item[$B_1:$] $\Omega\subset\mathbb{R}^N$ is a regular bounded domain.
\item[$B_2:$] $\Sigma_{\mathcal{D}}(\alpha)$ and $\Sigma_{\mathcal{N}}(\alpha)$ are smooth $(N-1)$-dimensional submanifolds of $\partial\Omega$
such that
$\Sigma_{\mathcal{D}}(\alpha)\cup\Sigma_{\mathcal{N}}(\alpha)=\partial\Omega$,
$\Sigma_{\mathcal{D}}(\alpha)\cap\Sigma_{\mathcal{N}}(\alpha)=\emptyset$,
and the interphase
$\Gamma(\alpha)=\Sigma_{\mathcal{D}}(\alpha)\cap\overline{\Sigma}_{\mathcal{N}}(\alpha)$
is a $(N-2)$-dimensional submanifold.
\item[$B_3:$] $\mathcal{H}_{N-1}(\Sigma_{\mathcal{D}}(\alpha))=\alpha$, $\Sigma_{\mathcal{D}}(\alpha_1)\subseteq\Sigma_{\mathcal{D}}(\alpha_2)$ for any $0<\alpha_1\leq\alpha_2<\mathcal{H}_{N-1}(\partial\Omega)$.
\end{enumerate}
Following \cite[Lemma 4.1]{ColP} we have the next result.
\begin{lemma}\label{convergence}
Let $u_{\alpha}$ be a positive solution to problem $(EP_{\alpha})$
and suppose hypotheses $B_1$-$B_3$. Then we obtain,
\begin{equation*}
\lambda_{1,s}(\alpha)\to 0,\ \text{as}\ \alpha\to0.
\end{equation*}
\end{lemma}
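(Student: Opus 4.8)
The plan is to show that $\lambda_{1,s}(\alpha)\to 0$ as $\alpha\to 0$ by producing, for each small $\alpha$, an admissible test function in $H^s_{\Sigma_{\mathcal{D}}(\alpha)}(\Omega)$ whose Rayleigh quotient $\|u\|^2_{H^s_{\Sigma_{\mathcal{D}}(\alpha)}(\Omega)}/\|u\|^2_{L^2(\Omega)}$ tends to $0$. Since $\lambda_{1,s}(\alpha)$ is the infimum of this quotient over the nonzero elements of $H^s_{\Sigma_{\mathcal{D}}(\alpha)}(\Omega)$, such a family of test functions gives $\limsup_{\alpha\to 0}\lambda_{1,s}(\alpha)=0$; monotonicity of $\alpha\mapsto\lambda_{1,s}(\alpha)$, which follows from $B_3$ (as $\Sigma_{\mathcal{D}}(\alpha_1)\subseteq\Sigma_{\mathcal{D}}(\alpha_2)$ forces $H^s_{\Sigma_{\mathcal{D}}(\alpha_2)}(\Omega)\subseteq H^s_{\Sigma_{\mathcal{D}}(\alpha_1)}(\Omega)$, hence $\lambda_{1,s}(\alpha_1)\le\lambda_{1,s}(\alpha_2)$), together with nonnegativity, then yields the full limit.

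First I would fix a reference value $\alpha_0>0$ and the associated first eigenfunction $\varphi_{\alpha_0}>0$ of $(EP_{\alpha_0})$, which lies in $H^s_{\Sigma_{\mathcal{D}}(\alpha_0)}(\Omega)$. For $\alpha<\alpha_0$ this function is not admissible for $(EP_\alpha)$ only on $\Sigma_{\mathcal{D}}(\alpha)\setminus\Sigma_{\mathcal{D}}(\alpha_0)$; actually it is the other way around, so instead I would start from a fixed smooth function $\phi$ supported away from a fixed piece of the boundary that is Dirichlet for all small $\alpha$, and localize it. Concretely, since $\mathcal{H}_{N-1}(\Sigma_{\mathcal{D}}(\alpha))=\alpha\to 0$, the Dirichlet part shrinks to a set of measure zero; choose a point $p\in\partial\Omega$ lying outside $\overline{\Sigma_{\mathcal{D}}(\alpha)}$ for all $\alpha$ below some threshold (possible because the $\Sigma_{\mathcal{D}}(\alpha)$ are nested and shrinking), and take a fixed bump function $\phi\in C^\infty_c((\Omega\cup\Sigma_{\mathcal{N}}(\alpha_*))\cap B_\delta(p))$ that is admissible, i.e.\ vanishes near $\Sigma_{\mathcal{D}}(\alpha)$, for every $\alpha<\alpha_*$. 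Then $\phi\in H^s_{\Sigma_{\mathcal{D}}(\alpha)}(\Omega)$ for all such $\alpha$, and its Rayleigh quotient is a \emph{fixed} finite number $R_0:=\|\phi\|^2_{H^s(\Omega)}/\|\phi\|^2_{L^2(\Omega)}$, independent of $\alpha$. This already gives $\limsup_\alpha\lambda_{1,s}(\alpha)\le R_0<\infty$, but not yet that the limit is $0$.

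To push the quotient to zero I would rescale: replace $\phi$ by $\phi_\rho(x)=\phi((x-p)/\rho)$ (suitably reflected so as to stay in $\Omega$), which for small $\rho$ is still admissible; the spectral $H^s$-seminorm scales like $\rho^{N-2s}$ while the $L^2$-norm scales like $\rho^{N}$, so the quotient behaves like $\rho^{-2s}$ — that goes to $+\infty$, the wrong direction. The correct device is the opposite: one wants the Dirichlet constraint to become negligible, so I would instead use that, because $\Sigma_{\mathcal{D}}(\alpha)$ has $\mathcal{H}_{N-1}$-measure $\alpha\to 0$, the constrained first eigenvalue converges to the \emph{unconstrained} (pure Neumann) first eigenvalue, which is $0$ (the constant function being the Neumann ground state). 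Rigorously: let $\psi_\alpha$ be a function equal to a fixed constant $c\ne 0$ outside a small neighborhood $N_\alpha$ of $\Sigma_{\mathcal{D}}(\alpha)$ and cut off to vanish on $\Sigma_{\mathcal{D}}(\alpha)$, with $|N_\alpha|\to 0$ as $\alpha\to 0$ (this uses $B_1$–$B_3$ and the regularity of $\Gamma(\alpha)$ to build a capacity-type cutoff whose $H^s$-seminorm tends to $0$ as the Dirichlet part shrinks — this is where I invoke the $\frac12<s<1$ range so that the fractional trace/capacity estimates on $\Sigma_{\mathcal{D}}(\alpha)$ behave correctly). Then $\psi_\alpha\in H^s_{\Sigma_{\mathcal{D}}(\alpha)}(\Omega)$, $\|\psi_\alpha\|^2_{L^2(\Omega)}\to c^2|\Omega|>0$, and $\|\psi_\alpha\|^2_{H^s_{\Sigma_{\mathcal{D}}(\alpha)}(\Omega)}\to 0$, so $\lambda_{1,s}(\alpha)\le \|\psi_\alpha\|^2_{H^s}/\|\psi_\alpha\|^2_{L^2}\to 0$. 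Combined with $\lambda_{1,s}(\alpha)\ge 0$, this gives the claim.

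\textbf{Main obstacle.} The delicate point is the construction of the cutoff $\psi_\alpha$ with vanishing $H^s$-seminorm: unlike the local case $s=1$, the spectral fractional seminorm is nonlocal, so one cannot simply estimate $\int y^{1-2s}|\nabla(E_s[\psi_\alpha])|^2$ by a local integral near $\Sigma_{\mathcal{D}}(\alpha)$. The clean way is to work in the extension: one needs a function $W_\alpha\in\mathcal{X}^s_{\Sigma_{\mathcal{D}}(\alpha)}(\mathcal{C}_\Omega)$ that equals $E_s[c]$ away from $\Sigma_{\mathcal{D}}^*(\alpha)$ and vanishes on $\Sigma_{\mathcal{D}}^*(\alpha)$, with $\|W_\alpha-E_s[c]\|^2_{\mathcal{X}^s_{\Sigma_{\mathcal{D}}(\alpha)}(\mathcal{C}_\Omega)}\to 0$; this is a fractional relative-capacity statement for the shrinking submanifold $\Sigma_{\mathcal{D}}(\alpha)$, and controlling it requires the $\frac12<s<1$ hypothesis precisely because $H^s_0(\Omega)\subsetneq H^s(\Omega)$ in that range (cf.\ Remark~\ref{rem:rango_s}), so that the Dirichlet constraint genuinely costs something that can nonetheless be made arbitrarily small. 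Following \cite[Lemma 4.1]{ColP} this is handled by explicit barrier/capacity estimates adapted to the extension problem \eqref{extension}; I would carry those out in a neighborhood of $\Gamma(\alpha)$ using the smoothness of $\Gamma(\alpha)$ from $B_2$.
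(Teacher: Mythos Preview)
You have missed the one-line argument that the paper actually uses. Because $(-\Delta)^s$ here is the \emph{spectral} fractional Laplacian, its eigenpairs are $(\varphi_j,\lambda_j^s)$ where $(\varphi_j,\lambda_j)$ are the eigenpairs of the classical Laplacian with the \emph{same} mixed boundary data; in particular $\lambda_{1,s}(\alpha)=\lambda_{1,1}(\alpha)^s$. The paper simply invokes \cite[Lemma~4.1]{ColP}, which is the $s=1$ statement $\lambda_{1,1}(\alpha)\to 0$ as $\alpha\to 0$, and then applies $t\mapsto t^s$. No fractional capacity, no extension, no cutoffs are needed.

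Your direct variational approach is in principle sound --- build $\psi_\alpha\approx\text{const}$ vanishing on $\Sigma_{\mathcal{D}}(\alpha)$ with $\|\psi_\alpha\|_{H^s}\to 0$ --- but the step you flag as the ``main obstacle'' is the entire content of the lemma, and you do not actually carry it out. Your appeal to \cite[Lemma~4.1]{ColP} at that point is a misreading: that reference treats the classical Laplacian, not barrier/capacity estimates for the extension problem~\eqref{extension}. If you wanted to push your route through, you would effectively be reproving the $s=1$ capacity statement and then separately adapting it to the weighted extension; the spectral identity $\lambda_{1,s}=\lambda_{1,1}^s$ makes all of that unnecessary. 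The exploratory detours (fixed bump, rescaling going the wrong way) should also be pruned from a final write-up.
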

\begin{proof}
By the definition of the fractional operator $(-\Delta)^s$, we have
that the eigenvalue $\lambda_{1,s}(\alpha)=\lambda_{1,1}^s(\alpha)$.

By \cite[Lemma 4.1]{ColP}, we have that $\lambda_{1,1}(\alpha)\to 0 $ as
$\mathcal{H}_{N-1}(\Sigma_{\mathcal{D}}(\alpha))=\alpha\to 0$. Then
the result follows.
\end{proof}

The next proposition is the analogous to \cite[Proposition
2.1]{ACP} for our fractional setting.

\begin{proposition}\label{att}
Let $\Omega\subset\mathbb{R}^N$ be a smooth bounded domain. Given a
family
$\{\Sigma_{\mathcal{D}}(\alpha):0<\alpha<\mathcal{H}_{N-1}(\partial\Omega)\}$
satisfying hypotheses $B_1$-$B_3$, there exists a positive constant
$\alpha_0$ such that for any
$\alpha<\alpha_0$, $\widetilde{S}(\Sigma_{\mathcal{D}}(\alpha))$ is attained.
\end{proposition}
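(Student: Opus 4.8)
The plan is to combine Theorem \ref{th_att} with the asymptotic decay of the first eigenvalue given in Lemma \ref{convergence}. By Theorem \ref{th_att}, it suffices to show that $\widetilde{S}(\Sigma_{\mathcal{D}}(\alpha))<2^{-2s/N}\kappa_sS(s,N)$ once $\alpha$ is small enough; by Proposition \ref{const_dir} we always have the non-strict inequality $\widetilde{S}(\Sigma_{\mathcal{D}}(\alpha))\le 2^{-2s/N}\kappa_sS(s,N)$, so the whole issue is to rule out equality for small $\alpha$. I would argue by contradiction: suppose there is a sequence $\alpha_n\to 0$ with $\widetilde{S}(\Sigma_{\mathcal{D}}(\alpha_n))=2^{-2s/N}\kappa_sS(s,N)$ for all $n$.

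The next step is to extract useful information from such equality. For each $n$, take a minimizing sequence for $\widetilde{S}(\Sigma_{\mathcal{D}}(\alpha_n))$; since the value equals the critical threshold $2^{-2s/N}\kappa_sS(s,N)$, Theorem \ref{th_att} (in its contrapositive form, i.e. the dichotomy established inside the proof of Theorem \ref{CCA}) shows this minimizing sequence cannot be relatively compact, so it must concentrate at a single point $x_n\in\overline{\Sigma}_{\mathcal{N}}(\alpha_n)$ with $y^{1-2s}|\nabla w_{m,n}|^2\rightharpoonup \widetilde{S}(\Sigma_{\mathcal{D}}(\alpha_n))\delta_{x_n}$ and $|u_{m,n}|^{2_s^*}\rightharpoonup \delta_{x_n}$. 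In other words, $\widetilde{S}(\Sigma_{\mathcal{D}}(\alpha_n))$ is \emph{not} attained for any $n$ along the contradiction hypothesis. I would then relate $\widetilde{S}(\Sigma_{\mathcal{D}}(\alpha))$ to the first eigenvalue through the strict monotonicity/subcriticality: because $\Sigma_{\mathcal{D}}(\alpha)\subseteq\Sigma_{\mathcal{D}}(\alpha')$ for $\alpha\le\alpha'$ (hypothesis $B_3$), the constants $\widetilde{S}(\Sigma_{\mathcal{D}}(\alpha))$ are monotone in $\alpha$, and the Poincaré-type constant degenerates as $\alpha\to0$ exactly because $\lambda_{1,s}(\alpha)\to0$ by Lemma \ref{convergence}.

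The heart of the matter — and the step I expect to be the main obstacle — is to produce, for $\alpha$ small, an explicit test function $v_\alpha\in H^s_{\Sigma_{\mathcal{D}}(\alpha)}(\Omega)$ (or its extension) realizing
$$
\frac{\|v_\alpha\|^2_{H^s_{\Sigma_{\mathcal{D}}(\alpha)}(\Omega)}}{\|v_\alpha\|^2_{L^{2_s^*}(\Omega)}}<2^{-2s/N}\kappa_sS(s,N),
$$
thereby contradicting the supposed equality. The natural candidate is a truncated, rescaled Aubin–Talenti bubble $\phi_\rho w_\varepsilon$ centered at a point of $\Sigma_{\mathcal{N}}(\alpha)$ far from $\Gamma(\alpha)$, exactly as in Lemma \ref{lemma_est} and Proposition \ref{proplambda}. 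One picks the concentration scale $\varepsilon=\varepsilon(\alpha)$ and cut-off radius $\rho=\rho(\alpha)$ so that the support avoids $\Sigma_{\mathcal{D}}(\alpha)$ (possible since $\mathcal{H}_{N-1}(\Sigma_{\mathcal{D}}(\alpha))=\alpha\to0$ and $B_3$ forces $\Sigma_{\mathcal{D}}(\alpha)$ to shrink to a set of measure zero), and such that the bubble effectively sees a half-space Neumann problem at the point $x_0\in\Sigma_{\mathcal{N}}(\alpha)$. The estimates \eqref{est1}–\eqref{est2} then give the quotient $\le 2^{-2s/N}\kappa_sS(s,N)+O((\varepsilon/\rho)^{N-2s})$, which is the right threshold but not yet strictly below it; the gain comes from the lower-order negative contribution of the $L^2$ term, whose coefficient is controlled by $\lambda_{1,s}(\alpha)$ but whose \emph{relative weight} against the error $O((\varepsilon/\rho)^{N-2s})$ must be made favorable by choosing $\varepsilon,\rho$ appropriately — this is the delicate bookkeeping. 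Alternatively, and perhaps more cleanly, one exploits that for $\alpha$ small $\Sigma_{\mathcal{D}}(\alpha)$ is contained in a small ball, so that the relevant local model degenerates to the pure Neumann (whole half-space) problem whose best constant is strictly $2^{-2s/N}\kappa_sS(s,N)$ but is \emph{not attained on a bounded set}, and a standard Brezis–Nirenberg–type perturbation pushes the infimum strictly below it; once strict inequality holds, Theorem \ref{th_att} applies and $\widetilde{S}(\Sigma_{\mathcal{D}}(\alpha))$ is attained, contradicting the non-attainment forced by the contradiction hypothesis. This furnishes $\alpha_0>0$ with the desired property.
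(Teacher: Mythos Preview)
Your approach has a genuine gap at the step you yourself flag as ``the main obstacle.'' The quantity $\widetilde{S}(\Sigma_{\mathcal{D}}(\alpha))$ is, by definition, the infimum of the quotient with $\lambda=0$; there is \emph{no} $L^2$ term to provide the ``lower-order negative contribution'' you invoke. The bubble estimates of Lemma~\ref{lemma_est} only yield
\[
\frac{\|\phi_\rho w_\varepsilon\|^2_{\mathcal{X}^s_{\Sigma_{\mathcal{D}}}}}{\|\phi_\rho u_\varepsilon\|^2_{L^{2^*_s}}}\le 2^{-2s/N}\kappa_sS(s,N)+O\!\left((\varepsilon/\rho)^{N-2s}\right),
\]
which never drops strictly below the threshold. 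Your attempted rescue via $\lambda_{1,s}(\alpha)$ is exactly what is needed, but you do not make the connection: you only say the Poincar\'e constant ``degenerates,'' without converting this into a bound on $\widetilde{S}(\Sigma_{\mathcal{D}}(\alpha))$.

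The paper's argument is far shorter and avoids bubbles entirely. By H\"older's inequality, $\|u\|_{L^2(\Omega)}\le |\Omega|^{s/N}\|u\|_{L^{2^*_s}(\Omega)}$, so for every admissible $w$,
\[
\frac{\|w\|^2_{\mathcal{X}^s_{\Sigma_{\mathcal{D}}(\alpha)}}}{\|w(\cdot,0)\|^2_{L^{2^*_s}(\Omega)}}
\le |\Omega|^{2s/N}\,\frac{\|w\|^2_{\mathcal{X}^s_{\Sigma_{\mathcal{D}}(\alpha)}}}{\|w(\cdot,0)\|^2_{L^{2}(\Omega)}},
\]
and taking the infimum gives $\widetilde{S}(\Sigma_{\mathcal{D}}(\alpha))\le |\Omega|^{2s/N}\lambda_{1,s}(\alpha)$. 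Since $\lambda_{1,s}(\alpha)\to 0$ by Lemma~\ref{convergence}, the right-hand side is eventually below $2^{-2s/N}\kappa_sS(s,N)$, and Theorem~\ref{th_att} then gives attainment. No contradiction argument, no concentration analysis, no tuning of $\varepsilon,\rho$ is needed: the right test function is essentially the first eigenfunction, not a concentrating bubble.
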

\begin{proof}
 We only have to check that hypotheses of Theorem
\ref{th_att} are satisfied. To do so,
 we use the H\"older inequality
together with Lemma \ref{convergence} as follows. By H\"older's
inequality,
\begin{equation}\label{eq:convergencia}
\begin{array}{rcl}
\widetilde{S}(\Sigma_{\mathcal{D}}(\alpha))&=&\inf_{\substack{w\in \mathcal{X}_{\Sigma_{\mathcal{D}}\!(\alpha\!)}^s(\mathcal{C}_{\Omega})\\ w\not\equiv 0}}\frac{\|w\|_{\mathcal{X}_{\Sigma_{\mathcal{D}}}^s(\mathcal{C}_{\Omega})}^2}{\|w(\cdot,0)\|_{L^{2_s^*}(\Omega)}^2}\\
&\leq & |\Omega|^{\frac{2s}{N}} \inf_{\substack{w\in \mathcal{X}_{\Sigma_{\mathcal{D}}\!(\alpha\!)}^s(\mathcal{C}_{\Omega})\\ w\not\equiv 0}}\frac{\|w\|_{\mathcal{X}_{\Sigma_{\mathcal{D}}}^s(\mathcal{C}_{\Omega})}^2}{\|w(\cdot,0)\|_{L^{2}(\Omega)}^2}\\
&= & |\Omega|^{\frac{2s}{N}} \lambda_{1,s}(\alpha).
\end{array}\end{equation}
Applying Lemma \ref{convergence} into \eqref{eq:convergencia}, we have that there exists $\alpha_0>0$ such that $\widetilde{S}(\Sigma_{\mathcal{D}}(\alpha))<2^{\frac{-2s}{N}}\kappa_sS(s,N)$ for any $\alpha<\alpha_0$. Hence, by Theorem \ref{th_att} the result follows.
\end{proof}
We complete now the proof of Theorem \ref{th_existencia}.
\begin{proof}[Proof of Theorem \ref{th_existencia}-$(3)$] Since $S_{\lambda}(\Omega)=\widetilde{S}(\Sigma_{\mathcal{D}})$ for $\lambda=0$,
 the existence of solution to problem $(P_0)$ is equivalent to the attainability of $\widetilde{S}(\Sigma_{\mathcal{D}})$. Thus, letting $\alpha$ sufficiently small, by Proposition \ref{att} there exists a minimizer function $\tilde{w}$ with $\tilde{u}=Tr[\tilde{w}]$ satisfying
\begin{equation*}
\|\tilde{w}\|_{\mathcal{X}_{\Sigma_{\mathcal{D}}}^s(\mathcal{C}_{\Omega})}^2=\widetilde{S}(\Sigma_{\mathcal{D}})\|\tilde{u}\|_{L^{2_s^*}(\Omega)}^2,
\end{equation*}
and we are done.
\end{proof}
\section{A nonexistence result: Pohozaev-type identity}\label{sec:6}
This last part deals with a non-existence result relying on a
Pohozaev-type identity. Notice that by Theorem
\ref{th_existencia}-$(3)$ we have the existence of solution to the
following critical problem,
\begin{equation}\label{prcrt}
        \left\{
        \begin{tabular}{lcl}
        $(-\Delta)^su=u^{2_s^*-1}$ & &in $\Omega\subset \mathbb{R}^{N}$, \\
        $u>0$& & in $\Omega$,\\
        $B(u)=0$  & &on $\partial\Omega=\Sigma_{\mathcal{D}}\cup\Sigma_{\mathcal{N}}$, \\
        \end{tabular}
        \right.
\end{equation}
provided $\alpha=\mathcal{H}_{N-1}(\Sigma_{\mathcal{D}})$ is small enough, in contrast to the non-existence results for the Dirichlet boundary data case and
$\Omega$ a star-shaped domain, see Pohozaev \cite{Po}, in the classical setting or \cite{BrCdPS} for the fractional case under the same
geometrical hypotheses. Nevertheless, and in spite of Theorem \ref{th_existencia}-$(3)$,
proceeding in a similar way as in \cite{LPT,G} we are going to show a Pohozaev-type identity for our fractional mixed Dirichlet-Neumann problems that provides us a
non-existence result under appropriate assumptions on the geometry of $\Omega,\:\Sigma_\mathcal{D},\:\Sigma_\mathcal{N}$.

Let us consider the problem
\begin{equation}
\left\{
\begin{tabular}{lcl}
$(-\Delta)^su=f(u)$ & &in $\Omega$, \\
$u>0$& & in $\Omega$,\\
$B(u)=0$  & &on $\partial\Omega=\Sigma_{\mathcal{D}}\cup\Sigma_{\mathcal{N}}$. \\
\end{tabular}
\right.
\tag{$P_f$}
\end{equation}
We have the following result.
\begin{theorem}\label{th:7}
Suppose that $u$ is a solution of problem $(P_f)$, $w=E_{s}[u]$ and $f$ is a continuous function with primitive $F$.
Then the following Pohozaev-type identity holds,
\begin{equation}\label{eq:poho}
\begin{array}{ll}
& \displaystyle(N-2s)\int_{\Omega}uf(u)dx-2N\int_{\Omega}F(u)dx\\
&\\ & =\displaystyle
\kappa_s\int_{\Sigma_{\mathcal{N}}^*}y^{1-2s}|\nabla w|^2\langle
x,\nu\rangle d\sigma(x,y)
-\kappa_s\int_{\Sigma_{\mathcal{D}}^*}y^{1-2s}|\nabla
w|^2\langle x,\nu\rangle d\sigma(x,y)\\
&\\ &\displaystyle\quad-2\int_{\Sigma_{N}}F(u)\langle x,\nu\rangle
d\sigma(x),
\end{array}
\end{equation}
where $\nu$ denotes the outwards normal vector to $\partial\Omega$.
\end{theorem}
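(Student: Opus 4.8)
The plan is to work with the extension $w = E_s[u] \in \mathcal{X}^s_{\Sigma_\mathcal{D}}(\mathcal{C}_\Omega)$, which solves the degenerate-elliptic equation $-\mathrm{div}(y^{1-2s}\nabla w) = 0$ in $\mathcal{C}_\Omega$ with $\partial w/\partial\nu^s = f(u)$ on $\Omega\times\{y=0\}$ and mixed boundary data $B^*(w)=0$ on $\partial_L\mathcal{C}_\Omega$. Following the Rellich–Pohozaev scheme of \cite{LPT,G}, I would test the extension equation against the vector field $Z = (x, \beta y)$ for a suitable constant $\beta$ (the natural anisotropic dilation for the weight $y^{1-2s}$ forces $\beta = 1$ here, but I would keep it symbolic until the bulk terms dictate its value), i.e. multiply by $\langle Z, \nabla w\rangle$ and integrate over $\mathcal{C}_\Omega$. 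The left-hand side, after integrating by parts, produces only boundary integrals over $\partial\mathcal{C}_\Omega = \big(\Omega\times\{y=0\}\big)\cup\Sigma_\mathcal{D}^*\cup\Sigma_\mathcal{N}^*$, because $-\mathrm{div}(y^{1-2s}\nabla w)=0$ in the interior and the divergence-of-the-field correction term $y^{1-2s}|\nabla w|^2\,\mathrm{div}\,Z$ combines with the $y$-derivative of the weight to cancel in the bulk for the correct choice of $\beta$.

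The key computation is the boundary bookkeeping. On $\Sigma_\mathcal{D}^*$ we have $w=0$, so $\nabla w$ is purely normal there and $\langle Z,\nabla w\rangle\langle\nabla w,\nu\rangle - \tfrac12|\nabla w|^2\langle Z,\nu\rangle$ collapses to $-\tfrac12|\nabla w|^2\langle x,\nu\rangle$ (the $y$-component of $\nu$ vanishes on the lateral boundary); this yields the $-\kappa_s\int_{\Sigma_\mathcal{D}^*}y^{1-2s}|\nabla w|^2\langle x,\nu\rangle$ term after multiplying through by $\kappa_s$. On $\Sigma_\mathcal{N}^*$ we have $\partial w/\partial\nu=0$, so $\nabla w$ is tangential and the same Rellich identity leaves $+\tfrac12|\nabla w|^2\langle x,\nu\rangle$, giving the $+\kappa_s\int_{\Sigma_\mathcal{N}^*}$ term. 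On the base $\Omega\times\{y=0\}$, the weight $y^{1-2s}$ times normal derivatives produces exactly the conormal term $\partial w/\partial\nu^s = f(u)$ (up to the constant $\kappa_s$), and collecting $\int_\Omega f(u)\langle x,\nabla u\rangle\,dx$ together with the scaling contribution from $\langle Z,\nabla w\rangle$ at $y=0$ — integrating $\int_\Omega \langle x,\nabla F(u)\rangle\,dx = -N\int_\Omega F(u)\,dx + \int_{\partial\Omega}F(u)\langle x,\nu\rangle\,d\sigma$ by the divergence theorem, and noting $F(u)=0$ on $\Sigma_\mathcal{D}$ since $u=0$ there, so only the $\Sigma_\mathcal{N}$ part survives — produces the terms $(N-2s)\int_\Omega uf(u) - 2N\int_\Omega F(u)$ on the left and $-2\int_{\Sigma_\mathcal{N}}F(u)\langle x,\nu\rangle$ on the right. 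The factor $N-2s$ rather than $N$ reflects the contribution of the $\beta y\,\partial_y w$ piece evaluated via the Caffarelli–Silvestre limit together with the homogeneity weight $1-2s$.

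The main obstacle I anticipate is \textbf{justifying the integrations by parts near the interface $\Gamma^* = \Gamma\times(0,\infty)$}, where the boundary condition switches from Dirichlet to Neumann and the solution of the extension problem is only of limited regularity; one must argue that the contribution of a tubular neighbourhood of $\Gamma^*$ vanishes in the limit, exactly as in \cite{LPT} for the classical case and \cite{ACP,ColP} for the weighted case. A secondary technical point is the behaviour as $y\to 0^+$: one needs enough regularity of $w$ up to $\{y=0\}$ (away from $\Gamma$) to pass to the limit $-\kappa_s\lim_{y\to0^+}y^{1-2s}\partial_y w = f(u)$ inside the integrals, and as $y\to\infty$ one needs decay of $w$ and $y^{1-2s}|\nabla w|^2$ so that the ``cap at infinity'' of the cylinder contributes nothing; both follow from the fact that $u\in H^s_{\Sigma_\mathcal{D}}(\Omega)\cap L^\infty(\Omega)$ (bootstrap regularity for the subcritical-in-the-bulk extension problem) and the Poisson-kernel representation. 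Once these limiting arguments are in place the identity \eqref{eq:poho} follows by collecting the boundary terms computed above.
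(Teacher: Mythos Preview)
Your proposal is correct and follows essentially the same route as the paper: both multiply the extension equation by the Pohozaev multiplier $\langle(x,y),\nabla w\rangle$, integrate by parts over $\mathcal{C}_\Omega$, and then sort the lateral boundary contributions on $\Sigma_\mathcal{D}^*$ and $\Sigma_\mathcal{N}^*$ together with the base contribution on $\Omega\times\{0\}$ (where $\int_\Omega\langle x,\nabla F(u)\rangle\,dx$ is integrated by parts once more to produce the $-N\!\int F(u)$ and the $\int_{\Sigma_\mathcal{N}}F(u)\langle x,\nu\rangle$ terms). The paper's argument is in fact a compressed, purely formal version of exactly your computation and does not address the regularity near $\Gamma^*$ or the decay as $y\to\infty$ that you flag; one cosmetic slip in your write-up is that the boundary combination arising from the integration by parts is $-\langle Z,\nabla w\rangle\langle\nabla w,\nu\rangle+\tfrac12|\nabla w|^2\langle Z,\nu\rangle$ (the negative of what you wrote), though your stated signs for the Dirichlet and Neumann pieces are the correct ones.
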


\begin{proof}
Since $w=E_{s}[u]$ is a solution of problem
\begin{equation}
        \left\{
        \begin{tabular}{lcl}
        $-div(y^{1-2s}\nabla w)=0$ & &in $\mathcal{C}_{\Omega}$, \\
        $B^*(w)=0$  & &on $\partial_L\mathcal{C}_{\Omega}$, \\
                $\frac{\partial w}{\partial \nu^s}=f(u)$& & in $\Omega$,
        \end{tabular}
        \right.
                \tag{$P_f^*$}
\end{equation}
multiplying the equation of $(P_f^*)$ by $\varphi(x,y)$ and integrating by parts we get
\begin{equation}
\kappa_s\int_{\mathcal{C}_{\Omega}}y^{1-2s}\nabla w\nabla\varphi
dxdy=\int_{\Omega}\varphi(x,0)f(u)dx+\kappa_s\int_{\Sigma_{\mathcal{D}}^*}\varphi
y^{1-2s}\langle\nabla w,\nu^*\rangle d\sigma(x,y).
\end{equation}
With $\nu^*$ the outwards normal vector to $\partial_L\mathcal{C}_{\Omega}$. We take $\varphi(x,y)=\langle (x,y),\nabla w \rangle$ and note that $\langle\nabla w,\nu^* \rangle=|\nabla w|$ on $\Sigma_{\mathcal{D}}^*$, as well that, by construction, the outwards normal vector $\nu^*$ to the lateral boundary $\partial_L\mathcal{C}_{\Omega}$ verifies $\nu^*=(\nu,0)$ with $\nu$ the outwards normal vector to $\partial\Omega$. Then, we find,
\begin{equation*}
\begin{split}
\frac{2s-N}{2}\kappa_s\int_{\mathcal{C}_{\Omega}}y^{1-2s}|\nabla w|^2dxdy+&\frac{1}{2}\kappa_s\int_{\partial_L\mathcal{C}_{\Omega}}y^{1-2s}
|\nabla w|^2\langle x,\nu\rangle d\sigma (x,y)=\\
\int_{\Sigma_{\mathcal{N}}}F(u)\langle x,\nu\rangle d\sigma(x)-&
N\int_{\Omega}F(u)dx+\kappa_s\int_{\Sigma_{\mathcal{D}}^*} y^{1-2s}
|\nabla w|^2\langle x,\nu\rangle d\sigma(x,y),
\end{split}
\end{equation*}
which proves \eqref{eq:poho}.
\end{proof}
As a consequence we obtain a non-existence result for problem $(P_f)$.
\begin{corollary}\label{th_no}
 Assume the hypotheses of Theorem \ref{th:7} and suppose there exists $x_0\in\Omega$ such that $\langle x-x_0,\nu\rangle=0$ on $\Sigma_{\mathcal{N}}$ and $\langle x-x_0,\nu\rangle>0$ on $\Sigma_{\mathcal{D}}$. If $f$ and $F$ satisfy the inequality
$(N-2s)tf(t)-2NF(t)\geq0$, then problem $(P_f)$ has no solution.
\end{corollary}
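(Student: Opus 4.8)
The plan is to invoke the Pohozaev-type identity of Theorem \ref{th:7}, but with the dilation centred at $x_0$ instead of at the origin. Since the equation $(-\Delta)^su=f(u)$ and the extended problem $(P_f^*)$ are invariant under translations in the $x$-variable, the computation in the proof of Theorem \ref{th:7} goes through unchanged if one uses the test function $\varphi(x,y)=\langle (x-x_0,y),\nabla w\rangle$ in place of $\langle (x,y),\nabla w\rangle$: the only places the base point enters are through $\operatorname{div}\!\big(y^{1-2s}(x-x_0,y)\big)=(N+2-2s)y^{1-2s}$, which is unaffected, and through the boundary integrands, where $\langle x,\nu\rangle$ becomes $\langle x-x_0,\nu\rangle$. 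This yields
\begin{equation*}
\begin{split}
(N-2s)\int_{\Omega}uf(u)\,dx-2N\int_{\Omega}F(u)\,dx
&=\kappa_s\int_{\Sigma_{\mathcal{N}}^*}y^{1-2s}|\nabla w|^2\langle x-x_0,\nu\rangle\,d\sigma(x,y)\\
&\quad-\kappa_s\int_{\Sigma_{\mathcal{D}}^*}y^{1-2s}|\nabla w|^2\langle x-x_0,\nu\rangle\,d\sigma(x,y)\\
&\quad-2\int_{\Sigma_{\mathcal{N}}}F(u)\langle x-x_0,\nu\rangle\,d\sigma(x).
\end{split}
\end{equation*}

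Next I would read off the sign of each term. The geometric hypothesis $\langle x-x_0,\nu\rangle=0$ on $\Sigma_{\mathcal{N}}$ (hence on $\Sigma_{\mathcal{N}}^*$, whose normal is $(\nu,0)$) kills the first and third terms on the right-hand side. The assumption $(N-2s)tf(t)-2NF(t)\ge 0$ together with $u>0$ in $\Omega$ makes the left-hand side $\ge 0$. Finally $\langle x-x_0,\nu\rangle>0$ on $\Sigma_{\mathcal{D}}$ and $y^{1-2s}|\nabla w|^2\ge 0$, so the surviving term $-\kappa_s\int_{\Sigma_{\mathcal{D}}^*}y^{1-2s}|\nabla w|^2\langle x-x_0,\nu\rangle\,d\sigma$ is $\le 0$. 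Hence both sides vanish, and since the integrand $y^{1-2s}|\nabla w|^2\langle x-x_0,\nu\rangle$ is nonnegative on $\Sigma_{\mathcal{D}}^*$ with $\langle x-x_0,\nu\rangle$ \emph{strictly} positive there, I conclude that $\nabla w=0$ $\sigma$-a.e.\ on $\Sigma_{\mathcal{D}}^*$.

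To finish I would reach a contradiction from the fact that $w$ vanishes on $\Sigma_{\mathcal{D}}^*$ and has vanishing gradient there. Since $u=\mathrm{Tr}[w]\ge 0$ with $u\not\equiv 0$ and $w$ solves $-\operatorname{div}(y^{1-2s}\nabla w)=0$ with $w=0$ on $\Sigma_{\mathcal{D}}^*$, $\partial w/\partial\nu=0$ on $\Sigma_{\mathcal{N}}^*$ and $w\to 0$ as $y\to\infty$, the strong maximum principle gives $w>0$ throughout $\mathcal{C}_{\Omega}$. On the other hand, at points of $\Sigma_{\mathcal{D}}^*$ lying away from $\{y=0\}$ and away from the interface $\Gamma\times(0,\infty)$ — a subset of $\Sigma_{\mathcal{D}}^*$ of positive surface measure — the weight $y^{1-2s}$ is smooth and bounded between two positive constants, so the operator is uniformly elliptic and $\mathcal{C}_{\Omega}$ satisfies an interior ball condition; Hopf's boundary point lemma then forces $\partial w/\partial\nu<0$, hence $|\nabla w|>0$, at every such point, contradicting $\nabla w=0$ $\sigma$-a.e.\ on $\Sigma_{\mathcal{D}}^*$. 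Therefore $(P_f)$ has no solution.

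The main obstacle is this last step: one must justify enough boundary regularity of $w$ near $\Sigma_{\mathcal{D}}^*$ to make sense of $|\nabla w|^2$ pointwise there — which follows from standard interior/boundary elliptic regularity for the nondegenerate operator away from $\{y=0\}$, using the smoothness of $\Sigma_{\mathcal{D}}$ — and then to apply the boundary point lemma in this weighted setting. Alternatively, the same contradiction can be obtained by a unique-continuation argument: extend $w$ by zero across (a regular piece of) $\Sigma_{\mathcal{D}}^*$; the extension solves the equation and has vanishing Cauchy data there, forcing $w\equiv 0$, hence $u\equiv 0$, against $u>0$.
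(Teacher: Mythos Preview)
Your proof is correct. The paper states this corollary without an explicit proof, treating it as an immediate consequence of the Pohozaev identity of Theorem~\ref{th:7}; your argument --- translating the identity to center it at $x_0$, killing the Neumann terms via $\langle x-x_0,\nu\rangle=0$, comparing signs to force $\nabla w=0$ on $\Sigma_{\mathcal{D}}^*$, and closing with Hopf's lemma away from $\{y=0\}$ --- is precisely the standard route and supplies the details the paper omits.
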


This result highlights the difference between a mixed boundary condition problem
and a Dirichlet one as well as the relevance of the geometry of
$\Omega$ and the decomposition of $\partial\Omega$ into $\Sigma_{\mathcal{D}}$ and
$\Sigma_{\mathcal{N}}$ in the existence issues.\newline
As an example, let us consider the critical power
problem \eqref{prcrt} with $\Omega$ defined as follows. Given $A_{\alpha}$ a smooth submanifold of the unit sphere $\mathbb{S}^{N-1}$ such that $\mathcal{H}_{N-1}(A_{\alpha})=\alpha$, we set $\Omega=\{tx:x\in A_{\alpha},0<t<R\}$, $\Sigma_{\mathcal{D}}=\{x\in\overline{\Omega}: |x|=R\}$ and $\Sigma_{\mathcal{N}}=\partial{{\Omega}}\backslash\Sigma_{\mathcal{D}}$.

We consider a smooth perturbation $\widetilde{\Omega}$ where the
vertex $x_0=\overline{0}$ and the corners of $\Omega$ are regularized, such that
$|\widetilde{\Omega}\backslash\Omega|$ is small enough. Set
$\widetilde{\Sigma}_{\mathcal{D}}=\Sigma_{\mathcal{D}}$ and
$\widetilde{\Sigma}_{\mathcal{N}}=\partial{\widetilde{\Omega}}\backslash\widetilde{\Sigma}_{\mathcal{D}}$.
Then, $\langle x,\nu\rangle=0$ on
$\widetilde{\Sigma}_{\mathcal{N}}\backslash T_{\rho}$ and $\langle
x,\nu\rangle\neq0$ on
$\widetilde{\Sigma}_{\mathcal{N},\rho}=\widetilde{\Sigma}_{\mathcal{N}}\cap
T_{\rho}$ with $T_{\rho}=B_{\rho}(0)\cup\{x\in\mathbb{R}^N:R-\rho<|x|<R\}$ and some $\rho>0$ small enough, as well as $\langle x,\nu\rangle>0$ on $\widetilde{\Sigma}_{\mathcal{D}}$. Since we can approximate the cone
$\Omega$ arbitrarily by means of $\widetilde{\Omega}$ , we can let
$\rho$ be sufficiently small in order to obtain a contradiction with
the Pohozaev identity, namely
\begin{equation}\label{ewq}
\begin{array}{ll}
&\displaystyle
\frac{N-2s}{N}\int_{\widetilde{\Sigma}_{\mathcal{N},\rho}}|u|^{2^*_s}\langle
x,\nu\rangle
d\sigma\\
&\\
&\displaystyle
=\kappa_s\int_{\widetilde{\Sigma}_{\mathcal{N},\rho}^*}y^{1-2s}|\nabla
w|^2\langle x,\nu\rangle
d\sigma+R\kappa_s\int_{\widetilde{\Sigma}_{\mathcal{D}}^*}y^{1-2s}|\nabla
w|^2d\sigma.
\end{array}
\end{equation}
Thus, no solution to the problem \eqref{prcrt} exists on
$\widetilde{\Omega}$.

\begin{remark}
If we move the boundary conditions in the example above, letting
$\mathcal{H}_{N-1}(\Sigma_{\mathcal{D}})\to 0$, by means of
Theorem \ref{th_existencia}-$(3)$ we get the existence
of solution to problem \eqref{prcrt} on the perturbed cone
$\widetilde{\Omega}$. This is not in contradiction with the previous
arguments, because by this procedure, points that
belonged to the Dirichlet boundary part for which we had $\langle
x,\nu\rangle>0,$ start to contribute to the integral involving the
Neumann part of the boundary in \eqref{ewq}, and hence Theorem \ref{th_existencia}-$(3)$ and Corollary \ref{th_no} are agree.
\end{remark}

\

\noindent {\bf Acknowledgments}. The authors are partially supported
by the Ministry of Economy and Competitiveness of Spain and FEDER
under Research Project MTM2016-80618-P.

\end{document}